\newcommand{\NN}{\ensuremath{\mathbb{N}}}           
\newcommand{\PP}{\ensuremath{\mathbb{P}}}           
\renewcommand{\AA}{\ensuremath{\mathbb{A}}}           
\newcommand{\QQ}{\ensuremath{\mathbb{Q}}}           
\newcommand{\cA}{\ensuremath{\mathcal{A}}}
\newcommand{\cI}{\ensuremath{\mathcal{I}}}
\newcommand{\cJ}{\ensuremath{\mathcal{J}}}
\newcommand{\cL}{\ensuremath{\mathcal{L}}}
\newcommand{\cM}{\ensuremath{\mathcal{M}}}
\newcommand{\cO}{\ensuremath{\mathcal{O}}}
\newcommand{\cR}{\ensuremath{\mathcal{R}}}
\newcommand{\cS}{\ensuremath{\mathcal{S}}}
\newcommand{\SHom}{\ensuremath{\mathcal{H}om}}
\newcommand{\SL}{\operatorname{SL}}
\newcommand{\Proj}{\operatorname{Proj}}
\newcommand{\supp}{\operatorname{supp}}
\newcommand{\Hom}{\operatorname{Hom}}
\numberwithin{equation}{section}
\newtheorem{Theorem}{Theorem}[section]
\newtheorem{Definition}[Theorem]{Definition}
\newtheorem{Example}[Theorem]{Example}
\newtheorem{Lemma}[Theorem]{Lemma}
\newtheorem{Proposition}[Theorem]{Proposition}
\newtheorem{Corollary}[Theorem]{Corollary}
\newtheorem{Remark}[Theorem]{Remark}
\newcommand{\Bl}{\operatorname{Bl}}
\title{ Maximal compatible splitting and diagonals\\
  of Kempf varieties} \author{ Niels Lauritzen and Jesper Funch
  Thomsen }
\begin{document}
\maketitle

\begin{abstract} 
  Lakshmibai, Mehta and Parameswaran (LMP) introduced the notion of
  maximal multiplicity vanishing in Frobenius splitting.  In this
  paper we define the algebraic analogue of this concept and construct
  a Frobenius splitting vanishing with maximal multiplicity on the
  diagonal of the full flag variety. Our splitting induces a diagonal
  Frobenius splitting of maximal multiplicity for a special class of
  smooth Schubert varieties first considered by Kempf. Consequences
  are Frobenius splitting of tangent bundles, of blow-ups along the
  diagonal in flag varieties along with the LMP and Wahl conjectures
  in positive characteristic for the special linear group.
\end{abstract}

\section{Introduction}

In \cite{LakshmibaiMehtaParameswaran1998}, Lakshmibai, Mehta and
Parameswaran introduced the notion of multiplicities of Frobenius
splittings: if $X$ is a smooth projective algebraic variety over an
algebraically closed field $k$ of positive characteristic $p$, duality
for the Frobenius morphism identifies Frobenius splittings with
certain sections of the $(p-1)$-th power of the anticanonical line
bundle $\omega_X^{-1}$ on $X$.  If $Y\subseteq X$ is a compatibly
split smooth subvariety of codimension $d$ under the section $s$ of
$\omega_X^{1-p}$, then $s$ vanishes with multiplicity
$\leq (p-1) d$ on $Y$. The splitting $s$ is said to split $Y$
compatibly with maximal multiplicity if $s$ vanishes with
multiplicity $(p-1) d$ on $Y$ (cf.~\S \ref{fbups} of this paper for an
equivalent algebraic notion).  A Frobenius splitting vanishing
with maximal multiplicity on $Y$ lifts to a Frobenius
splitting of the blow-up $\Bl_Y(X)$ splitting the exceptional divisor
compatibly.

Let $X = G/P$, where $G$ is a semisimple linear algebraic group and
$P\subset G$ a parabolic subgroup.  In a beautiful geometric argument
Lakshmibai, Mehta and Parameswaran proved that a Frobenius splitting
of the blow-up $\Bl_\Delta(X\times X)$ compatibly splitting the
exceptional divisor implies Wahl's conjecture in positive
characteristic.  They conjectured the existence of a Frobenius
splitting of $X\times X$ vanishing  with maximal
multiplicity on the diagonal $\Delta$ (we refer to this as the LMP
conjecture, cf. \S2.4 in \cite{LakshmibaiMehtaParameswaran1998} and \S
2.C in \cite{BrionKumar2005}).

Wahl's conjecture predicts that the (generalized) Gaussian map
(cf.~\cite{Wahl1991})
\begin{equation}\label{Gauss}
H^0(X\times X, \cI_\Delta\otimes p_1^*\cL_1\otimes p_2^*\cL_2)\rightarrow
H^0(X, \Omega^1_X\otimes \cL_1\otimes \cL_2)
\end{equation}
is surjective for $\cL_1$
and $\cL_2$ ample line bundles on $X$. This conjecture was proved
by Kumar \cite{Kumar1992} for complex semisimple groups using detailed
information on the decomposition of tensor products. In positive
characteristic the conjecture has been proved for Grassmannians by
Mehta and Parameswaran \cite{MehtaParameswaran1997}, for symplectic
and orthogonal Grassmannians by Lakshmibai, Raghavan and Sankaran
\cite{lakshmibaietal2009} and by Brown and Lakshmibai for minuscule
$G/P$ \cite{BrownLakshmibai2008}. These positive characteristic
results were proved by verifying the LMP conjecture in the specific
cases. The LMP conjecture for $G/P$ is implied by the conjecture for
the full flag variety $G/B$ (cf.~Proposition \ref{Prop:pdm} of this paper).
Lakshmibai, Mehta and Parameswaran verified their conjecture
for $\SL_n/B$ and $n\leq 6$.

In this paper we prove the LMP conjecture for $\SL_n/P$ by explicitly
constructing a Frobenius splitting of $\SL_n/B\times \SL_n/B$
vanishing  with maximal multiplicity on the diagonal for
every $n\geq 2$. Our splitting compatibly splits $X\times X$, where
$X$ is a Kempf variety in $\SL_n/B$ (Kempf varieties are special smooth
Schubert varieties introduced by Kempf in \cite{Kempf1976}. See also
\S \ref{Kempf} in this paper for their definition and examples).

Our construction comes from observing in the $\SL_3$-case that the
product of the minors from the lower left hand corner in
$$
\begin{pmatrix}
x_{31} & 0 & x_{32} & 0 & x_{33} & 0\\
x_{21} & 0 & x_{22} & 0 & x_{23} & 0\\
x_{11} & 0 & x_{12} & 0 & x_{13} & 0\\
x_{11} & y_{11} & x_{12} & y_{12} & x_{13} & y_{13}\\
x_{21} & y_{21} & x_{22} & y_{22} & x_{23} & y_{23}\\
x_{31} & y_{31} & x_{32} & y_{32} & x_{33} & y_{33}
\end{pmatrix},
$$
where
$$
\begin{pmatrix}
x_{11} & x_{12} & x_{13}\\
x_{21} & x_{22} & x_{23}\\
x_{31} & x_{32} & x_{33}
\end{pmatrix},\,
\begin{pmatrix}
y_{11} & y_{12} & y_{13}\\
y_{21} & y_{22} & y_{23}\\
y_{31} & y_{32} & y_{33}
\end{pmatrix}\in
\SL_3
$$

\bigskip\noindent is a section of the anticanonical bundle on
$\SL_3/B\times \SL_3/B$ giving a Frobenius splitting vanishing
 with maximal multiplicity on the diagonal and compatibly
splitting $X\times X$, where $X$ is one of the five Kempf varieties in
$\SL_3/B$ (cf.~Example \ref{Idea} in this paper).

In the last part (\S \ref{WahlKempf}) of this paper, we enhance the
geometric arguments in \cite{LakshmibaiMehtaParameswaran1998} and show
that the Gaussian map \eqref{Gauss} is surjective, provided that
$\cL_1 = \cL\otimes \cM_1$ and $\cL_2=\cL\otimes \cM_2$, where $\cL$
is ample and $\cM_1, \cM_2$ globally generated line bundles on $X$ (a
projective smooth variety) and the diagonal $\Delta\subset X\times X$
is maximally compatibly split. Here we do not need the underlying
field to have odd characteristic (as in
\cite{LakshmibaiMehtaParameswaran1998}).  This enables us to prove
Wahl's conjecture also for Kempf varieties, since they posses unique
minimal ample line bundles as Schubert varieties in $G/B$. We do not
know, even over the complex numbers, if Wahl's conjecture holds for
smooth Schubert varieties.

We have found it very difficult to prove the LMP conjecture in a
general Lie theoretic context and hope this paper will add to the
inspiration for further research in this direction. We feel
nevertheless, that Frobenius splitting of tangent bundles (cf.  the
already known case of the cotangent bundle
\cite{KumarLauritzenThomsen1999}), diagonal Frobenius splitting of
Kempf varieties along with the LMP and Wahl conjecture for the special
linear group are of some interest.

We thank an anonymous referee for careful reading and pointing out
several sharpenings in our manuscript.

\newcommand{\diag}{\operatorname{diag}}

\section{Preliminaries}

A scheme will refer to a seperated scheme of finite type over an
algebraically closed field $k$ of characteristic $p>0$. A variety will
refer to a reduced scheme.

\subsection{The vanishing multiplicity on a smooth subvariety}
\label{mult-van}

Let $X$ be a smooth variety of dimension $n$, $\cL$ a line bundle on
$X$ and $Y\subset X$ a smooth subvariety of codimension $d$. Then the
blow-up $B=\Bl_Y(X)$ is a smooth variety and the exceptional divisor
$E\subset B$ a prime divisor. Let $s$ be a section of $\cL$. The
vanishing multiplicity of $s$ on $Y$ is defined as $v_E(\pi^* s)$ (in
the notation of \cite[II.6]{Hartshorne}), where $\pi:B\rightarrow X$
is the projection. Notice that the vanishing multiplicity of $s$ on
$Y$ can be computed locally on an open subset $U\subset X$ with $U\cap
Y\neq \emptyset$. Locally this definition is easy to handle: if $P\in
Y$, then there exists a regular system of parameters $x_1, \dots, x_n$
in $\cO_{X, P}$, such that $Y$ is defined by $I=(x_1, \dots,
x_d)$  \cite[VIII. Theorem 26]{ZariskiSamuel1975}. The vanishing multiplicity of $s$ is the maximal $m\geq 0$ with
$s_P\in I^m\cL$.

\subsection{Frobenius splitting}

We recall the crucial definitions and concepts on Frobenius splitting
from \cite{BrionKumar2005} with a few added generalizations on
Frobenius splitting of $\cO_X$-algebras along with the notion of
\textit{maximally compatibly split subschemes}. 

The \textit{absolute Frobenius morphism} on a scheme $X$ is the
morphism $F:X\rightarrow X$, which is the identity on point spaces and
the Frobenius homomorphism on the structure sheaf $\cO_X$. A
\textit{Frobenius splitting} of $X$ is an $\cO_X$-linear map
$\sigma:F_* \cO_X\rightarrow \cO_X$ splitting $F^\#:\cO_X\rightarrow
F_*\cO_X$. Another way of saying this, is that $\sigma$ is a group
homomorphism $\cO_X\rightarrow \cO_X$ satisfying
\begin{itemize}
\item $\sigma(f^p g) = f \sigma(g)$
\item $\sigma(1) = 1$
\end{itemize}
locally on open subsets.  A Frobenius split scheme has to be reduced.
A closed subscheme $Y\subset X$ is called \textit{compatibly split}
under a Frobenius splitting $\sigma$ if
$$
\sigma(F_* \cI_Y)\subset \cI_Y.
$$
The following very useful results follow (almost) from first
principles (cf.~\cite[Proposition 1.2.1 and Lemma
1.1.7]{BrionKumar2005}).
\begin{Proposition}\label{Prop:F}
Let $\sigma$ be a Frobenius splitting of a scheme $X$ and
let $Y$ and $Z$ be compatibly split subschemes of $X$ under $\sigma$.
\begin{enumerate}[(i)]
\item\label{Prop:Fi}
The irreducible components of $Y$ are compatibly split under $\sigma$.
\item\label{Prop:Fii}
The scheme theoretic intersection $Y\cap Z$ given by $\cI_Z+\cI_Y$ is 
compatibly split under $\sigma$.
\item
The scheme theoretic union $Y\cup Z$ given by $\cI_Z\cap\cI_Y$ is 
compatibly split under $\sigma$.
\item\label{Prop:Fiv}
  If $U$ is a dense open subscheme of a reduced scheme $X$, and if
  $$\sigma\in \Hom_{\cO_X}(F_*\cO_X, \cO_X)$$ restricts to a splitting
  of $U$, then $\sigma$ is a splitting of $X$. If, in addition, $Y$ is
  a reduced closed subscheme of $X$ such that $U\cap Y$ is dense in
  $Y$ and compatibly split by $\sigma|_U$, then $Y$ is compatibly
  split by $\sigma$.
\end{enumerate}
\end{Proposition}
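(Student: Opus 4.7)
The plan is to reduce everything to affine opens $\Spec A \subset X$, where a Frobenius splitting becomes an additive map $\sigma: A \to A$ with $\sigma(a^p b) = a\sigma(b)$ and $\sigma(1)=1$, and a compatibly split closed subscheme corresponds to an ideal $I \subset A$ with $\sigma(I) \subseteq I$. In this local language, parts (ii) and (iii) are immediate from additivity of $\sigma$ on the ambient ring: one has $\sigma(I+J) = \sigma(I)+\sigma(J) \subseteq I+J$, and $\sigma(I \cap J) \subseteq \sigma(I) \cap \sigma(J) \subseteq I \cap J$. The $\cO_X$-linearity of $\sigma$ ensures these sheaf-theoretic statements agree on overlaps.

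For (i), the key observation is that $\cI_Y$ is already a radical ideal: if $f^p \in \cI_Y$, then $f = \sigma(f^p \cdot 1) = \sigma(f^p) \in \sigma(\cI_Y) \subseteq \cI_Y$. Working in an affine chart, this would let me write $\cI_Y = P_1 \cap \cdots \cap P_r$ as the intersection of its minimal primes. To show each $P_i$ is $\sigma$-stable, I would invoke prime avoidance to pick $g \in \bigcap_{j \neq i} P_j$ with $g \notin P_i$; then for any $f \in P_i$, the product $g^p f$ lies in every $P_j$, hence in $\cI_Y$, so $\sigma(g^p f) = g\, \sigma(f) \in \cI_Y \subseteq P_i$, and primality together with $g \notin P_i$ forces $\sigma(f) \in P_i$. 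Globalizing, I would use that the trace of a global irreducible component of $Y$ on each affine chart is cut out by a minimal prime, so the local $\sigma$-stability assembles into compatible splitting of each $\cI_{Y_i}$.

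For (iv), both extensions follow from a single density principle. The splitting condition amounts to vanishing of the additive map $\sigma \circ F^\# - \mathrm{id}_{\cO_X}: \cO_X \to \cO_X$, and the compatibility condition amounts to vanishing of the induced composite $F_*\cI_Y \to F_*\cO_X \to \cO_X \to \cO_Y$. Since $X$ (respectively $Y$) is reduced and $U$ (respectively $U \cap Y$) is dense, any section of $\cO_X$ (respectively $\cO_Y$) that vanishes on this dense open must vanish globally, as its zero locus is closed and contains a dense subset of a reduced scheme.

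I expect the main obstacle to be (i), because $\sigma$ is not $\cO_X$-linear and the minimal primes are an inherently local object. The argument above circumvents this by showing $\sigma$-stability of each minimal prime in a fixed affine chart and then invoking the locality of the $\sigma$-stability condition together with the standard correspondence between local minimal primes and global irreducible components; the remaining parts (ii), (iii), (iv) are essentially formal once the local reformulation is in place.
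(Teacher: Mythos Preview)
The paper does not actually prove this proposition; it simply records the statement and cites \cite[Proposition 1.2.1 and Lemma 1.1.7]{BrionKumar2005}. Your argument is correct and is essentially the standard one found in that reference: the additivity of $\sigma$ handles (ii) and (iii) immediately; for (i) you first observe that a compatibly split ideal is radical (via $\sigma(f^p)=f$) and then separate the minimal primes using an element $g\in\bigcap_{j\neq i}P_j\setminus P_i$; and (iv) follows from the fact that a section of the structure sheaf of a reduced scheme vanishing on a dense open vanishes everywhere. One minor terminological quibble: the existence of such a $g$ in (i) is not really prime avoidance but the elementary fact that no minimal prime contains the intersection of the others (else some $P_j\subseteq P_i$, contradicting minimality).
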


\subsection{Frobenius splitting of $\cO_X$-algebras}\label{fbups}

The Frobenius homomorphism makes perfect sense for a sheaf $\cA$ of
$\cO_X$-algebras, where $X$ is a scheme. In analogy with the classical
definition we define $\cA$ to be Frobenius split if there exists a
homomorphism
$$
\sigma: F_* \cA\rightarrow \cA
$$
of $\cA$-modules splitting the Frobenius homomorphism $\cA\rightarrow
\cA$. Similarly we call a sheaf of ideals $\cJ$ in $\cA$ compatibly 
split under $\sigma$ if $\sigma(F_* \cJ)\subset \cJ$.

We let 
\begin{align*}
\cR(\cI) &= \bigoplus_{m\geq 0} \cI^m t^m = \cO_X[\cI t]\\
&=\{a_0+a_1 t + \cdots + a_n t^n \mid a_j \in \cI^j\}\subset \cO_X[t]
\end{align*} 
denote the \textit{Rees algebra} corresponding to a sheaf of ideals
$\cI\subset \cO_X$. The sheaf of ideals $\cI \cR(\cI)$ is called
the \textit{exceptional ideal}.

A Frobenius splitting $\sigma:F_*\cO_X\rightarrow \cO_X$ can always be
extended to the Frobenius splitting
$
\sigma[t]:F_* \cO_X[t]\rightarrow \cO_X[t]
$
given by 
$$
\sigma[t](a_0 + a_1 t + \cdots) := \sigma(a_0) + \sigma(a_p) t +
\sigma(a_{2p}) t^2 + \cdots
$$
\begin{Definition}
  Let $\sigma:F_* \cO_X\rightarrow \cO_X$ be a Frobenius splitting of
  $X$. A closed subscheme $Y\subset X$ is called maximally compatibly split
  under $\sigma$ if
$$
\sigma(\cI^{n p +1})\subset \cI^{n+1}
$$
for every $n\geq 0$, where $\cI$ is the ideal sheaf defining $Y$.
\end{Definition}

Notice that a maximally compatibly split scheme is compatibly split
and that $\sigma(\cI^{np}) \subset \cI^n$ for $n\geq 0$.
The following result can be checked explicitly by reducing to the
affine case.

\begin{Proposition}\label{Propmaxsplitalg}
  Let $Y\subset X$ be a maximally compatibly split closed subscheme
  under a Frobenius splitting $\sigma:F_* \cO_X\rightarrow \cO_X$ and
  $\cI$ the ideal sheaf defining $Y$.
\begin{enumerate}[(i)]

\item
  Then
  $\sigma[t]$ restricts to a Frobenius splitting of the Rees algebra
  $\cR(\cI)$ compatibly splitting the exceptional ideal $\cI
  \cR(\cI)$.
\item If furthermore $Z$ is a compatibly split closed subscheme under
  $\sigma$, then the induced splitting on $Z$ splits $Y\cap Z$
  maximally, where $Y\cap Z$ denotes the scheme theoretic intersection.
\end{enumerate}
\end{Proposition}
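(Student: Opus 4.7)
The plan is to work locally on an affine open $U = \Spec A$ on which $\cI$ corresponds to an ideal $I \subset A$; since the Rees algebra construction, the ideal sheaf conditions, and the extended splitting $\sigma[t]$ are all defined componentwise at the level of graded pieces, the sheaf-theoretic statements follow once the affine case is verified. By hypothesis we then have a splitting $\sigma: A \to A$ with $\sigma(I^{np+1}) \subset I^{n+1}$ for all $n \geq 0$; note in particular that for $k \geq 1$ one has $I^{pk} \subset I^{(k-1)p+1}$, so $\sigma(I^{pk}) \subset I^k$ as a formal consequence.

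For part (i), an element of $R(I)$ is a finite sum $f = \sum_k a_k t^k$ with $a_k \in I^k$, and $\sigma[t](f) = \sum_k \sigma(a_{pk}) t^k$. Preservation of $R(I)$ is immediate: $a_{pk} \in I^{pk}$ gives $\sigma(a_{pk}) \in I^k$ by the observation above, which is the required degree. To check that $\sigma[t]$ actually splits Frobenius on $R(I)$, I would use that in characteristic $p$ one has $f^p = \sum_k a_k^p t^{pk}$, hence
$$
\sigma[t](f^p) = \sum_k \sigma(a_k^p) t^k = \sum_k a_k t^k = f,
$$
since $\sigma$ splits Frobenius on $A$. For the exceptional ideal $I \cdot R(I) = \bigoplus_m I^{m+1} t^m$, the same computation goes through with the sharper input $a_{pk} \in I^{pk+1}$, where now the maximal splitting hypothesis with $n = k$ gives $\sigma(a_{pk}) \in I^{k+1}$; this is exactly the step that requires the ``$+1$'' in the definition of maximal compatible splitting.

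For part (ii), let $J$ be the ideal defining $Z$ in $A$. The induced splitting $\bar\sigma : A/J \to A/J$ sends $\bar a$ to $\overline{\sigma(a)}$, which is well-defined since $\sigma(J) \subset J$. The ideal of $Y \cap Z$ inside $Z$ is $\bar I = (I + J)/J$, and one has $\bar I^m = (I^m + J)/J$. Given a class in $\bar I^{np+1}$, lift it to $x = y + z$ with $y \in I^{np+1}$ and $z \in J$; then $\sigma(x) = \sigma(y) + \sigma(z) \in I^{n+1} + J$, so $\bar\sigma$ sends $\bar I^{np+1}$ into $\bar I^{n+1}$, which is maximal compatible splitting of $Y \cap Z$ in $Z$.

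The whole argument is essentially index bookkeeping once the local setup is in place, so there is no substantive obstacle. The only point worth emphasising is that both halves of the maximal splitting condition are actively used: the consequence $\sigma(I^{pk}) \subset I^k$ to keep $\sigma[t]$ inside $R(I)$, and the stronger $\sigma(I^{pk+1}) \subset I^{k+1}$ to compatibly preserve the exceptional ideal. Without the ``$+1$'' shift in the definition the lift to the blow-up would be lost.
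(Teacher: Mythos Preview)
Your proof is correct and follows exactly the approach the paper indicates: reduce to the affine case and verify the conditions by direct computation on graded pieces (the paper itself does not spell out the argument, merely stating that it ``can be checked explicitly by reducing to the affine case''). One small remark: once you know $\sigma[t]$ is a Frobenius splitting of $\cO_X[t]$, the $p$-linearity and $\sigma[t](1)=1$ are inherited automatically by any subalgebra it preserves, so your separate computation of $\sigma[t](f^p)=f$ is harmless but not strictly needed; the essential content is precisely your verification that $\sigma[t]$ maps $\cR(\cI)$ into itself and $\cI\cR(\cI)$ into itself.
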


The \textit{blow-up} of a scheme $X$ along a closed subscheme $Y$
given by the ideal sheaf $\cI$ is defined as $\Bl_Y(X) := \Proj
\cR(\cI)$. The exceptional ideal identifies with the inverse image
ideal sheaf $\pi^{-1}(\cI)$, under the canonical morphism
$\pi:\Bl_Y(X)\rightarrow X$. It is an invertible sheaf defining the
\textit{exceptional divisor} of $\pi$. In this setting we will prove
the following analogue of Proposition \ref{Propmaxsplitalg}.

\begin{Proposition}\label{xyz}
  Let $Y\subset X$ be a maximally compatibly split closed subscheme
  under a Frobenius splitting $\sigma:F_* \cO_X\rightarrow \cO_X$.
\begin{enumerate}[(i)]

\item 
Then $\sigma$ extends to a Frobenius splitting of the blow-up
  $\Bl_Y(X)$ compatibly splitting the exceptional divisor.
\item If the closed subscheme $Z$ is compatibly split under $\sigma$,
  then the induced splitting on $Z$ extends to a Frobenius splitting
  of $\Bl_{Y\cap Z}(Z)$ splitting the exceptional divisor compatibly,
  where $Y\cap Z$ denotes the scheme theoretic intersection.
\end{enumerate}
\end{Proposition}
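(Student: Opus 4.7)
The plan is to invoke Proposition \ref{Propmaxsplitalg}(i), which produces a Frobenius splitting $\sigma[t]\colon F_*\cR(\cI)\to\cR(\cI)$ compatibly splitting the exceptional ideal $\cI\cR(\cI)$, and to descend this to $\Bl_Y(X)=\Proj_X\cR(\cI)$. The main obstacle is the bookkeeping around the $\Proj$ construction; the Frobenius-splitting identity and compatibility with the exceptional divisor on each affine chart are nearly immediate from the two inclusions $\sigma(\cI^{np})\subset\cI^n$ and $\sigma(\cI^{np+1})\subset\cI^{n+1}$ provided by maximal compatibility.

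I would work locally. On an affine open $U=\Spec A\subset X$ on which $\cI$ corresponds to $I\subset A$, the blow-up is covered by the standard affine charts $D_+(ft)=\Spec A_{f,I}$ for $f$ ranging over generators of $I$, where
$$A_{f,I} := \bigcup_{n\geq 0} I^n f^{-n} \subset A_f,$$
and the exceptional divisor is cut out by $f\in A_{f,I}$. Using the standard extension of a Frobenius splitting to a localization, I would define
$$\sigma_f(g/f^n) := \sigma\bigl(g f^{(p-1)n}\bigr)/f^n,$$
yielding a Frobenius splitting of $A_f$. I then verify this restricts to a splitting of $A_{f,I}$ preserving the ideal $(f)$: if $g\in I^n$, then $gf^{(p-1)n}\in I^{np}$ (since $f\in I$), so $\sigma(gf^{(p-1)n})\in I^n$ by maximal compatibility, giving $\sigma_f(g/f^n)\in I^n/f^n\subset A_{f,I}$; if moreover $g\in I^{n+1}$, then $gf^{(p-1)n}\in I^{np+1}$, whence $\sigma(gf^{(p-1)n})\in I^{n+1}$ and $\sigma_f(g/f^n)\in fA_{f,I}$.

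Next I would patch these local splittings together. On an overlap $D_+(ft)\cap D_+(f't)=D_+(ff't^2)$, both $\sigma_f$ and $\sigma_{f'}$ arise from the same recipe applied to $\sigma[t]$ after inverting $ft$ and $f't$, hence agree there; and since $\sigma[t]$ is defined globally on $X$, the construction is compatible with the choice of affine open $U\subset X$. The resulting global Frobenius splitting of $\Bl_Y(X)$ compatibly splits the exceptional divisor, establishing (i).

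Part (ii) then follows immediately by combining (i) with Proposition \ref{Propmaxsplitalg}(ii): the latter says that the splitting induced by $\sigma$ on $Z$ splits $Y\cap Z\subset Z$ maximally, so applying (i) with $Z$ in place of $X$ and $Y\cap Z$ in place of $Y$ produces the desired Frobenius splitting of $\Bl_{Y\cap Z}(Z)$ compatibly splitting the exceptional divisor.
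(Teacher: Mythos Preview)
Your proposal is correct and follows essentially the same strategy as the paper: invoke Proposition~\ref{Propmaxsplitalg} to obtain a splitting of the Rees algebra compatible with the exceptional ideal, then descend to $\Proj$. The only difference is that the paper packages the descent step into a general result (Corollary~\ref{Corollary:homogsplit}) about graded Frobenius splittings of $\cO_X$-algebras passing to $\Proj$, whereas you unwind that same construction directly on the standard affine charts $D_+(ft)$; your formula $\sigma_f(g/f^n)=\sigma(gf^{(p-1)n})/f^n$ is precisely what the paper's $\widetilde{\sigma[t]_0}$ computes on $\cR(\cI)_{(ft)}$, so the gluing is automatic.
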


Proposition \ref{xyz} is a consequence of the next subsection, where we give the
necessary details for turning a Frobenius splitting of a homogeneous
$\cO_X$-algebra $\cA$ into a Frobenius splitting of the scheme $\Proj
\cA$.

\subsection{Graded Frobenius splittings}

For a commutative ring $R$ of characteristic $p > 0$ and an $R$-module
$M$ with scalar multiplication $(r,m)\mapsto rm$, we let $F_* M$
denote the $R$-module coinciding with $M$ as an abelian group but with
scalar multiplication $(r, m)\mapsto r^p m$.

Let $ S = S_0\oplus S_1 \oplus \cdots $ be a graded noetherian ring of
characteristic $p$, such that $F_* S$ is a finitely generated
$S$-module. If $M = M_0 \oplus M_1 \oplus \cdots$ is a graded
$S$-module, then we have a direct sum decomposition of $F_* M$ into
graded $S$-modules
$$
F_* M = F_*M^{(0)} \oplus \cdots \oplus F_*M^{(p-1)},
$$
where
$$
F_*M^{(j)}=\bigoplus_{i\equiv j\,\, (\text{mod}\, p)} M_i,
$$
for $j = 0, \dots, p-1$. An element $m\in M_{n p +
  j}\subset F_* M^{(j)}$ has degree $n$.

\begin{Lemma}\label{Lemma:tildeF}
Let $X=\Proj(S)$ and $F:X\rightarrow X$ be the absolute Frobenius
morphism on $X$. Then there is a canonical isomorphism
$$
\widetilde{F_* M^{(0)}} \cong F_* \widetilde{M}.
$$
\end{Lemma}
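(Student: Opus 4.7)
My plan is to construct the isomorphism locally on the standard affine opens $D_+(f) \subset \Proj(S)$, where $f \in S_+$ is homogeneous of positive degree $d$, and then check that the local maps are compatible with restrictions so that they glue. Since $\widetilde{N}(D_+(f)) = N_{(f)}$ for every graded $S$-module $N$, and since the absolute Frobenius is the identity on the underlying topological space, the task reduces to an explicit identification
$$
(F_*M^{(0)})_{(f)} \;\cong\; F_*\bigl(M_{(f)}\bigr),
$$
where the right-hand side denotes $M_{(f)}$ equipped with the $S_{(f)}$-module structure twisted by $s \star n = s^p n$.

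Unpacking the grading convention in the statement, an element $m \in M_{kpd}$ has degree $k$ in $F_*M^{(0)}$ and $f$ acts on it by its $p$-th power, so $(F_*M^{(0)})_{(f)}$ consists precisely of fractions $m/f^k$ with $m \in M_{kpd}$. I would then define
$$
\phi_f : (F_*M^{(0)})_{(f)} \longrightarrow F_*\bigl(M_{(f)}\bigr), \qquad m/f^k \longmapsto m/f^{kp},
$$
noting first that the target fraction has numerator and denominator of the same $S$-degree $kpd$, hence lies in $M_{(f)}$. The routine verifications are: (i) independence of representative and injectivity both reduce to the observation that a relation $f^N m = 0$ in $M$ is equivalent to $f^{Np} m = 0$, so two fractions coincide in one localization if and only if they coincide in the other; (ii) surjectivity, since any $n/f^j \in M_{(f)}$ with $n \in M_{jd}$ equals $\phi_f\bigl((f^{(p-1)j} n)/f^j\bigr)$, the numerator $f^{(p-1)j}n$ lying in $M_{jpd}$; and (iii) $S_{(f)}$-linearity, where the twisted actions on both sides match via
$$
\phi_f\bigl((s/f^j)\cdot(m/f^k)\bigr) \;=\; (s^p m)/f^{(j+k)p} \;=\; (s/f^j)\star \phi_f(m/f^k).
$$

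Finally I would observe that the maps $\phi_f$ are manifestly compatible with the restrictions to $D_+(fg) \subset D_+(f)$, because the formula $m/f^k \mapsto m/f^{kp}$ is preserved under the canonical inclusions of the relevant localizations. Hence the $\phi_f$ glue to the desired canonical isomorphism of quasi-coherent $\cO_X$-modules $\widetilde{F_*M^{(0)}} \cong F_*\widetilde M$. The only real obstacle in the argument is keeping careful track of the two distinct gradings together with the Frobenius twist on the scalar action; beyond that, nothing more than the standard correspondence between graded modules and quasi-coherent sheaves on $\Proj$ is required.
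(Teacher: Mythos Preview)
Your proof is correct and follows essentially the same approach as the paper: the paper also defines the local map $\varphi_f(m/f^n)=m/f^{np}$ on each $D_+(f)$ and notes that these patch to a global isomorphism. You have simply supplied the routine verifications (well-definedness, bijectivity, $S_{(f)}$-linearity, and compatibility under restriction) that the paper leaves implicit.
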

\begin{proof}
Let $f\in S$ be a homogeneous element. Then 
$$
\varphi_f\left(\frac{m}{f^n}\right) = \frac{m}{f^{n p}}
$$ defines a local isomorphism $(F_*
M^{(0)})_{(f)} \rightarrow F_* (M_{(f)})$ on $D_+(f)$. The isomorphisms
$\varphi_f$ patch up to give the desired global isomorphism.
\end{proof}

\begin{Example}
  Suppose that $S = k[x_0, x_1,\dots, x_n]$, where $k$ is a field of
  characteristic $p$. Then there is an isomorphism
$$
F_* S^{(0)} \cong S\oplus S(-1)^{\ell_1} \oplus \cdots \oplus
S(-n)^{\ell_n},
$$ of graded $S$-modules for certain $\ell_1, \dots, \ell_n\in \NN$. Lemma
\ref{Lemma:tildeF}
shows that 
$$ F_* \cO_X \cong \cO_X\oplus
\cO_X(-1)^{\ell_1}\oplus\cdots\oplus\cO_X(-n)^{\ell_n}
$$ for $X = \PP^n_k = \Proj(S)$. In particular, it follows that
$\PP^n_k$ is Frobenius split. Building a monomial basis for
$F_*S^{(0)}$ in degrees $0, p, 2p, \dots, n p$ we also have the following
recursive formula for $\ell_j$:
$$
\ell_j = \binom{j p + n}{n} - \sum_{i=1}^j \binom{i + n}{n}\ell_{j-i},
$$
where $j = 0, \dots, n$.
The fact that $F_*\cO_{\PP^n}(m)$ splits into a direct sum of line
bundles is a classical result due to Hartshorne (cf.~\cite[\S
6]{Hartshorne1970}).
\end{Example}

\subsubsection{Frobenius splitting of $\Proj(S)$}

For $\sigma\in \Hom_S(F_*S, S)$, we let
$$
\sigma_0\in \Hom_S(F_* S^{(0)}, S)_0
\subset \Hom_S(F_* S^{(0)}, S)
$$ 
denote the degree $0$ component of $\sigma$ restricted to $F_*
S^{(0)}$. Then $\sigma_0: F_* S^{(0)}\rightarrow S$ is a homomorphism
of graded $S$-modules.  We may view $\sigma_0\in \Hom_S(F_* S, S)$
satisfying $\sigma_0(S_{n p})\subset S_n$ and $\sigma_0(S_m) = 0$ if
$p\nmid m$.

\begin{Lemma}\label{Lemma:homogsplit}
  Suppose $\sigma\in \Hom_S(F_* S, S)$, where $S = S_0\oplus S_1\oplus
  \cdots$ is a graded ring. Then $\sigma_0$ is a Frobenius splitting if
  $\sigma$ is a Frobenius splitting.  If $I\subset S$ is
  a homogeneous ideal, then $\sigma_0$ splits $I$ compatibly if 
  $\sigma$ splits $I$ compatibly.

  If $S$ is Frobenius split, then $X = \Proj(S)$ is Frobenius split.
  If $I$ is a compatibly split homogeneous ideal, then the closed
  subscheme $Y=\Proj(S/I)$ is compatibly split in $X$.
\end{Lemma}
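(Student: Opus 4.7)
The lemma splits into an algebraic part (statements about $\sigma_0$ as a map of graded $S$-modules) and a sheaf-theoretic part (transferring those statements to $X=\Proj(S)$). Throughout I will use that $\sigma_0$, extended by zero on the complementary summands $F_*S^{(j)}$ for $j\neq 0$, is an $S$-linear map $F_*S\to S$: on each piece $S_{np}$ it equals $\sigma$ followed by projection onto the degree-$n$ component, and both operations respect the Frobenius-twisted $S$-action.

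For the splitting axiom it suffices to check $\sigma_0(1)=1$, which is immediate since $1\in S_0=S_{0\cdot p}$ and $\sigma(1)=1\in S_0$ is already its own degree-$0$ component. For compatible splitting of a homogeneous ideal $I=\bigoplus_m I_m$, an element $a\in I\cap S_{np}$ maps under $\sigma$ into the graded submodule $I$, so each homogeneous component of $\sigma(a)$ — including the degree-$n$ piece, which by definition is $\sigma_0(a)$ — lies in $I$; elements of $I$ of degree not divisible by $p$ are killed by $\sigma_0$.

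For the geometric half, $\sigma_0:F_*S^{(0)}\to S$ is a degree-$0$ morphism of graded $S$-modules and so sheafifies to a morphism $\widetilde{F_*S^{(0)}}\to\cO_X$, which I identify with a map $\widetilde{\sigma_0}:F_*\cO_X\to\cO_X$ using Lemma \ref{Lemma:tildeF}. I then verify that the composition with $F^{\#}:\cO_X\to F_*\cO_X$ is the identity, working on a distinguished open $D_+(f)$: the Frobenius sends $s/f^n\in S_{(f)}$ to $s^p/f^{np}$, and under the local isomorphism $\varphi_f$ of Lemma \ref{Lemma:tildeF} this corresponds to the element $s^p\in S_{np}\subset F_*S^{(0)}$. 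Applying $\sigma_0$ produces the degree-$n$ component of $\sigma(s^p)=s\,\sigma(1)=s$, which is $s$ itself, as required.

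Finally, for the compatibility of $Y=\Proj(S/I)$ in $X$, the ideal sheaf $\cI_Y\subset\cO_X$ is the sheafification $\widetilde{I}$ and $F_*\cI_Y\cong\widetilde{F_*I^{(0)}}$ by Lemma \ref{Lemma:tildeF} applied to $I$ in place of $S$, so sheafifying the inclusion $\sigma_0(F_*I^{(0)})\subset I$ gives $\widetilde{\sigma_0}(F_*\cI_Y)\subset\cI_Y$. The only mildly delicate point in the whole argument is matching the algebraic Frobenius $s\mapsto s^p$ on $S$ with the sheaf-theoretic Frobenius $F^{\#}$ under the isomorphism of Lemma \ref{Lemma:tildeF}; once this is pinned down on a single $D_+(f)$, everything else reduces to direct manipulation of graded components together with the exactness of sheafification.
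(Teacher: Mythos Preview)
Your proof is correct and follows essentially the same route as the paper's: verify $\sigma_0(1)=1$ and $\sigma_0(I)\subset I$ by projecting onto the appropriate graded component, then sheafify $\sigma_0$ via Lemma~\ref{Lemma:tildeF}. You spell out the local verification on $D_+(f)$ (that $\widetilde{\sigma_0}\circ F^\#=\mathrm{id}$) more explicitly than the paper does, but the underlying argument is the same; one minor imprecision is that under $\varphi_f^{-1}$ the element $s^p/f^{np}$ corresponds to $s^p/f^n\in (F_*S^{(0)})_{(f)}$ rather than literally to $s^p$, though this does not affect the computation.
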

\begin{proof}
  Let $\sigma: F_* S\rightarrow S$ be a Frobenius splitting. Clearly
  $\sigma(1)_0 = \sigma_0(1)$, so that $\sigma_0$ is a Frobenius
  splitting if $\sigma$ is.  Notice that
  $\sigma(I)\subset I$ implies $\sigma_0(I)\subset I$, since
  $\sigma_0(x) = \sigma(x)_n$ for $x\in S_{n p}$.  Now the statements
  in the first part of the lemma follow.  For the second part let
  $\cI\subset \cO_X$ be the ideal sheaf defining $Y$. Then $ \cI =
  \widetilde{I}$ and $\cO_X = \widetilde{S}$. Now Lemma
  \ref{Lemma:tildeF} gives
\begin{align*}
F_* \cI &= \widetilde{F_* I^{(0)}},\\
F_* \cO_X &= \widetilde{F_* S^{(0)}}.
\end{align*}  
The graded $S$-homomorphism $\sigma_0:F_* S^{(0)}\rightarrow S$ then
  gives a Frobenius splitting $\widetilde{\sigma_0}:F_* \cO_X
  \rightarrow \cO_X$ with $\widetilde{\sigma_0}(F_* \cI)\subset \cI$.
\end{proof}

\begin{Corollary}\label{Corollary:homogsplit}
  Let $X$ be a scheme, $\cS = \cS_0\oplus \cS_1\oplus \cdots$ a sheaf
  of graded $\cO_X$-algebras and $\cI\subset \cS$ a homogeneous ideal.
  We will assume that $F_* \cS$ locally is a finitely generated
  $\cS$-module. If $\cS$ is Frobenius split compatibly with $\cI$,
  then $\Proj(\cS)$ is Frobenius split compatibly with
  $\Proj(\cS/\cI)$.
\end{Corollary}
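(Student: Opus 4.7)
The plan is to sheafify Lemma \ref{Lemma:homogsplit}, reducing the problem to the affine base case already proved there, and then showing that the local constructions glue. Since $\Proj(\cS)$ is built by gluing the schemes $\Proj(\cS(U))$ for affine opens $U \subset X$, and since the local finite generation hypothesis lets us apply the affine result on each $U$, the only real content is checking compatibility on overlaps.

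First I would work on an affine open $U = \Spec A \subset X$ small enough that $F_*\cS|_U$ is finitely generated over $\cS|_U$. Setting $S = \cS(U)$ and $I = \cI(U)$, the sheaf splitting $\sigma\colon F_*\cS \to \cS$ restricts to a ring-level Frobenius splitting $\sigma_U\colon F_*S \to S$ with $\sigma_U(I) \subset I$. Lemma \ref{Lemma:homogsplit} then produces the graded component $(\sigma_U)_0\colon F_*S^{(0)} \to S$, which is a Frobenius splitting compatibly splitting $I$, and upon applying the tilde functor and Lemma \ref{Lemma:tildeF} yields a Frobenius splitting $\widetilde{(\sigma_U)_0}\colon F_*\cO_{\Proj(S)} \to \cO_{\Proj(S)}$ compatibly splitting the ideal sheaf of $\Proj(S/I) \subset \Proj(S)$.

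Next I would verify that these local splittings agree on overlaps. For $V \subset U$ another affine open, functoriality of $\sigma$ gives $\sigma_V = \sigma_U \otimes_A A'$ where $A' = \cO_X(V)$, and extraction of the degree-zero component on $F_*S^{(0)}$ commutes with localization in $A$ because the grading is preserved. Equivalently, the local isomorphisms $\varphi_f$ of Lemma \ref{Lemma:tildeF} intertwine the two candidate splittings on the corresponding basic open sets $D_+(f)$. This produces a well-defined global Frobenius splitting $\tau\colon F_*\cO_{\Proj(\cS)} \to \cO_{\Proj(\cS)}$, and by the same naturality argument applied to the homogeneous ideal $\cI$ the subscheme $\Proj(\cS/\cI)$ is compatibly split under $\tau$.

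The main obstacle, such as it is, lies in the bookkeeping of this gluing step: one must check that degree-zero truncation and the $\Proj$ tilde construction are sufficiently natural in the graded algebra to descend from affines on $X$ to a global statement. Everything else is a direct citation of Lemma \ref{Lemma:homogsplit} together with Lemma \ref{Lemma:tildeF}, and the local finite generation assumption is used precisely to guarantee that Lemma \ref{Lemma:tildeF} applies on each affine patch of $X$.
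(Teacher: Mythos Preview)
Your proposal is correct and follows essentially the same route as the paper: reduce to affine opens, apply Lemma~\ref{Lemma:homogsplit} (with Lemma~\ref{Lemma:tildeF}) there, and then glue. The only cosmetic difference is that the paper first observes that the degree-zero truncation $\sigma_0$ makes sense globally as a sheaf map $F_*\cS^{(0)}\to\cS$ and then restricts to affines, whereas you restrict first and then truncate; since these operations commute, the two arguments coincide.
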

\begin{proof}
  Let $\sigma: F_* \cS\rightarrow\cS$ be a Frobenius splitting of
  $\cS$ with $\sigma(F_* \cI)\subset \cI$.  The construction of
  $\sigma_0$ globalizes to give a Frobenius splitting
  $\sigma_0:F_*\cS^{(0)}\rightarrow \cS$ (with $\sigma_0(\cS_{n
    p})\subset \cS_n$ and $\sigma_0(\cS_m) = 0$ for $p\nmid m$). For
  an affine open subset $U\subset X$, $\sigma_0$ gives by Lemma
  \ref{Lemma:homogsplit} a Frobenius splitting
$$
\sigma_U:F_* \cO_{\Proj(\cS(U))}\rightarrow \cO_{\Proj(\cS(U))}
$$
compatibly splitting the closed subscheme $\Proj(\cS(U)/\cI(U))$.
Coming from the global splitting $\sigma_0$, these splittings patch up
to give the desired global splitting of $\Proj(\cS)$.
\end{proof}

\subsection{Duality for the Frobenius morphism}

On a non-singular variety $X$
duality for the Frobenius morphism $F:X\rightarrow X$ is available for
the study of Frobenius splitting: there is a functorial isomorphism
$F_* \omega_X^{1-p}\rightarrow \SHom_{\cO_X}(F_*\cO_X, \cO_X)$, where
$\omega_X$ is the canonical line bundle on $X$. In
\cite{MehtaRamanathan1985}, it is shown how geometric properties of
the zero divisor of a section of $\omega_X^{1-p}$ translate into
properties of compatible Frobenius splitting. To recall this powerful
result in more precise terms, we need to introduce some notation.

If $\alpha\in \QQ\setminus \NN$ and $x$ is a variable, we define
$x^\alpha := 0$. 
Now let $x = (x_1,\dots, x_n)$ denote a regular
system of parameters (in a regular local ring) and $\alpha = (\alpha_1, \dots,
\alpha_n)\in\QQ^n$ a rational vector.  Then we define
$$
x^\alpha := x_1^{\alpha_1}\cdots x_n^{\alpha_n}.
$$
and $x^\gamma:= x_1^\gamma \cdots x_n^\gamma$ for $\gamma\in \QQ$. 
\begin{Theorem}[Mehta and Ramanathan \cite{MehtaRamanathan1985}]\label{Theorem:MR}
  Let $X$ be a non-singular variety of dimension $n$ over an
  algebraically closed field $k$ of characteristic $p$.  Then
  there is a canonical isomorphism
$$
\partial: F_* \omega_X^{1-p}\rightarrow \SHom_{\cO_X}(F_*\cO_X, \cO_X)
$$
of $\cO_X$-modules whose completion 
$$
\hat{\partial}_P: F_*\omega_{\hat{R}}^{1-p}\rightarrow
\Hom_{\hat{R}}(F_* \hat{R}, \hat{R})
$$
at a closed point $P\in X$, is given by
$$
\hat{\partial}_P\left(x^\alpha \frac{1}{(d x)^{p-1}}\right)(x^\beta) =
x^{(\alpha + \beta + 1)/p -1}, 
$$
where $\hat{R}=k[[x_1,\dots, x_n]]$ and $x_1,\dots, x_n$ is a regular
system of parameters in $R := \cO_{X, P}$ with $d x = dx_1\wedge \cdots \wedge dx_n$.
\end{Theorem}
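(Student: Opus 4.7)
The plan is to derive $\partial$ from Grothendieck duality for the Frobenius (which is finite and flat on a smooth variety in characteristic $p$) and then to identify the resulting global duality isomorphism with the stated local formula.

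First I would note that since $X$ is smooth over the perfect field $k$, the absolute Frobenius $F\colon X\to X$ is finite ($k$ is perfect and $X$ is of finite type) and flat (by Kunz's theorem, or miracle flatness between regular schemes with fibres of constant length $p^n$). Grothendieck duality for a finite flat morphism $f\colon Y\to X$ between smooth varieties provides a canonical isomorphism
$$
\SHom_{\cO_X}(f_*\cO_Y,\cO_X)\;\cong\;f_*\bigl(\omega_Y\otimes f^*\omega_X^{-1}\bigr),
$$
and for $f=F$ the relation $F^*\omega_X\cong\omega_X^{\otimes p}$ turns the right hand side into $F_*\omega_X^{1-p}$. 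Inverting this identification defines $\partial$.

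Next I would verify that the candidate formula actually defines an $\cO_X$-linear isomorphism at the completion $\hat R=k[[x_1,\ldots,x_n]]$ at a closed point $P$. Here $F_*\hat R$ is a free $\hat R$-module of rank $p^n$ with basis $\{x^\beta:0\le\beta_i\le p-1\}$ (under the action $r\cdot m = r^p m$), and $F_*\omega_{\hat R}^{1-p}$ is free of the same rank with basis $\{x^\alpha(dx)^{1-p}:0\le\alpha_i\le p-1\}$. Plugging basis pairs into the formula, $\hat\partial_P(x^\alpha(dx)^{1-p})(x^\beta)$ vanishes unless $\alpha_i+\beta_i=p-1$ for every $i$, in which case it equals $1$; so $\hat\partial_P$ carries one basis to the dual basis. $\hat R$-linearity is the identity $(\alpha+p\gamma+\beta+1)/p-1 = \gamma + (\alpha+\beta+1)/p - 1$, together with the convention $x^\delta=0$ for $\delta\notin\NN^n$.

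The remaining work, where the bulk of the content lies, is to identify this concrete local map with the global $\partial$. The cleanest route is to reduce to the model case $X=\AA^n_k$ via the \'etale-local structure of smooth varieties: the regular parameters $x_1,\ldots,x_n$ cut out an \'etale map $U\to\AA^n_k$ on a neighbourhood of $P$, and both $F$ and Grothendieck duality are compatible with \'etale base change. On $\AA^n_k$ one computes the Cartier trace $\operatorname{tr}\colon F_*\omega_{\AA^n}\to\omega_{\AA^n}$ directly, finding $\operatorname{tr}(x^\nu\,dx) = x^{(\nu+1)/p - 1}\,dx$; via the projection formula $F_*\omega_X^{1-p}\cong F_*\omega_X\otimes\omega_X^{-1}$, this trace transports to the adjunction $\partial(s)(t) = \operatorname{tr}(ts\cdot(dx)^{p-1})\cdot(dx)^{-1}$, which on monomials unwinds to exactly the stated formula. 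The principal obstacle in this step is confirming that the Cartier trace on $F_*\omega_{\AA^n}$ really coincides with the Grothendieck trace (rather than being some \emph{ad hoc} $\cO_X$-linear functional of the correct shape); this rests on the characterisation $C(f^{p-1}\,df)=df$ of the Cartier operator on top forms and its naturality under \'etale maps, after which the identification of the formula with $\partial$ is a routine unwinding.
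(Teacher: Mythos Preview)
The paper does not prove this theorem: it is quoted as a result of Mehta and Ramanathan \cite{MehtaRamanathan1985} and immediately followed by a remark on how to use it, with no argument supplied. So there is no proof in the paper to compare your proposal against.

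That said, your sketch is the standard route and is sound. The isomorphism really does come from Grothendieck duality for the finite flat Frobenius together with $F^*\omega_X\cong\omega_X^{\otimes p}$, and the explicit local formula is exactly the Cartier operator on top forms transported through the projection formula. Your handling of the freeness of $F_*\hat R$ and the dual-basis computation is correct. The one place to be careful is precisely where you flagged it: matching the abstract Grothendieck trace with the concrete Cartier operator. This is indeed known (and is how Mehta--Ramanathan and later Brion--Kumar \cite[\S1.3]{BrionKumar2005} proceed), but in a fully written-out proof you would want to invoke the uniqueness of the trace map up to unit, or the explicit residue-symbol computation, rather than leave it as a ``routine unwinding''.
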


\begin{Remark}\label{RemarkCompl}
  Notice that $\partial(s)$ in Theorem \ref{Theorem:MR} is a Frobenius
  splitting if and only if $\partial(s)(1) = 1$. This translates into a
  local condition on the section $s$. Suppose
$$
s_P=(\sum_\alpha a_\alpha x^\alpha) (1/ dx)^{p-1}
$$
is a local expansion of $s$ at $P\in X$. Let $\supp(s_P)$ denote the
exponents of the monomials occurring with non-zero coefficient in
$s_P$.  For $\partial(s_P)$ to be a Frobenius splitting we must have
$p-1\in \supp(s_P)$ and $p-1 + p v\not\in \supp(s_P)$ for $v\in
\NN^n\setminus\{0\}$. If $X$ is complete, then $\partial(s)$ is a
Frobenius splitting if and only if $p-1\in \supp(s_P)$ for some $P\in
X$ \cite[Proposition 6]{MehtaRamanathan1985} .
\end{Remark}

An important consequence of this result is the following
\cite[Proposition 1.3.11]{BrionKumar2005}.

\begin{Lemma}\label{Lemma:Z}
  Let $X$ be a complete smooth variety.  If $\sigma$ is a section of
  $\omega_X^{-1}$ such that $\partial(\sigma^{p-1})$ is a Frobenius
  splitting of $X$, then the subscheme of zeros, $Z(\sigma) \subset
  X$, is compatibly split under $\partial(\sigma^{p-1})$.
\end{Lemma}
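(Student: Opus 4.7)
The plan is to verify compatibility stalk-by-stalk: show $\partial(\sigma^{p-1})_P(\cI_P) \subset \cI_P$ at each closed point $P \in X$, where $\cI = \cI_{Z(\sigma)}$. Choose a regular system of parameters $x_1, \dots, x_n$ in $R = \cO_{X,P}$, so that $\omega_X^{-1}$ is locally trivialized by $1/dx$, and write $\sigma_P = g \cdot (1/dx)$ for some $g \in R$. Then $\cI_P = (g)$ and $(\sigma^{p-1})_P = g^{p-1} (1/dx)^{p-1}$. The cases where $g$ is a unit (so $P \notin Z(\sigma)$) or $g \equiv 0$ (so $Z(\sigma)$ contains a neighborhood of $P$) are trivial, so the task reduces to showing $\partial(\sigma^{p-1})(gh) \in (g)$ for all $h \in R$.

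The key is two identities for $\hat\partial_P$, both read off the explicit formula from Theorem \ref{Theorem:MR}:
$$
\hat\partial_P\bigl(a\,(1/dx)^{p-1}\bigr)(h) \;=\; \hat\partial_P\bigl((1/dx)^{p-1}\bigr)(a h), \qquad \hat\partial_P\bigl((1/dx)^{p-1}\bigr)(f^p h) \;=\; f\,\hat\partial_P\bigl((1/dx)^{p-1}\bigr)(h).
$$
The first is an adjunction-type identity verified on monomials $a = x^\alpha$, $h = x^\beta$ (where both sides reduce to $x^{(\alpha + \beta + 1)/p - 1}$) and extended by linearity and $x$-adic continuity; the second expresses the $\hat R$-linearity of $\hat\partial_P((1/dx)^{p-1}) \in \Hom_{\hat R}(F_*\hat R, \hat R)$ with respect to the twisted $\hat R$-structure on $F_*\hat R$. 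Taking $a = g^{p-1}$ in the first identity and replacing $h$ by $gh$, they combine into the one-line computation
$$
\hat\partial_P\bigl(g^{p-1}(1/dx)^{p-1}\bigr)(gh) \;=\; \hat\partial_P\bigl((1/dx)^{p-1}\bigr)(g^p h) \;=\; g\cdot \hat\partial_P\bigl((1/dx)^{p-1}\bigr)(h) \;\in\; g\hat R.
$$

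To conclude, since $\partial(\sigma^{p-1})(gh) \in R$ and $gR = g\hat R \cap R$ by faithful flatness of $R \to \hat R$, we obtain $\partial(\sigma^{p-1})(\cI_P) \subset \cI_P$ at every $P$, so $Z(\sigma)$ is compatibly split under $\partial(\sigma^{p-1})$. The only non-routine ingredient is the adjunction identity for $\hat\partial_P$, but this is immediate from the formula in Theorem \ref{Theorem:MR}; no further obstacle is anticipated, and the Frobenius-splitting hypothesis on $\partial(\sigma^{p-1})$ is used only to know we are working with an honest splitting to begin with.
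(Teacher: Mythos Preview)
Your proof is correct. The paper does not give its own proof of this lemma; it simply cites \cite[Proposition~1.3.11]{BrionKumar2005}, and the argument you have written is essentially the standard one found there: reduce to a local computation, use the explicit description of $\hat\partial_P$ from Theorem~\ref{Theorem:MR} to rewrite $\partial(\sigma^{p-1})(gh)$ as $g\cdot\hat\partial_P((1/dx)^{p-1})(h)$, and descend from $\hat R$ to $R$ by faithful flatness. A small remark: completeness of $X$ plays no role in your argument (nor is it needed for this statement); in the paper it is inherited from the ambient setting rather than being essential to the lemma.
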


We have the following result analogous to 
\cite[Proposition 2.1]{LakshmibaiMehtaParameswaran1998}. In the proof
we use the notation
$$
|\alpha| := \alpha_1 +\cdots + \alpha_n
$$
for a vector $\alpha = (\alpha_1, \dots, \alpha_n)\in \QQ^n$.

\begin{Lemma}\label{Lemma:LMP}
  Let $Z$ be a non-singular variety of dimension $n$ and $W\subset Z$
  a non-singular subvariety of codimension $d$. Let $s$ be a section
  of $\omega_Z^{1-p}$, such that $\partial(s)$ is a Frobenius
  splitting of $Z$. Then $s$ vanishes with multiplicity $\leq d(p-1)$
  on $W$. The section $s$ vanishes with maximal multiplicity $d(p-1)$
  on $W$ if and only if $W$ is maximally compatibly split under
  $\partial(s)$.
\end{Lemma}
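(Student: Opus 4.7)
I would work locally at a closed point $P \in W$, choosing a regular system of parameters $x_1, \dots, x_n$ in $\cO_{Z,P}$ such that $W$ is cut out by $\cI_P = (x_1, \dots, x_d)$ (cf.~\S\ref{mult-van}). Writing
$$
s_P = \Big(\sum_\alpha a_\alpha\, x^\alpha\Big)\,\frac{1}{(dx)^{p-1}},
$$
and using that $\cI^m$ is the monomial ideal generated by the $x^\mu$ with $\mu_1+\cdots+\mu_d \ge m$, the vanishing multiplicity of $s$ on $W$ equals $\min\{\alpha_1+\cdots+\alpha_d : a_\alpha \ne 0\}$. The bound $\le d(p-1)$ is then immediate: since $\partial(s)$ is a Frobenius splitting, Remark~\ref{RemarkCompl} gives $(p-1,\dots,p-1) \in \supp(s_P)$.

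For the main equivalence I would combine this with the explicit formula from Theorem~\ref{Theorem:MR},
$$
\partial(s)(x^\beta) = \sum_\alpha a_\alpha\, x^{(\alpha + \beta + 1)/p - 1},
$$
summed over the $\alpha \in \supp(s_P)$ for which $\gamma := (\alpha + \beta + 1)/p - 1 \in \NN^n$, together with the straightforward identity
$$
\gamma_1+\cdots+\gamma_d = \frac{(\alpha_1+\cdots+\alpha_d) + (\beta_1+\cdots+\beta_d) + d}{p} - d.
$$
For the direction ``maximal vanishing $\Rightarrow$ maximal compatible splitting'', if every $\alpha \in \supp(s_P)$ satisfies $\alpha_1+\cdots+\alpha_d \ge d(p-1)$, then for any $\beta$ with $\beta_1+\cdots+\beta_d \ge np+1$ the identity forces $\gamma_1+\cdots+\gamma_d \ge n + 1/p$, hence $\ge n+1$ by integrality. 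This gives $\partial(s)(\cI^{np+1}) \subset \cI^{n+1}$.

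For the converse I would argue by contrapositive: suppose some $\alpha \in \supp(s_P)$ has $m := \alpha_1+\cdots+\alpha_d < d(p-1)$, and write $\alpha_i = pk_i + v_i$ with $v_i \in \{0,\dots,p-1\}$. The key move is to set $\beta_i := p-1-v_i$ for every $i$, so that each $\beta_i$ lies in $\{0,\dots,p-1\}$. This forces every contributing $\alpha' \in \supp(s_P)$ to satisfy $\alpha' \equiv v \pmod p$ componentwise, and the quotient $(\alpha'-v)/p$ then recovers $\alpha'$ uniquely from the output monomial; hence the map $\alpha' \mapsto \gamma'$ is injective on contributing terms, and no cancellation occurs in $\partial(s)(x^\beta)$. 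In particular $x^k$ appears there with coefficient $a_\alpha \ne 0$. Setting $V := v_1+\cdots+v_d$ and $n := (m-V)/p = k_1+\cdots+k_d$, the hypothesis $m < d(p-1)$ gives $\beta_1+\cdots+\beta_d = d(p-1) - V \ge np+1$, so $x^\beta \in \cI^{np+1}$. Maximal compatible splitting would then force $\partial(s)(x^\beta) \in \cI^{n+1}$, contradicting the appearance of $x^k$ with $k_1+\cdots+k_d = n$. The one subtle point is this ``no cancellation'' step, and it is the reason why all $\beta_i$ must be taken strictly below $p$, so that each output monomial has a unique source in $\supp(s_P)$.
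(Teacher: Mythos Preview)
Your proof is correct and follows the same strategy as the paper: work in local coordinates, read off the bound $\le d(p-1)$ from the presence of the monomial $x^{p-1}$, prove the forward implication by the identical degree estimate, and prove the converse by contrapositive via a carefully chosen test monomial $x^\beta$ whose image under $\partial(s)$ fails the inclusion $\partial(s)(\cI^{Np+1})\subset\cI^{N+1}$ for a suitable $N$.

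The only real difference is the construction of the test monomial. You write $\alpha_i = pk_i + v_i$ with $v_i\in\{0,\dots,p-1\}$ and set $\beta_i = p-1-v_i$, so all $\beta_i$ lie in $\{0,\dots,p-1\}$; the paper instead partitions by $m_i(p-1)\le \alpha_i < (m_i+1)(p-1)$ and takes $\gamma_i = (m_i+1)p-\alpha_i-1$, which can exceed $p-1$. Both constructions make the assignment $\alpha'\mapsto\text{(output monomial)}$ injective on contributing terms, so the key output monomial cannot be cancelled---but you spell this ``no cancellation'' step out explicitly, whereas the paper computes only the single-term contribution $\partial(x^\alpha y^\beta(dx\wedge dy)^{1-p})(x^\gamma y^\delta)$ and leaves the passage to the full $\partial(t)$ implicit. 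Your version is therefore slightly tighter in this respect. One cosmetic remark: you recycle the symbols $m$ and $n$ (the latter already being $\dim Z$) for the auxiliary integers $\alpha_1+\cdots+\alpha_d$ and $k_1+\cdots+k_d$; renaming these would avoid any possible confusion.
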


\begin{proof}
  Let $z_1, \dots, z_n$ be a regular system of parameters in
  $R:=\cO_{Z, P}$, where $P\in W$. We may assume that the ideal
  $I\subset R$ defining $W$ at $P$ is given by $x:=(z_1,\dots, z_d)$.
  Define $y:=(z_{d+1}, \dots, z_n)$ and let
\begin{equation}\label{masp}
t = (\sum a_{\alpha,\beta}x^\alpha y^\beta)\left(\frac{1}{d x\wedge d
    y}\right)^{p-1}
\end{equation}
be the local expansion of $s$ at $P$ in the completion $k[[z_1,\dots,
z_n]]$ of $R$. If $s$ vanishes with multiplicity $> d(p-1)$ on $W$, then the
term $x^{p-1} y^{p-1}$ cannot occur with non-zero coefficient in \eqref{masp}
contradicting that $\partial(s)$ is a Frobenius splitting. Therefore
$s$ vanishes with multiplicity $\leq d(p-1)$ on $W$.

Assume that $t$ vanishes with multiplicity $d(p-1)$ on $W$.  This
means that $|\alpha|\geq d(p-1)$ for every $\alpha$ with $a_{\alpha,
  \beta}\neq 0$ in \eqref{masp}. We will prove that $\partial(t)(I^{m p+1})
\subset I^{m+1}$ for $m\geq 0$. For this we assume that
$$
w=\sum c_{\gamma, \delta} x^\gamma y^\delta\in I^{m p+1}
$$
i.e. $|\gamma|\geq m p+1$ for every $\gamma$ with $c_{\gamma,
  \delta}\neq 0$. Now we have
$$
|(\alpha + \gamma + 1)/p - 1| \geq \frac{d(p-1) + m p + 1 + d}{p} - d = m + \frac{1}{p}.
$$
So if the vector $(\alpha + \gamma + 1)/p$ is integral, then $|(\alpha
+ \gamma + 1)/p - 1|\geq m+1$. This shows that $\partial(t)(w)\in
I^{m+1}$ recalling the definition of $\partial(t)$ in Theorem
\ref{Theorem:MR}.

Now assume that $\partial(t)(I^{m p+1}) \subset I^{m+1}$ for $m\geq
0$. We will prove that $t$ has to vanish with multiplicity $d(p-1)$ on
$W$. Suppose that $|\alpha| < d (p-1)$ for some non-zero $a_{\alpha, \beta}$ in \eqref{masp}. Let $m_i\in \NN$ be given by
$$
m_i (p-1) \leq \alpha_i < (m_i + 1)(p-1)
$$
for $i = 1, \dots, d$ and similarly $m_j(p-1) \leq \beta_j < (m_j+1)(p-1)$ for 
$j=d+1, \dots, n$.
Define the monomial $x^\gamma y^\delta\in I$ by
$$
\gamma = ((m_1 + 1) p - \alpha_1 -1, \dots, (m_d+1)p-\alpha_d-1)
$$
and similarly $\delta = ((m_{d+1} + 1)p - \beta_{d+1} - 1, \dots, 
(m_n + 1)p - \beta_n-1)$. Then
$$
\partial(x^\alpha y^\beta\, (\frac{1}{d x\wedge d
    y})^{p-1})(x^\gamma y^\delta)\in I^{m_1 + \cdots + m_d}\setminus
I^D,
$$
where $D = m_1 + \cdots + m_d + 1$.
But $x^\gamma y^\delta\in I^{(D-1)p + 1}$, since
\begin{align*}
\sum_{i=1}^d ((m_i+1) p - \alpha_i - 1) &>
\sum_{i=1}^d (m_i+1)p - d (p-1) - d = \sum_{i=1}^d m_i p\\
&=(D-1)p.
\end{align*}
This contradicts our assumption and we must have $|\alpha|\geq d
(p-1)$ for every non-zero $a_{\alpha, \beta}$ in \eqref{masp}.
\end{proof}

The following remark relates to the issue of Frobenius splitting of
the tangent bundle on a Frobenius split variety (cf.~our remarks in
the end of the introduction).

\begin{Remark}
  If $X$ is smooth and $X\times X$ is Frobenius split with the
  diagonal $\Delta_X\subset X\times X$ maximally compatibly split,
  then the tangent bundle $T_X$ on $X$ is Frobenius split, since the
  exceptional divisor in $\Bl_{\Delta_X}(X\times X)$ is isomorphic to
  $ \PP(T_X)$ \cite[Lemma 1.1.11]{BrionKumar2005}. 
\end{Remark}

We also need the following (\cite[Proposition
2.3]{LakshmibaiMehtaParameswaran1998} and \cite[Exercises
1.3.E.(13)]{BrionKumar2005}).

\begin{Proposition}\label{Prop:pdm}
  Let $f:X\rightarrow Y$ be a proper morphism of smooth varieties with
  $f_*\cO_X = \cO_Y$. Let $Z\subset X$ be a smooth subvariety such
  that $f$ is smooth at some point of $Z$. If $X$ is Frobenius split
  and $Z$ compatibly split with maximal multiplicity, then the induced
  splitting of $Y$ has maximal multiplicity along the non-singular
  locus of $f(Z)$.
\end{Proposition}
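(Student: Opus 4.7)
The plan is to push the splitting $\sigma$ forward to $Y$ and then read off the vanishing multiplicity of the resulting section in coordinates adapted to $f$, $Z$, and $f(Z)$.

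Let $\tau=f_*\sigma$ be the induced Frobenius splitting of $Y$, which is well-defined because $f_*\cO_X=\cO_Y$ and $f_*F_{X*}=F_{Y*}f_*$: concretely, $\tau(h)=\sigma(h)$ when $h$ is a local section of $\cO_Y$ viewed inside $f_*\cO_X=\cO_Y$. Write $s,t$ for the sections of $\omega_X^{1-p},\omega_Y^{1-p}$ with $\partial(s)=\sigma$, $\partial(t)=\tau$, and put $d=\operatorname{codim}_X Z$, $d'=\operatorname{codim}_Y f(Z)$. By Lemma \ref{Lemma:LMP}, it suffices to show that $t$ vanishes to order exactly $d'(p-1)$ along $f(Z)$ at one smooth point per irreducible component of $f(Z)$, since this multiplicity is a generic invariant along an irreducible subvariety.

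Choose $P\in Z$ so that $f$ is smooth at $P$ (possible by hypothesis, on a nonempty open subset of $Z$), $f(P)$ is in the smooth locus of $f(Z)$, and $f|_Z$ is smooth at $P$ (by generic smoothness of the dominant map $f|_Z\colon Z\to f(Z)$). Pick a regular system of parameters $u_1,\ldots,u_m$ in $\cO_{Y,f(P)}$ with $(u_1,\ldots,u_{d'})=I_{f(Z)}$. Because $f$ is smooth at $P$, the pullbacks $x_i:=f^*u_i$ extend to a regular system of parameters $(x_1,\ldots,x_m,y_1,\ldots,y_{n-m})$ in $\cO_{X,P}$, and because $f|_Z$ is smooth at $P$ the $y_i$'s can be chosen so that $I_Z=(x_1,\ldots,x_{d'},y_1,\ldots,y_{d-d'})$ in $\hat\cO_{X,P}$. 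Expand
\[
s_P=\Bigl(\sum_{\alpha,\beta}a_{\alpha,\beta}x^\alpha y^\beta\Bigr)\left(\frac{1}{dx\wedge dy}\right)^{p-1},
\]
so that maximal compatibility of $Z$ and Lemma \ref{Lemma:LMP} give $|\alpha'|+|\beta'|\geq d(p-1)$ whenever $a_{\alpha,\beta}\neq 0$, where $\alpha':=(\alpha_1,\ldots,\alpha_{d'})$ and $\beta':=(\beta_1,\ldots,\beta_{d-d'})$.

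The global hypothesis $f_*\cO_X=\cO_Y$ forces $\sigma(x^\gamma)$ to lie in $\hat\cO_{Y,f(P)}\subset\hat\cO_{X,P}$, i.e.\ to be $y$-independent. Applying Theorem \ref{Theorem:MR} to compute $\sigma(x^\gamma)$ and noting that the only $\beta\in\NN^{n-m}$ yielding $y^0$ in the factor $y^{(\beta+1)/p-1}$ is $\beta=(p-1,\ldots,p-1)$, one obtains
\[
t_{f(P)}=\sum_{\alpha}a_{\alpha,(p-1,\ldots,p-1)}\,x^\alpha\left(\frac{1}{dx}\right)^{p-1}.
\]
For these surviving terms $|\beta'|=(d-d')(p-1)$, hence $|\alpha'|\geq d(p-1)-(d-d')(p-1)=d'(p-1)$, so $t$ vanishes to order $\geq d'(p-1)$ along $I_{f(Z)}=(u_1,\ldots,u_{d'})$ at $f(P)$. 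The universal upper bound from Lemma \ref{Lemma:LMP} gives equality, and hence maximal compatible splitting of $f(Z)$ at $f(P)$, which propagates to the entire smooth locus. The main obstacle is the identification of $t_{f(P)}$ with the $y^{(p-1,\ldots,p-1)}$-coefficient of $s_P$—equivalently, that on Mehta--Ramanathan sections the pushforward $\tau=f_*\sigma$ corresponds to fiber integration. This relies on combining the global condition $f_*\cO_X=\cO_Y$ with the formal model of $f$ near $P$ as a projection; once it is in place, the multiplicity comparison is immediate.
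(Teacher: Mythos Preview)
The paper does not prove this proposition; it simply cites \cite{LakshmibaiMehtaParameswaran1998} and \cite{BrionKumar2005}. So there is no internal argument to compare against, and your approach---pushing $\sigma$ forward, identifying the induced section $t$ via the local ``fiber integration'' formula
\[
t_{f(P)}=\sum_{\alpha}a_{\alpha,(p-1,\ldots,p-1)}\,u^\alpha\Bigl(\tfrac{1}{du}\Bigr)^{p-1},
\]
and then reading off the vanishing order---is exactly the standard strategy in the cited references. Your derivation of that formula is correct: since $f^{*}\tau(h)=\sigma(f^{*}h)$ globally and $f^{*}$ embeds $\hat{\cO}_{Y,f(P)}$ as $k[[x]]\subset k[[x,y]]$, the power series $\hat\sigma_P(x^\gamma)$ is forced to equal its own $y$-constant term.

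There is, however, a genuine gap. You invoke ``generic smoothness of the dominant map $f|_Z\colon Z\to f(Z)$'' to arrange $I_Z=(x_1,\ldots,x_{d'},y_1,\ldots,y_{d-d'})$, i.e.\ to make the additional generators of $I_Z$ lie in the vertical direction. Generic smoothness fails in positive characteristic: for instance with $f\colon\AA^3\to\AA^2$, $(x_1,x_2,y_1)\mapsto(x_1,x_2)$ and $Z=V(x_1,\,x_2-y_1^{\,p})$, the map $f|_Z$ is nowhere a submersion onto $f(Z)=V(u_1)$. In such cases the conormal space $N^*_PZ$ meets the horizontal cotangent space in a subspace strictly larger than $\operatorname{span}(dx_1,\ldots,dx_{d'})$, and one \emph{cannot} choose coordinates with $I_Z$ generated by $x_1,\ldots,x_{d'}$ together with some of the $y_j$. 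Without that alignment your inequality $|\alpha'|+|\beta'|\ge d(p-1)$ for nonzero $a_{\alpha,\beta}$ no longer follows from the vanishing hypothesis on $s$, and the final multiplicity bound for $t$ is left unproved.

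The fix is to avoid the coordinate picture for this step and argue algebraically: for $h\in I_W^{\,np+1}$ one has $f^{*}h\in I_Z^{\,np+1}$, hence $f^{*}\tau(h)=\sigma(f^{*}h)\in I_Z^{\,n+1}$; it then remains to show that $f^{*}g\in I_Z^{\,n+1}$ with $g\in\cO_Y$ forces $g\in I_W^{\,n+1}$ near a smooth point of $W$. This last implication is the nontrivial content that your appeal to generic smoothness was meant to supply, and it needs an independent argument (for example by analysing the associated graded along $I_Z$, using that $x_1,\ldots,x_{d'}$ are part of a regular sequence generating $I_Z$).
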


\subsection{Residual normal crossing}

In this section we recall a very important concept introduced by
Mehta, Lakshmibai and Parameswaran \cite[Definition
1.6]{LakshmibaiMehtaParameswaran1998}.

\begin{Definition}
  A power series $f \in k [[x_1, \ldots ., x_n]]$ is said to have residual
  normal crossings if either
  \begin{itemize}
    \item $n = 0$ and $f \neq 0$ or
    
    \item $n > 0,$ $x_1 | f$ and $f / x_1 + (x_1) \in k [[x_1, \ldots ,
      x_n]] / (x_1) \simeq k [[x_2, \ldots ., x_n]]$ has residual
      normal crossing in $k[[x_2, \dots, x_n]]$.
  \end{itemize}
\end{Definition}

The definition of residual normal crossings is dependent on the
ordering of the variables i.e.~when stating that $f\in k[[x_1, \dots,
x_n]]$ has residual normal crossing, it is implicitly assumed that the
variables $x_1, \dots, x_n$ are ordered.

\begin{Example}
  The polynomial $f = x (z y - x^2) (w - y)\in k[[x,y,z,w]]$ has
  residual normal crossing. However, if the variables are 
  ordered $x, w, z, y$, then $f$ does not have residual normal crossing
  i.e.~$f\in k[[x,w,z,y]]$ does not have residual normal crossing.
\end{Example}

The minimal term in a residual normal crossing power series $f\in
k[[x_1, \dots, x_n]]$ is precisely $x_1 \cdots x_n$, when the
monomials are ordered according to the lexicographical ordering $<$
given by $x_n < x_{n-1} < \cdots < x_1$.
This implies the following result by Remark
\ref{RemarkCompl}.

\begin{Proposition}\label{Prop:rnc}
  Let $X$ be a complete smooth variety, $P\in X$ and $x_1, \dots, x_n$
  a system of parameters of $\cO_{X, P}$. If $s\in \Gamma(X,
  \omega_X^{-1})$, such that $s_P\in \hat{\cO}_{X, P} = k[[x_1,
  \dots, x_n]]$ has residual normal crossing, then $\partial(s^{p-1})$
  is a Frobenius splitting of $X$.
\end{Proposition}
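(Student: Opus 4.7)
The plan is to deduce the proposition directly from Remark \ref{RemarkCompl}. Since $X$ is complete, it is enough to exhibit a single closed point at which the local expansion of $s^{p-1}$ contains the monomial $x^{p-1} = x_1^{p-1}\cdots x_n^{p-1}$ with non-zero coefficient. If $s_P$ corresponds to the series $f \in \hat{\cO}_{X,P} = k[[x_1,\dots,x_n]]$ via the identification $s_P = f \cdot (1/dx)$, then $(s^{p-1})_P = f^{p-1}(1/dx)^{p-1}$, so the task reduces to showing that the coefficient of $x_1^{p-1}\cdots x_n^{p-1}$ in $f^{p-1}$ is non-zero whenever $f$ has residual normal crossing.

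To read off that coefficient, I would invoke the observation stated in the paragraph immediately preceding the proposition: with respect to the lexicographic order $\prec$ determined by $x_n \prec x_{n-1} \prec \cdots \prec x_1$, the lex-minimal element of $\supp(f)$ is $(1,1,\dots,1)$, with some non-zero coefficient $c \in k$. Now $\prec$ is a monomial order, so it is strictly compatible with addition of exponent vectors: $\alpha \succ \beta$ implies $\alpha + \gamma \succ \beta + \gamma$. Therefore if $\alpha^{(1)},\dots,\alpha^{(p-1)} \in \supp(f)$ satisfy $\alpha^{(1)} + \cdots + \alpha^{(p-1)} = (p-1,\dots,p-1)$, then any strict inequality $\alpha^{(i)} \succ (1,\dots,1)$ in a single summand would force the sum to be lex-strictly greater than $(p-1)(1,\dots,1)$, a contradiction. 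Consequently each $\alpha^{(i)}$ equals $(1,\dots,1)$, and the coefficient of $x_1^{p-1}\cdots x_n^{p-1}$ in $f^{p-1}$ is exactly $c^{p-1} \neq 0$.

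I do not anticipate a serious obstacle: the proposition is essentially a packaging of the cited minimal-term observation with Remark \ref{RemarkCompl}, the only small point requiring verification being the multiplicativity of leading terms under lex order, which is immediate from the monomial-order axioms.
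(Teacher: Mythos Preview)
Your proposal is correct and follows exactly the approach the paper indicates. The paper does not spell out a proof at all: it simply states that the proposition follows from the preceding observation (that the lex-minimal term of a residual normal crossing series is $x_1\cdots x_n$) together with Remark~\ref{RemarkCompl}. Your argument is precisely the natural elaboration of that sentence, with the monomial-order multiplicativity step made explicit.
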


\section{Group theory}

Let $G$ be a semisimple algebraic group, $B$ a Borel subgroup of $G$
and $P\supset B$ a parabolic subgroup. A \textit{Schubert variety} is
defined as the closure of a $B$-orbit in the generalized flag variety
$G/P$. The singular locus of a Schubert variety is $B$-stable.

The map $\pi:G\rightarrow G/B$ is
a locally trivial principal $B$-fibration and $G\times^B E\rightarrow
G/B$ is a vector bundle of rank $\dim_k E$, where $E$ is a finite
dimensional representation of $B$.  We let $\Gamma(E)$ denote the
global sections of this vector bundle i.e.
$$
\Gamma(E) = \{f: G\rightarrow E \mid f(x b) = b^{-1} f(x), \text{ for every }x\in G, b\in B\}.
$$
For a one dimensional representation $\chi$ of $B$ we get the following explicit description of the global sections of the line bundle $G\times^B\chi$ on $G/B$:
\begin{equation}
\label{Gamma}
\Gamma(\chi) = \{f\in k[G] \mid f(g b) = \chi(b)^{-1} f(g), \text{ for every }g\in G, b\in B\}.
\end{equation}

For the rest of this paper we will assume that $G=\SL_n(k)$ with $B$
equal to the upper triangular matrices containing the diagonal
matrices
$$
T = \{\diag(t_1, \dots, t_n) \mid t_i\in k,\, t_1 \cdots t_n = 1\},
$$
$U$ the unipotent upper triangular matrices and $U^-$ the unipotent
lower triangular matrices. The canonical map
$$
\pi:U^-\rightarrow \pi(U^-)\subset G/B
$$
identifies $U^-$ with an (affine) open subset of $G/B$, since $U^-\cap B =
\{e\}$.  This open subset is isomorphic to affine $n(n-1)/2$ -- space.

Furthermore, $B = T U$ and $X(B) = X(T)$, where $X(B)$ denotes the
one-dimensional representations (characters) of $B$.  Let
$\epsilon_i(t) = t_i$ for $t\in T$ and $\omega_i = \epsilon_1 + \cdots
+ \epsilon_i$ for $1\leq i \leq n-1$ be characters of $T$. Then $X(T)$
is a free abelian group of rank $n-1$ with basis $\omega_1, \dots,
\omega_{n-1}$. The canonical line bundle on $G/B$ can be identified
with $G\times^B (2\omega_1 + \cdots + 2\omega_{n-1})$.

In the next section we give an example showing the explicit nature of residual
normal crossings in constructing 
Frobenius splittings for $G/B$ vanishing with different multiplicities
on $B/B$.

\subsection{Frobenius splitting of $G/B$ by residual normal crossings}

For an $n\times n$-matrix $g\in G$ we let $ \delta_i(g)$ denote the
$i\times i$ minor from the lower left hand corner i.e. the minor
corresponding to the columns $\{1, \dots, i\}$ and rows $\{n, \dots,
n-i+1\}$ of $g$. Similarly we let $\delta'_i(g)$ denote the
(principal) $i\times i$ minor from the upper left hand corner i.e. the
minor corresponding to the columns $\{1, \dots, i\}$ and rows $\{1,
\dots, i\}$.

We let
$
\delta(g) = \delta_1(g) \cdots \delta_{n-1}(g)
$
and similarly
$
\delta'(g) = \delta'_1(g) \cdots \delta'_{n-1}(g).
$
Then $\delta, \delta'\in \Gamma(-\omega_1 - \cdots -
\omega_{n-1})$ and
\begin{equation}\label{cs}
s(g) = \delta(g) \delta'(g)
\end{equation}
is a section of the anticanonical line bundle.

\begin{Example}
  As a global section of the anticanonical line bundle \eqref{cs}
  identifies by \eqref{Gamma} with the regular function
  \begin{equation}\label{rf}
    f = x_{31} (x_{21} x_{32} - x_{31} x_{22})\,\, x_{11} (x_{11} x_{22} - x_{21} x_{12})\in k[\SL_3]
\end{equation}
for
$$
g = 
\begin{pmatrix}
x_{11} & x_{12} & x_{13}\\
x_{21} & x_{22} & x_{23}\\
x_{31} & x_{32} & x_{33}
\end{pmatrix}\in \SL_3
$$
and restricts to the function 
\begin{equation}\label{rfum}
x_{31} (x_{21} x_{32} - x_{31})
\end{equation}
on
$U^-$. This polynomial has residual normal crossing with respect to
$x_{31}, x_{21}, x_{32}$  proving that $f^{p-1}$
defines a Frobenius splitting of $\SL_3/B$ by Proposition \ref{Prop:F}\eqref{Prop:Fiv} and Proposition \ref{Prop:rnc}.

However, \eqref{rfum} does not vanish with maximal multiplicity on the
point $B/B$, since $(x_{21} x_{32} - x_{31})\not\in (x_{21}, x_{32},
x_{31})^2$. There is, however, a section with this maximal vanishing property:
$$
s = x_{21} x_{31} (x_{11} x_{32} - x_{31} x_{12})(x_{11} x_{22} - x_{21}
x_{12}),
$$
Specializing, it follows that $s$ restricts to the (residual) normal
crossing polynomial
$$
x_{21} x_{31} x_{32}
$$
on $U^-$. This idea can be generalized from $\SL_3$ to $\SL_n$ for $n>
3$. See \cite{LakshmibaiMehtaParameswaran1998} for this and a standard
monomial approach to constructing Frobenius splittings of maximal
multiplicity.
\end{Example}

\subsection{Kempf varieties}\label{Kempf}

In \cite{Kempf1976}, Kempf inspired many subsequent developments in
algebraic groups proving his celebrated vanishing theorem first for
the general linear group. Kempf considered a very natural class of
(smooth) Schubert varieties as stepping stones in an inductive proof.
Here we review the definition of these Schubert varieties from
\cite{Kempf1976}.

We let 
$$
A = \{(a_1, \dots, a_n)\in \NN^n \mid a_1 \geq a_2 \geq \cdots \geq a_n = 0,\,
n - a_j \geq j\ \text{for}\ j=1, \dots, n\}.
$$
For $a\in A$ we let $M(a)$ denote the closed subset
of $G$ given by
$$
\Bigg\{
\begin{pmatrix} 
x_{11} & \cdots & x_{1n}\\
\vdots & \ddots & \vdots\\
x_{n1} & \cdots & x_{nn}
\end{pmatrix}\in G
\Bigg|\, x_{ij} = 0\ \text{for}\ i > n - a_j\Bigg\}.
$$
This subset is $B\times B$-stable, as it is stable with respect to row
operations adding a multiple of a higher index row to a lower index
one and similarly adding a multiple of a lower index column to a
higher index one.  The Schubert variety $K(a)=\pi(M(a))\subset G/B$ is
called a \textit{Kempf variety} (see also
\cite{Lakshmibai1976}). Notice that $K((0, \dots, 0)) = G/B$ and
$K((n-1, n-2, \dots, 1, 0)) = B/B$. The codimension of $K(a)$ is $a_1+
\cdots a_n$. In particular the unique codimension one Kempf variety is
given by the vanishing of the lower left hand corner i.e.~$a = (1, 0,
\dots, 0)$. Kempf varieties are smooth as $U^-\cap K(a)$ is a linear
subspace of $U^-\cong \AA^{n(n-1)/2}$ and $U^-\cap K(a)$ is an open
subset of $K(a)$ containing $B/B$.

\begin{Example}
  The Kempf varieties corresponding to $(1, 0, 0, 0), (2, 1, 0, 0)$
  and $(2, 1, 1, 0)$ in $G = \SL_4$ are depicted below.
$$
\begin{pmatrix}
* & * & * & *\\
* & * & * & *\\
* & * & * & *\\
0 & * & * & *
\end{pmatrix}
\qquad
\begin{pmatrix}
* & * & * & *\\
* & * & * & *\\
0 & * & * & *\\
0 & 0 & * & *
\end{pmatrix}
\qquad
\begin{pmatrix}
* & * & * & *\\
* & * & * & *\\
0 & * & * & *\\
0 & 0 & 0 & *
\end{pmatrix}.
$$
Informally a placement of a lower triangular zero implies zeros below and to the left of
the zero.
\end{Example}

\subsubsection{Rectangular Kempf varieties}\label{RK}

Every Kempf variety arises as the scheme-theoretic intersection of
distinguished Kempf varieties, which we call \textit{rectangular Kempf
  varieties}. A rectangular Kempf variety $K(r)$ of height $t\leq n-1$
is given by
$$
r\in \{a\in A\setminus\{0\}\mid a_i\in \{0, t\}\ \text{for}\ i = 1, \dots, n\}.
$$
The \textit{width} of a Kempf variety $K(r)$ is the number of non-zero
entries in $r$.

\begin{Example}
  The rectangular Kempf varieties of heights one and two for $\SL_4$
  are depicted below: \medskip
$$
\begin{pmatrix}
* & * & * & *\\
* & * & * & *\\
* & * & * & *\\
0 & * & * & *
\end{pmatrix},\quad
\begin{pmatrix}
* & * & * & *\\
* & * & * & *\\
* & * & * & *\\
0 & 0 & * & *
\end{pmatrix},\quad
\begin{pmatrix}
* & * & * & *\\
* & * & * & *\\
* & * & * & *\\
0 & 0 & 0 & *
\end{pmatrix},\quad
\begin{pmatrix}
* & * & * & *\\
* & * & * & *\\
0 & * & * & *\\
0 & * & * & *
\end{pmatrix},\quad
\begin{pmatrix}
* & * & * & *\\
* & * & * & *\\
0 & 0 & * & *\\
0 & 0 & * & *
\end{pmatrix}
$$
\medskip 

They correspond to the defining vectors $(1,0,0,0),\,(1,1,0,0),\,
(1,1,1,0),\, (2,0,0,0), (2,2,0,0)$ and widths $1, 2, 3, 1, 2$
respectively.
\end{Example}

\begin{Lemma}\label{Lemma:RK}
  For $G = \SL_n$ there are $n(n-1)/2$ rectangular Kempf
  varieties. Every Kempf variety is the scheme-theoretic intersection of rectangular
  Kempf varieties.
\end{Lemma}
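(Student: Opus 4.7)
The first statement is a direct count. Any rectangular $r\in A$ of height $t\geq 1$ must, by the weak-decrease condition, be of the form $(t,\dots,t,0,\dots,0)$ with some width $w\geq 1$; the constraint $n-r_j\geq j$ is automatic for $j>w$ and specializes at $j=w$ to $t+w\leq n$. Hence rectangular Kempf varieties are in bijection with pairs $(t,w)$ of positive integers satisfying $t+w\leq n$, and there are $\sum_{t=1}^{n-1}(n-t)=n(n-1)/2$ such pairs.

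For the decomposition, I would associate to any $a\in A$ the level-set vectors $r^{(t)}$, defined for each $t$ with $1\leq t\leq a_1$ by $r^{(t)}_j:=t$ when $a_j\geq t$ and $r^{(t)}_j:=0$ otherwise. Setting $w_t:=\#\{j:a_j\geq t\}$, this vector is rectangular of height $t$ and width $w_t$; the required inequality $t+w_t\leq n$ follows from $a_{w_t}\geq t$ together with the given bound $n-a_{w_t}\geq w_t$. The crux of the proof is the ideal identity
\[
I_a \;=\; (x_{ij}:i>n-a_j) \;=\; \sum_{t=1}^{a_1}(x_{ij}:j\leq w_t,\ i>n-t) \;=\; \sum_{t=1}^{a_1} I_{r^{(t)}}
\]
in $k[G]$. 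Both inclusions are direct: a generator $x_{ij}$ of $I_a$ has $a_j\geq 1$, and taking $t:=a_j$ puts it in $I_{r^{(t)}}$ (since $j\leq w_t$ and $i>n-t$); conversely, any generator of $I_{r^{(t)}}$ has $a_j\geq t$, so $i>n-t\geq n-a_j$ places it in $I_a$.

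Finally, I would descend from $G$ to $G/B$. The map $\pi:G\rightarrow G/B$ is a locally trivial principal $B$-bundle and hence faithfully flat and smooth; since each $M(a)$ is right-$B$-stable and smooth (Kempf varieties being smooth), the identities $\pi^{-1}(K(a))=M(a)$ and $\pi^{-1}(K(r^{(t)}))=M(r^{(t)})$ hold scheme-theoretically, with defining ideals $I_a$ and $I_{r^{(t)}}$ respectively (the fact that $I_a$ is radical follows from a regular-sequence/dimension count, since its $|a|$ generators together with $\det-1$ form a regular sequence in $k[x_{ij}]$ whose vanishing locus is generically smooth). The ideal identity above is then the $\pi$-pullback of $\cI_{K(a)}=\sum_t\cI_{K(r^{(t)})}$, and faithfully flat descent lets us read this identity back on $G/B$, giving the scheme-theoretic equality $K(a)=\bigcap_{t=1}^{a_1}K(r^{(t)})$. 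The one step requiring genuine bookkeeping is the ideal identity in the middle paragraph; the count and the descent are formal.
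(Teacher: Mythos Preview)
Your proof is correct, and there is in fact nothing to compare it to: the paper states Lemma~\ref{Lemma:RK} without proof, evidently regarding both assertions as elementary. Your argument supplies what the paper omits.

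A few remarks on the execution. The counting argument and the level-set decomposition $a\mapsto (r^{(1)},\dots,r^{(a_1)})$ are exactly right, and the monomial-ideal identity $I_a=\sum_t I_{r^{(t)}}$ is the heart of the matter. Your descent step through $G$ is also sound, but the justification that $I_a\,k[G]$ is radical can be made slightly cleaner than ``regular sequence plus generically smooth'': in $R'=k[x_{ij}]/I_a$ (a polynomial ring in the surviving variables) one checks directly via cofactor expansion that the hypersurface $\{\det'=1\}$ is smooth everywhere, since if all partials $\partial\det'/\partial x_{ij}=C_{ij}$ (the $(i,j)$-cofactors, for $(i,j)$ outside the zero pattern) vanished at a point, then expanding $\det'$ along any row would give $\det'=0\neq 1$. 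This avoids invoking Serre's criterion and the separate irreducibility input.

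One could be tempted to argue only on the open cell $U^-$, where $K(a)\cap U^-$ is a coordinate linear subspace and the ideal identity is trivial; the paper's own emphasis on $U^-$ might suggest this. But passing from $U^-$ to all of $G/B$ still needs a globalizing step (to rule out, say, embedded components of the scheme-theoretic intersection supported outside $U^-$), and your faithfully flat descent via $\pi:G\to G/B$ handles this cleanly. So your route is not over-engineered; it is the right level of care for a scheme-theoretic claim.
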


\section{Matrix calculations}

In this section we outline the rather explicit linear algebra which is
the basis of our diagonal Frobenius splitting of $\SL_n/B\times
\SL_n/B$. 

We let $\delta_i(M)$ denote the $i\times i$ minor
from the lower left hand corner in a matrix $M$.
For two $n\times n$ matrices 
$$
g =
\begin{pmatrix}
x_{11} & \cdots & x_{1n}\\
\vdots & \ddots & \vdots\\
x_{n1} & \cdots & x_{nn}
\end{pmatrix}\qquad\text{and}
\qquad h = 
\begin{pmatrix}
y_{11} & \cdots & y_{1n}\\
\vdots & \ddots & \vdots\\
y_{n1} & \cdots & y_{nn}
\end{pmatrix}
$$
in $G$ we define
the 
$2n\times 2n$ matrix
$$
M(g,h) = 
\begin{pmatrix}
x_{n1} &  0 & x_{n2} & 0 & \cdots & x_{nn} & 0\\
\vdots &  \vdots & \vdots & \vdots & \ddots & \vdots & \vdots\\
x_{21} & 0 & x_{22} & 0 & \cdots & x_{2n} & 0\\
x_{11} & 0 & x_{12} & 0 & \cdots & x_{1n} & 0\\
x_{11} &  y_{11} & x_{12} & y_{12} & \cdots & x_{1n} & y_{1n}\\
x_{21} &  y_{21} & x_{22} & y_{22} & \cdots & x_{2n} & y_{2n}\\
\vdots &  \vdots & \vdots & \vdots & \ddots & \vdots & \vdots\\

x_{n1} &  y_{n1} & x_{n2} & y_{n2} & \cdots & x_{nn} & y_{nn}
\end{pmatrix}
$$
with determinant $\pm 1$. Notice that $\delta_i(M(g, h))$
is invariant under right
translation by $U\times U$ for $1\leq i \leq 2n$.
We are interested in 
the lower $n\times 2n$ submatrix 
$$
L(g, h)=\begin{pmatrix}
1 & 1 & 0 & 0 & \cdots & 0 & 0\\
x_{21} & y_{21} & 1 & 1 & \cdots & 0 & 0\\
\vdots & \vdots & \vdots & \vdots & \cdots & \vdots & \vdots\\
x_{n-1, 1} & y_{n-1, 1} & x_{n-1, 2} & y_{n-1, 2} & \cdots & 0 & 0\\
x_{n1} &  y_{n1} & x_{n2} & y_{n2} & \cdots & 1 & 1
\end{pmatrix}
$$
of $M(g, h)$ for $g, h\in U^-$.

\begin{Definition}\label{DefOdd} The following definitions are necessary to introduce our Frobenius splitting.
\begin{enumerate}[(i)]
\item
For $1 \leq i \leq n$ we define $L_i(g, h) = \delta_i(L(g,h))$.
\item
When $n \leq i \leq 2n+1$ we define $L_i(g,h)$ to be the 
$(2n-i) \times (2n-i)$-submatrix of $L(g,h)$ obtained by 
deleting the first $2(i-n)$ columns and the first $(i-n)$ 
rows from the first $i$ columns of $L(g,h)$. 
\item
For $1\leq i \leq 2n-1$, we let
$V_i$ denote the variables in the diagonal of $L_i$, 
$M_i$ the monomial ideal
generated by them and
$m_i$ the
monomial given by their product. For $i=0$, we define $V_0=\emptyset$ and $M_0 = (0)$.
\end{enumerate}
\end{Definition}

The reader is advised to study the following example illustrating
these definitions.

\begin{Example}
For $G = \SL_4$ and $g, h\in U^-$,
$$
L(g, h)=\begin{pmatrix}
1 & 1 & 0 & 0 & 0 & 0 & 0 & 0\\
x_{21} & y_{21} & 1 & 1& 0 & 0 & 0 & 0\\
x_{31} & y_{31} & x_{32} & y_{32} & 1 & 1 & 0 & 0\\
x_{41} & y_{41} & x_{42} & y_{42} & x_{43} & y_{43} & 1 & 1
\end{pmatrix}.
$$
Here
$$
L_1 = 
\begin{pmatrix}
x_{41}
\end{pmatrix},\quad
L_2 = 
\begin{pmatrix}
x_{31} & y_{31}\\
x_{41} & y_{41}
\end{pmatrix},
\quad
L_3 = 
\begin{pmatrix}
x_{21} & y_{21} &  1\\
x_{31} & y_{31} & x_{32}\\
x_{41} & y_{41} & x_{42}
\end{pmatrix},
\quad
L_4 =
\begin{pmatrix}
1 & 1 & 0 & 0\\
x_{21} & y_{21} &  1 & 1\\
x_{31} & y_{31} & x_{32} & y_{32}\\
x_{41} & y_{41} & x_{42} & y_{42}
\end{pmatrix}
$$
and
$$
L_5 = 
\begin{pmatrix}
 1 & 1 & 0\\
x_{32} & y_{32} & 1\\
x_{42} & y_{42} & x_{43}
\end{pmatrix},
\quad
L_6 = 
\begin{pmatrix}
 1 & 1\\
 x_{43} & y_{43}
\end{pmatrix},
\quad
L_7 =
\begin{pmatrix}
1      \\

\end{pmatrix}.
$$
\medskip

Notice that 
\begin{align*}
V_1 &= \{x_{41}\}\\
V_2 &= \{x_{31}, y_{41}\}\\
V_3 &= \{x_{21}, y_{31}, x_{42}\}\\
V_4 &= \{y_{21}, x_{32}, y_{42}\}\\
V_5 &= \{y_{32}, x_{43}\}\\
V_6 &= \{y_{43}\}\\
V_7 &= \emptyset
\end{align*}
and that 
$$
\det L_i \equiv m_i \mod M_1 + \cdots + M_{i-1}
$$
for $i = 1, \dots, 7$. Notice also that the columns in $L_i$ are 
pairwise identical in the set of  variables $\{ x_{ij} \}$ and $\{ y_{ij} \}$. This 
ensures that the determinants of the $L_i$'s will vanish with 
high multiplicity on the diagonal in $U^-\times U^-$.

\end{Example}

To prepare for showing that $\delta(M(g,h))^{p-1}$ is a Frobenius
splitting section of the anticanonical bundle on $G/B\times G/B$
we need the following result when restricting to the open affine
subset $U^-\times U^-$.

\begin{Proposition}\label{Prop:fs}
For $1 \leq i \leq 2n-1$ and $g, h\in U^-$, we have
\begin{enumerate}[(i)]

\item\label{itemid}
\begin{equation}\label{congrM}
\delta_i(M(g, h)) = \det L_i(g, h) 
\end{equation}
\item\label{itemdiagmult}
$$
\det L_i(g, h)\in I_\Delta^{\mu(i)},
$$
where $I_\Delta \subseteq k[U^-\times U^-]$ is the ideal defining the
diagonal,
$$
\mu(i)=\min\Bigg(\Bigg\lfloor\frac{i}{2}\Bigg\rfloor, \, 
\Bigg\lfloor\frac{2 n - i}{2}\Bigg\rfloor\Bigg)
$$
and $\lfloor x \rfloor$ denotes the largest integer $\leq x$
\item\label{itemvars}
$$
V_1 \cup \cdots \cup V_{2n-1} = \{x_{n1}, x_{n-1, 1}, y_{n1}, \dots,
x_{n, n-1}, y_{n, n-1}\}.
$$
\item\label{itemmon}
$$
\det L_i(g, h) \equiv m_i \mod M_1 + \cdots + M_{i-1}.
$$

\end{enumerate}
\end{Proposition}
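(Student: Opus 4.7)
The plan is to verify the four parts of Proposition \ref{Prop:fs} in turn by direct linear algebra on the block structures of $M(g,h)$ and $L(g,h)$. The technical core for (iii) and (iv) is a single index formula identifying which $V_{i'}$ houses each entry of $L_i$; part (i) requires a Laplace expansion and part (ii) an elementary column operation.

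For (i), the case $i \leq n$ is tautological: by the interleaved construction the lower-left $i\times i$ submatrix of $M(g,h)$ is already the lower-left $i\times i$ submatrix of $L(g,h)$. For $i > n$ the submatrix includes $i-n$ extra rows from the top half of $M(g,h)$, which for $g\in U^-$ are sparse: they vanish in every even column and, by lower-triangularity, in odd columns beyond a specific anti-diagonal of $1$'s. A Laplace expansion along these rows isolates the unique nonzero term, coming from the column subset $S = \{1, 3, \dots, 2(i-n)-1\}$ with cofactor $\pm 1$. The complementary minor uses the first $i$ columns of $L$ minus $S$; within this, the top $i-n$ rows of $L$ (using $h\in U^-$) form a second upper-triangular block with $1$'s on the diagonal along the even columns $\{2, 4, \dots, 2(i-n)\}$, contributing another $\pm 1$ and leaving exactly $L_i(g,h)$.

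For (ii), the columns $2j-1$ and $2j$ of $L$ are twin columns coming from $g$ and $h$ respectively, with identical zero and $1$ patterns. Subtracting one from the other leaves a column whose $k$-th entry equals $y_{kj}-x_{kj}$ for $k>j$ and $0$ otherwise, and hence lies in $I_\Delta$. A direct count shows $L_i$ contains exactly $\mu(i)$ such complete pairs: $\lfloor i/2\rfloor$ when $i\leq n$, and $\lfloor(2n-i)/2\rfloor$ when $i>n$ (because the first retained column $2(i-n)+1$ is odd, again starting a complete pair). The operations leave $\det L_i$ unchanged while producing $\mu(i)$ columns lying entirely in $I_\Delta$, so $\det L_i \in I_\Delta^{\mu(i)}$.

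For (iii) and (iv), the entry $L_i[k,l]$ — which equals $L[n-i+k, l]$ if $i\leq n$ and $L[i-n+k, 2(i-n)+l]$ if $i>n$ — is either $0$, $1$, or a single variable, and a short parity check on $l$ shows that variable lies in $V_{i-k+l}$. Specializing $k=l$ gives that $V_i$ is the set of non-unit diagonal entries of $L_i$; inverting the formula shows $x_{rc}$ (with $r>c$) lies in $V_{n-r+2c-1}$ and $y_{rc}$ lies in $V_{n-r+2c}$, and a cardinality count ($n(n-1)$ on each side) then yields the partition (iii). For (iv) the same formula shows that below-diagonal entries ($k>l$) lie in $V_{i'}$ with $i'<i$ and thus vanish modulo $M_1+\cdots+M_{i-1}$, while diagonal entries either lie in $V_i$ or equal $1$, and above-diagonal entries are untouched. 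The reduced matrix is upper triangular, so its determinant is the product of diagonal entries, which is exactly $m_i$. The main obstacle will be the sign bookkeeping in (i): the two successive Laplace strip-away factors must multiply to $+1$ rather than $-1$ for the stated equality (as opposed to equality up to sign) to hold.
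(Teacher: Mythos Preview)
Your approach is correct and essentially matches the paper's. For (ii)--(iv) the arguments coincide: the paper also subtracts paired columns for (ii), and for (iii)--(iv) it records that $V_i$ consists of the variables on the $i$-th ``diagonal'' of $L(g,h)$ and that any non-diagonal term in $\det L_i$ picks up a sub-diagonal entry lying in some earlier $V_{i'}$. Your index formula $L_i[k,l]\in V_{i-k+l}$ is simply the explicit version of this bookkeeping.

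For (i) with $i>n$, the paper collapses your two Laplace steps into one: it notes that the top $2(i-n)$ rows of the lower-left $i\times i$ block of $M(g,h)$ --- the $i-n$ rows from the flipped upper half together with the first $i-n$ rows of $L$ --- are, after a row permutation, lower triangular unipotent in the first $2(i-n)$ columns, so one deletes those rows and columns simultaneously. Your sign worry is well placed (the paper is also silent on it), but it is routine: in your two-step expansion, the unique column set in the first Laplace step is $S=\{1,3,\dots,2(i-n)-1\}$, contributing a sign $(-1)^{\binom{q+1}{2}+q^2}$ with $q=i-n$; the first minor is anti-triangular unipotent, contributing $(-1)^{\binom{q}{2}}$; and the second step is a genuine block-triangular reduction with unipotent top block, contributing no sign. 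The total exponent is $\binom{q+1}{2}+q^2+\binom{q}{2}=2q^2$, so the overall sign is $+1$ and the equality holds on the nose.
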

\begin{proof}
For $1\leq i \leq n$, \eqref{congrM}
  is clear.  When $i>n$ and $g, h \in U^-$ the $i \times i$-submatrix
of $M(g,h)$ in the lower left hand corner will have a lower triangular 
unipotent structure in the top $2(i-n)$ rows (up to row permutation of these
rows). In particular, when computing  the determinant $\delta_i(M(g, h))$ 
one might as well start by deleting the first $2(i-n)$ columns and rows. 
The connection with ${\rm det}(L_i(g,h))$ is then clear.

  The proof of \eqref{itemdiagmult} follows from pairwise subtraction
  of columns, before computing the determinant, using the fact that
  $\mu(i)$ is  the number
  of identical $x$-columns and $y$-columns in $L_i(g, h)$.

Let 
$$
\Delta_r(g, h) = 
\begin{cases}
\{L(g, h)_{ij} \mid i-j = n-r\} &\text{for}\quad r = 1, \dots, n\\
 \{L(g, h)_{ij} \mid j-i = n-r\} &\text{for}\quad r = n+1, \dots, 2n-1
\end{cases}
$$
denote the $2n-1$ ``diagonals'' in $L(g, h)$
  starting with the lower left hand corner. Then \eqref{itemvars}
follows from the fact that
  $V_i$ picks up the variables in $\Delta_i(g, h)$ for $i = 1, \dots, 2n - 1$. 

  In evaluating the determinant of $L_i(g, h)$, a term different from
  the product of the diagonal elements always involves a variable in
  $V_1\cup \cdots \cup V_{i-1}$ for $i = 1, \dots, 2n-1$. This implies
  \eqref{itemmon}.
\end{proof}

\section{The diagonal Frobenius splitting on  $\SL_n/B\times \SL_n/B$}

The following simple lemma is the fundamental tool for showing compatible
splitting for Kempf varieties.

\begin{Lemma}\label{Lemma:monideal}
Let $f, g\in k[x_{m+1}, \dots, x_n]$ be relatively prime polynomials. Then
$$
(x_1, \dots, x_m, f g) = (x_1, \dots, x_m, f)\cap (x_1, \dots, x_m, g)
$$
in $k[x_1, \dots, x_n]$.
\end{Lemma}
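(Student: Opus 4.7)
The plan is to reduce the statement modulo the monomial part of the ideal so that the coprimality hypothesis on $f$ and $g$ becomes directly usable in the polynomial ring $k[x_{m+1},\dots,x_n]$, which is a UFD.

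The inclusion $(x_1,\dots,x_m, fg)\subseteq (x_1,\dots,x_m,f)\cap (x_1,\dots,x_m,g)$ is immediate since $fg\in (f)\cap (g)$, so both generating sets of the left-hand ideal lie in each factor of the intersection. I would dispatch this in one line.

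For the reverse inclusion, take $h$ in the intersection and pass to the quotient
$$
R = k[x_1,\dots,x_n]/(x_1,\dots,x_m)\cong k[x_{m+1},\dots,x_n].
$$
Writing $\bar{\ \cdot\ }$ for the image in $R$, the assumption forces $\bar h\in (\bar f)\cap (\bar g)$ inside $R$. Since $R$ is a UFD and $f,g$ are relatively prime in $k[x_{m+1},\dots,x_n]$, we have $(\bar f)\cap(\bar g)=(\bar f\bar g)$, so $\bar h = \bar c\cdot \bar f\bar g$ for some $c\in k[x_{m+1},\dots,x_n]$. Lifting this equality back to $k[x_1,\dots,x_n]$ yields $h - c\cdot fg\in (x_1,\dots,x_m)$, which places $h$ in $(x_1,\dots,x_m,fg)$, as required.

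The only nontrivial ingredient is the fact that for coprime elements in a UFD one has $(\bar f)\cap(\bar g)=(\bar f\bar g)$; I would not expect this to be the main obstacle, since it is standard (from $\bar f\bar g\mid \bar f\bar h$ together with $\gcd(\bar f,\bar g)=1$ one gets $\bar f\bar g\mid\bar h$). The only mild subtlety to watch is that the quotient map identifies $(x_1,\dots,x_m,f)$ with $(\bar f)$ and similarly for $g$, i.e.\ that the intersection commutes with reduction modulo $(x_1,\dots,x_m)$ on these particular ideals; this is immediate because every element of $(x_1,\dots,x_m,f)$ has the form $q + rf$ with $q\in(x_1,\dots,x_m)$, so its reduction is $\bar r\bar f\in (\bar f)$, and conversely every preimage of $(\bar f)$ can be written that way.
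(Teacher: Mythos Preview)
Your proof is correct. The paper states this lemma without proof, treating it as an elementary fact, so there is nothing to compare; your argument via reduction modulo $(x_1,\dots,x_m)$ and the UFD property of $k[x_{m+1},\dots,x_n]$ is exactly the natural way to justify it.
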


To get an initial grasp of our diagonal Frobenius splitting, the reader
is encouraged to look at the following example.

\begin{Example}\label{Idea}
For $G = \SL_3$ and $g, h\in U^-$, $f:=\delta(M(g, h))$ is
$$
f = 
\delta\Bigg(
\begin{pmatrix}
x_{31} & 0 & x_{32} & 0 & 1 & 0\\
x_{21} & 0 & 1 & 0 & 0 & 0\\
1      & 0  & 0 & 0 & 0 & 0\\
1 & 1 & 0 & 0 & 0 & 0\\
x_{21} & y_{21} & 1 & 1 & 0 & 0\\
x_{31} & y_{31} & x_{32} & y_{32} & 1 & 1
\end{pmatrix}
\Bigg)
$$
Here 
\begin{align*}
f &= \det L_1(g, h) \det L_2(g,h) \det L_3(g, h) \det L_4(g, h)\\
&= x_{31} (x_{21}y_{31} - x_{31}y_{21})
(y_{21} x_{32} - y_{31} - x_{21} x_{32} + x_{31}) (y_{32} - x_{32})
\end{align*}
and $f\in k[x_{31}, x_{21}, y_{31}, y_{21}, x_{32}, y_{32}]$ has
residual normal crossing. Furthermore $f$ vanishes with multiplicity
three on the diagonal $V(y_{31} - x_{31}, y_{32} - x_{32}, y_{21} -
x_{21})$ as $\mu(1) + \mu(2) + \mu(3) + \mu(4) = 0 + 1 + 1 + 1 = 3$
(cf.~Proposition \ref{Prop:fs}\eqref{itemdiagmult}).  Therefore
$f^{p-1}$ is a Frobenius splitting of $\SL_3/B\times \SL_3/B$ by
Proposition \ref{Prop:rnc}
vanishing with maximal multiplicity on the diagonal. Lemma
\ref{Lemma:Z} and Proposition \ref{Prop:F}\eqref{Prop:Fi} show that
the ideals
$$
(x_{31}),\quad (x_{21}y_{31} - x_{31}y_{21}),\quad
(y_{21} x_{32} - y_{31} - x_{21} x_{32} + x_{31})\quad\text{and}\quad
(y_{32} - x_{32})
$$
are compatibly split by $f^{p-1}$. Consequently
$$
(x_{31}, x_{21} y_{31})  = (x_{31}) + (x_{21}y_{31}-x_{31}y_{21})
$$
is compatibly split by Proposition
\ref{Prop:F}\eqref{Prop:Fii} and 
$$
(x_{31}, x_{21}),\quad (x_{31}, y_{31})
$$
are compatibly split by Lemma \ref{Lemma:monideal}. Similarly
\begin{align*}
&(x_{31}, y_{31}, x_{21}) + (y_{21} x_{32} - y_{31} - x_{21} x_{32} + x_{31})=\\
&(x_{31}, y_{31}, x_{21}, y_{21} x_{32})=\\
&(x_{31}, y_{31}, x_{21}, y_{21})\cap (x_{31}, y_{31}, x_{21}, x_{32})
\end{align*}
showing that $(x_{31}, y_{31}, x_{21}, y_{21})$ is compatibly
split. Along the same lines we get that
\begin{align*}
&(x_{31}, y_{31}) + (y_{21} x_{32} - y_{31} - x_{21} x_{32} + x_{31})=\\
&(x_{31}, y_{31}, x_{32}(y_{21}-x_{21})) = \\
&(x_{31}, y_{31}, x_{32})\cap (x_{31}, y_{31}, y_{21}-x_{21})
\end{align*}
and $(x_{31}, y_{31}, x_{32})$ is compatibly split showing that
$$
(x_{31}, y_{31}, x_{32})+(y_{32}-x_{32}) = (x_{31}, y_{31}, x_{32}, y_{32})
$$
is compatibly split. 

We have verified that $X\times X\subset \SL_3/B\times \SL_3/B$ is
compatibly split, where $X$ is any rectangular Kempf variety.
\end{Example}

With this example in mind, we  state and prove our main result.

\begin{Theorem}
\label{thm}
For $g, h\in U^-\subset SL_n$, let
$$
f = \delta(M(g, h))\in k[U^-\times U^-]\cong k[V_1\cup \cdots \cup V_{2n-1}]. 
$$ 
Then 
\begin{enumerate}[(i)]

\item\label{rncd1} $f$ is a residual normal crossing polynomial when
  the variables are ordered respecting $V_1, V_2, \dots, V_{2n-1}$:
  if $x\in V_i$ and $y\in V_j$ are variables and $i < j$, then $x$ must precede $y$ in the ordering of the variables.
\item\label{mult1}
$f$ vanishes with multiplicity $\geq n(n-1)/2$ on the diagonal $\Delta_{U^-}$.
\item\label{Fs} Let $\omega$ denote the canonical line bundle on
  $\SL_n/B\times\SL_n/B$. Then
$$
\delta(M(g,h))^{p-1}\in k[G\times G]
$$
is a Frobenius splitting section of $\omega^{1-p}$ vanishing with
maximal multiplicity on $\Delta_{G/B}$ compatibly splitting $X\times
X$, where $X$ is a Kempf variety.
\end{enumerate}
\end{Theorem}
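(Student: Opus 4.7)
The plan is to tackle (i), (ii), (iii) in order, with the first two feeding into the last via the splitting toolkit already established. Throughout I work on the affine open $U^-\times U^-\subset G/B\times G/B$ and extend by density.

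For (i), I would show that $m_1m_2\cdots m_{2n-1}$ is the unique lex-minimum monomial appearing in the polynomial expansion of $f=\prod_{i=1}^{2n-1}\det L_i$, where the lex order is induced by any variable ordering respecting $V_1,V_2,\dots,V_{2n-1}$ (so that $V_j$-variables precede $V_k$-variables for $j<k$). By Proposition \ref{Prop:fs}\eqref{itemmon}, write $\det L_i=m_i+r_i$ with $r_i\in M_1+\cdots+M_{i-1}$. Any monomial of $f$ other than $\prod m_i$ arises by choosing, for some $i$, a monomial summand of $r_i$ in place of $m_i$; this summand necessarily carries an extra variable from $V_1\cup\cdots\cup V_{i-1}$. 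Comparing variable-by-variable from the largest $V_1$-variable downward, either a previously saturated $V_{j'}$-exponent ($j'<i$) gets bumped up, or we first see a discrepancy at some $V_j$ with $j<i$ where the competing monomial has a strictly greater exponent. In either case the competing monomial is lex-greater than $\prod m_i$. Proposition \ref{Prop:fs}\eqref{itemvars} identifies $\prod m_i$ with the product of all the $U^-\times U^-$ variables, so by the minimum-term characterization stated just before Proposition \ref{Prop:rnc}, $f$ has residual normal crossing in the specified ordering.

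For (ii), Proposition \ref{Prop:fs}\eqref{itemdiagmult} gives $f\in I_\Delta^{\sum_i\mu(i)}$, and a direct calculation (splitting $i\le n$ from $i>n$ and using the symmetry $i\leftrightarrow 2n-i$) shows $\sum_{i=1}^{2n-1}\mu(i)=n(n-1)/2$. For (iii), I would first verify that $\delta(M(g,h))$ is a global section of $\omega^{-1}$ on $G/B\times G/B$ by computing its $B\times B$-equivariance: after regrouping odd and even columns, the column transformation induced by $(b_1,b_2)$ becomes block-diagonal, and each $\delta_i(M(g,h))$ acquires a computable product of $\omega$-characters. Summing across $i=1,\dots,2n-1$ (using $\omega_0=\omega_n=0$ in $X(T)$) yields exactly $2(\omega_1+\cdots+\omega_{n-1})$ in each factor, matching $\omega^{-1}$. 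Combining (i), Proposition \ref{Prop:F}\eqref{Prop:Fiv} and Proposition \ref{Prop:rnc} then shows that $\partial(\delta(M(g,h))^{p-1})$ is a Frobenius splitting of $G/B\times G/B$. Pairing the lower bound from (ii) with the upper bound in Lemma \ref{Lemma:LMP} forces the vanishing multiplicity along the codimension $n(n-1)/2$ diagonal $\Delta_{G/B}$ to be exactly $(p-1)n(n-1)/2$, and the same lemma concludes that $\Delta_{G/B}$ is maximally compatibly split.

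The hard part is the compatible splitting of $X\times X$ for every Kempf $X$. By Lemma \ref{Lemma:RK} every Kempf variety is a scheme-theoretic intersection of rectangular Kempf varieties, so by Proposition \ref{Prop:F}\eqref{Prop:Fii} it suffices to handle the rectangular case. For rectangular $X$, my plan generalizes the bookkeeping of Example \ref{Idea}: each factor $\det L_i$ of $f$ cuts out a $B\times B$-stable divisor in $G/B\times G/B$ whose defining ideal becomes compatibly split by Lemma \ref{Lemma:Z} together with Proposition \ref{Prop:F}\eqref{Prop:Fi}. These principal ideals are then combined inductively using Proposition \ref{Prop:F}\eqref{Prop:Fii} (to form sums) and Lemma \ref{Lemma:monideal} (to extract the components of a factored ideal), producing an ever-growing collection of compatibly split ideals until the ideal of $X\times X$ is reached. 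The obstacle is organizational rather than conceptual: one must prescribe, uniformly in $n$ and for each rectangular Kempf $X$, a precise sequence of sums and factorizations that terminates at $X\times X$. I expect this can be done by exploiting the paired $x$-column/$y$-column pattern in $L(g,h)$ together with the block structure of rectangular Kempf ideals, but tracking that the extracted components of $\det L_i$ are indeed of the form required is the delicate combinatorial core of the argument.
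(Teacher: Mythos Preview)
Your outline for (ii) and the splitting/maximal-multiplicity half of (iii) matches the paper's. For (i) you are on the same track, but two remarks: the paper only states that residual normal crossing \emph{implies} the lex-minimum monomial is $x_1\cdots x_n$, not the converse you invoke (the converse is true and elementary, but you should supply it); more to the point, Proposition~\ref{Prop:fs}\eqref{itemmon} already yields the \emph{recursive} RNC check without any lex bookkeeping. Setting $V_1\cup\cdots\cup V_{k-1}$ to zero collapses $\det L_k$ to $m_k$, so after dividing out $m_1,\ldots,m_{k-1}$ and killing their variables one is left with $m_k\cdot\prod_{i>k}(\det L_i)|_{V_1=\cdots=V_{k-1}=0}$, and one continues. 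This sidesteps both the converse and the non-cancellation of $\prod m_i$ that your lex argument leaves implicit.

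The genuine gap is exactly where you place it, and the missing device is a single clean induction rather than an ad hoc sequence of sums and factorizations. The paper inducts on $m$, proving that the monomial ideal generated by $W_m:=V_X\cap(V_1\cup\cdots\cup V_m)$ is compatibly split. The base case is $(x_{n1})$. For the step, the decisive observation is that setting the $W_m$-variables to zero makes $L_{m+1}$ block-triangular, so that
\[
(W_m,\det L_{m+1})=(W_m,D)
\]
for a single \emph{monomial} $D$; moreover $D$ factors as $d\cdot\prod(V_X\cap V_{m+1})$ with $d$ a monomial in variables outside $V_X$. The block shape is forced by the rectangular form of $X$: the $V_X$-entries of $L(g,h)$ fill a lower-left rectangle, and zeroing $W_m$ clears the sub-diagonal part of that rectangle inside $L_{m+1}$, leaving $\det\bigl(\begin{smallmatrix}A&B\\0&C\end{smallmatrix}\bigr)=\det A\det C$. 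Since $(\det L_{m+1})$ is compatibly split by Lemma~\ref{Lemma:Z} and Proposition~\ref{Prop:F}\eqref{Prop:Fi}, so is $(W_m,D)$ by Proposition~\ref{Prop:F}\eqref{Prop:Fii}, and Lemma~\ref{Lemma:monideal} then extracts $(W_{m+1})$. The induction terminates once $m\ge r+s-1$ (height plus width of $X$ minus one), since then $V_X\subset V_1\cup\cdots\cup V_m$. This is precisely the ``uniform sequence of sums and factorizations'' you were looking for.
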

\begin{proof}
Proposition \ref{Prop:fs}\eqref{itemmon} shows \eqref{rncd1}.
Since
$$
\sum_{i=1}^{2n-1}\, \mu(i) = \frac{n(n-1)}{2},
$$
Proposition \ref{Prop:fs}\eqref{itemid} and Proposition \ref{Prop:fs}\eqref{itemdiagmult}
imply $(ii)$.

Let us prove \eqref{Fs}. The regular function $\delta(M(g, h))\in
k[G\times G]$ is invariant under right translation by $U\times
U$. This amounts to observing that the column operations on $g$ and
$h$ coming from right multiplication by $U\times U$ do not change
$\delta_i(M(g, h))$ for $1\leq i \leq 2n-1$.

Define $\omega_0 = \omega_n = 0$. Then $\delta_{2i}(M(g, h))\in
\Gamma(-\omega_i, -\omega_i)$ and $\delta_{2i-1}(M(g,h))\in \Gamma(-\omega_i,
-\omega_{i-1})$ for $1\leq i \leq n$.  This shows that $\delta(M(g,
h))\in k[G\times G]$ is a section of the anticanonical line bundle
$G\times^B(2\omega_1 + \cdots + 2 \omega_n)$ on $G/B\times G/B$.  Now
\eqref{rncd1} and \eqref{mult1} show after restricting to $U^-\times
U^-$ that $\delta(M(g, h))^{p-1}$ is a Frobenius splitting vanishing
with (maximal) multiplicity $(p-1) n(n-1)/2$ on $\Delta_{G/B}$.  We
have silently applied Proposition \ref{Prop:F}\eqref{Prop:Fiv}, 
Proposition \ref{Prop:rnc} and the fact that vanishing multiplicity  
can be checked on an open subset (cf. Section \ref{mult-van})

It remains to show that $X\times X$ is compatibly split, where
$X\subset \SL_n/B$ is a Kempf variety. We can assume by Lemma
\ref{Lemma:RK} that $X$ is a rectangular Kempf variety (the argument works for
general Kempf varieties, but is slightly less clear).

Suppose that $X$ is of height $r$ and width $s$. Then  we must show that the monomial ideal
generated by the variables
$$
V_X=\Big\{x_{ij}, y_{ij}\, \Big|\, n-r < i \leq n,\, 1\leq j\leq s\Big\}
$$
is compatibly split under $f^{p-1}$. We will prove that the monomial
ideal generated by the variables
$$
V_X \cap (V_1 \cup \cdots \cup V_m)
$$
is compatibly split by induction on $m$. Since $V_X\cap
V_1 = \{x_{n1}\}$ and $x^{p-1}_{n1}$ is the first factor in $f^{p-1}$,
compatible splitting holds for $m=1$. Suppose now that the monomial ideal
generated by
$$
W:=V_X\cap (V_1 \cup \cdots \cup V_m)\subsetneq V_X
$$
is compatibly split. Then $(W, \delta_{m+1}(M(g, h))) = (W, D)$, where
$D$ is a monomial of the form $d m $, where $m$ is the product
of the variables $V_X\cap V_{m+1}$ and $d$ is a
monomial. This is a consequence of the formula
$$
\det
\begin{pmatrix}
A & B\\
0 & C
\end{pmatrix} =
\det(A) \det(C),
$$
where $A, B$ and $C$ are compatible block matrices.

It follows by
Lemma \ref{Lemma:monideal} that the ideal generated by
$$
V_X \cap (V_1 \cup \cdots \cup V_m\cup V_{m+1})
$$
is compatibly split. Since $V_X\subset V_1\cup \cdots \cup V_N$ for
$N\geq r+s-1$ the result follows.
\end{proof}

\section{Wahl's conjecture for Kempf varieties}\label{WahlKempf}

Let $Z$ denote a smooth projective variety. The sheaf of 
differentials on $Z$ is defined by 
$$ \Omega^1_Z = \mathcal I_{\Delta} / \mathcal I_{\Delta}^2,$$
where $\mathcal I_\Delta$ denotes the sheaf of ideals 
defining the diagonal within $Z \times Z$. In this setup
we may consider the   quotient 
morphism
$$ \mathcal I_\Delta \rightarrow  
\mathcal I_{\Delta} / \mathcal I_{\Delta}^2 = 
\Omega^1_Z.$$
Fixing line bundles $\mathcal L_1$ and $\mathcal L_2$
on $Z$ we obtain an induced  restriction 
morphism  
\begin{equation} 
\label{gauss}
{\rm H}^0\big(Z \times Z, \mathcal I_\Delta \otimes
(\mathcal L_1 \boxtimes \mathcal L_2) \big)
\rightarrow {\rm H}^0\big(Z ,\Omega^1_Z 
\otimes  
\mathcal L_1 \otimes \mathcal L_2 \big),
\end{equation} 
where $\cL_1\boxtimes \cL_2:=p_1^*\cL_1\otimes p_2^*\cL_2$ and $p_1, p_2: X\times X\rightarrow X$ are the projections on the
first and second factors.  In case $Z$ is a flag variety and $\mathcal
L_1$ and $\mathcal L_2$ are ample it has been conjectured by J. Wahl
\cite{Wahl1991} that the map (\ref{gauss}) is surjective. In
characteristic zero this is now a theorem proved by S. Kumar
\cite{Kumar1992}.  In positive characteristic only sporadic cases are
known as outlined in the introduction.

The aim of the last part of this paper is to obtain the following
related and seemingly stronger result

\begin{Theorem}
\label{conjecture}
Assume that the blow-up ${\rm Bl}_{\Delta}(Z \times Z)$ admits 
a Frobenius splitting which is compatible  with $E_Z$. 
Let $\mathcal L$ denote a very ample line bundle on $Z$ 
and let $\mathcal M_1$ and $\mathcal M_2$ denote globally 
generated line bundles on $Z$. Let $j>0$ denote an integer.
Then the natural map from
$${\rm H}^0\big(Z \times Z, \mathcal I_\Delta^{j} \otimes
((\mathcal L^j\otimes \mathcal M_1) \boxtimes (\mathcal L^j \otimes 
\mathcal M_2 ))\big),$$
to 
$$ {\rm H}^0\big(Z  , S^j \Omega^1_Z \otimes
\mathcal L^{2j} \otimes \mathcal M_1 
\otimes \mathcal M_2 \big),$$
induced by the identification 
$\mathcal I_\Delta^{j}/ \mathcal I_\Delta^{j+1}
 = S^j \Omega^1_Z$, is surjective.
\end{Theorem}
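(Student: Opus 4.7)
The plan is to lift the Gaussian-type map to the blow-up $\pi\colon B = \mathrm{Bl}_\Delta(Z\times Z)\to Z\times Z$ with exceptional divisor $E$ and identify both sides of the asserted surjection with restriction cohomology on $B$ and on $E$. Set $\mathcal L_i := \mathcal L^j\otimes \mathcal M_i$, which is ample since $\mathcal L^j$ is very ample and $\mathcal M_i$ is globally generated. Define
\[\mathcal F := \pi^*(\mathcal L_1\boxtimes \mathcal L_2)\otimes \mathcal O_B(-jE).\]
Since $\Delta$ is a smooth subvariety of the smooth $Z\times Z$, we have $\pi_*\mathcal O_B(-jE)=\mathcal I_\Delta^{\,j}$ and $R^i\pi_*\mathcal O_B(-jE)=0$ for $i>0$; the projection formula then gives
\[H^0(B,\mathcal F) \;=\; H^0\bigl(Z\times Z,\, \mathcal I_\Delta^{\,j}\otimes (\mathcal L_1\boxtimes \mathcal L_2)\bigr),\]
the source of the theorem's map.

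For the target, use $E\cong \mathbb P(\Omega^1_Z)$ (since the conormal bundle of $\Delta\subset Z\times Z$ is $\Omega^1_Z$ under $\Delta\cong Z$) and $\mathcal O_B(-E)|_E = \mathcal O_E(1)$. Letting $q\colon E\to Z$ be the bundle projection and $\pi|_E = \iota_\Delta\circ q$, we get
\[\mathcal F|_E \;=\; \mathcal O_E(j)\otimes q^*\bigl(\mathcal L^{2j}\otimes \mathcal M_1\otimes \mathcal M_2\bigr).\]
The identity $q_*\mathcal O_E(j) = S^j\Omega^1_Z$ (higher direct images vanish for $j\ge 0$) and the projection formula yield
\[H^0(E,\mathcal F|_E) \;=\; H^0\bigl(Z,\, S^j\Omega^1_Z\otimes \mathcal L^{2j}\otimes \mathcal M_1\otimes \mathcal M_2\bigr),\]
the target of the theorem's map. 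The restriction $H^0(B,\mathcal F)\to H^0(E,\mathcal F|_E)$ then matches the Gaussian-type map by naturality of these identifications, via the compatibility of the quotient $\mathcal I_\Delta^{\,j}\twoheadrightarrow \mathcal I_\Delta^{\,j}/\mathcal I_\Delta^{\,j+1} \cong S^j\Omega^1_Z$ with the passage to the blow-up.

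It remains to prove the restriction map is surjective, equivalently $H^1(B,\mathcal F(-E))=0$. The assumed Frobenius splitting of $B$ compatible with $E$, when tensored with $\mathcal F(-E)$ and iterated, yields for each $k\ge 1$ an injection
\[H^1\bigl(B,\mathcal F(-E)\bigr) \;\hookrightarrow\; H^1\bigl(B,(\mathcal F(-E))^{p^k}\bigr).\]
Pushing down to $Z\times Z$ via $R^{\ge 1}\pi_*\mathcal O_B(-mE)=0$, the right-hand side rewrites as $H^1\bigl(Z\times Z, \mathcal I_\Delta^{(j+1)p^k}\otimes (\mathcal L_1^{p^k}\boxtimes \mathcal L_2^{p^k})\bigr)$. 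Filtering $\mathcal O/\mathcal I_\Delta^{(j+1)p^k}$ by the graded pieces $S^r\Omega^1_Z\otimes \mathcal L^{2jp^k}\otimes \mathcal M_1^{p^k}\otimes \mathcal M_2^{p^k}$, and using Serre vanishing together with the long exact sequence for each graded step, one drives the $H^1$ to zero for $k$ sufficiently large.

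The main obstacle is this final vanishing. The symmetric-power index $r$ ranges up to $(j+1)p^k - 1$, comparable in size to $p^k$, while the available ample twist grows only linearly in $p^k$. Overcoming this gap requires a careful estimate bounding $H^1(Z, S^r\Omega^1_Z\otimes \mathcal A)$ in terms of $r$ and the ampleness of $\mathcal A$, with the positive-characteristic ingredient being that $Z$ is itself Frobenius split (inherited from the compatible splitting on $B$, since $\Delta$ is compatibly split in $Z\times Z$ and $\Delta\cong Z$). Once this uniform positivity is in place, the combination of Frobenius iteration, projection formula, and the filtration argument closes the proof.
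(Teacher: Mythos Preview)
Your reduction to the blow-up is correct: the identifications of source and target with $H^0(B,\mathcal F)$ and $H^0(E,\mathcal F|_E)$, and of the Gaussian map with the restriction map, are exactly the setup the paper uses. The problem is the last paragraph, which you yourself flag as ``the main obstacle'' and then do not resolve.

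The gap is real and not a matter of bookkeeping. After Frobenius iteration and push-down you need
\[
H^1\bigl(Z, S^r\Omega^1_Z\otimes \mathcal L^{2jp^k}\otimes \mathcal M_1^{p^k}\otimes \mathcal M_2^{p^k}\bigr)=0
\quad\text{for all } 0\le r\le (j+1)p^k-1.
\]
Since the ratio $r/(\text{ample power})$ approaches $(j+1)/(2j)>1/2$, you are effectively asking for positivity of $\Omega^1_Z$ twisted by a power of $\mathcal L$ strictly less than~$2$. Nothing in the hypotheses gives that: the only general positivity available is that $\mathcal O_E(1)\otimes q^*\mathcal L^2$ is nef (this is precisely the content of the Grassmannian identification \eqref{iso-line} in the paper), which is too weak by a definite margin. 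Frobenius splitting of $Z$ alone does not close this numerical gap; Serre vanishing cannot be made uniform over a family of sheaves whose ``size'' grows at the same rate as the twist.

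The paper avoids this trap by never pushing down to $Z\times Z$. Instead it exploits the very ample embedding $Z\subset\mathbb P(V)$ to produce a morphism $\tau_Z\colon B\to \mathrm{Gr}_2(V)$ under which $\mathcal O(-E)\otimes\pi^*(\mathcal L\boxtimes\mathcal L)$ is the pullback of $\mathcal O_{2,V}(1)$. Two separate vanishings are then proved. First, one uses the $(p-1)E$ twist coming from the \emph{compatible} splitting of $E$ (not just the plain splitting you use) to see that $\mathcal F^p\otimes\mathcal O((p-1)E)$ is the restriction of an \emph{ample} bundle on $Z\times Z\times\mathrm{Gr}_2(V)$, giving $H^i(B,\mathcal F)=0$ for $i>0$. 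Second, and this is the substantial part, the vanishing $H^i(E,\mathcal F|_E)=0$ is obtained by taking the Stein factorization of the map $E\to\mathrm{Gr}_2(V)\times\mathcal Z$ (with $\mathcal Z$ built from the $\mathcal M_i$), showing the first factor has trivially vanishing higher direct images (a $\mathbb P^1$-fibration argument), and that the relevant bundle becomes genuinely ample on the Stein target. The two vanishings together give $H^1(B,\mathcal F(-E))=0$.

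In short: your outline is fine up to the point where the actual work begins, but the ``careful estimate'' you invoke is essentially Proposition~\ref{prop1} of the paper, and its proof requires the Grassmannian/Stein-factorization geometry that your approach omits.
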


Notice that when $Z$ admits a minimal ample line bundle $\mathcal L$; 
i.e. an ample line bundle on $Z$ such that every line bundle of the 
form $\mathcal M \otimes \mathcal L^{-1}$, with $\mathcal M$ ample, 
is globally generated, then Wahl's conjecture is a consequence of 
Theorem \ref{conjecture}. Schubert varieties are examples of 
varieties admitting minimal ample line bundle. When the Schubert 
variety is a flag variety this is well known; e.g.  in the notation of the
previous sections the minimal ample line bundle on $G/B$ is defined
by the weight 
$$-\rho = - (\omega_1 + \cdots \omega_{n-1}).$$
For a general Schubert variety the claim follows by the fact that any ample line
bundle on a Schubert variety may be lifted to an ample line bundle
on the flag variety containing the Schubert variety \cite[Prop.2.2.8]{Brion} 

With these remarks in place the following corollary now follows from 
Proposition \ref{xyz} and Theorem \ref{thm}

\begin{Corollary}
The conjecture of Wahl on the surjectivity of the  
map (\ref{gauss}) is satisfied  for Kempf varieties  $Z$ 
and  ample line bundles $\mathcal L_1$ and $\mathcal L_2$.
\end{Corollary}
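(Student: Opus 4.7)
The plan is to assemble the corollary from three already-established ingredients: the maximal diagonal splitting of Theorem \ref{thm}, the blow-up splitting of Proposition \ref{xyz}, and the reduction given by Theorem \ref{conjecture}.

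First, I would fix a Kempf variety $Z \subset \SL_n/B$ and invoke Theorem \ref{thm}\eqref{Fs} to obtain the Frobenius splitting $\sigma := \partial(\delta(M(g,h))^{p-1})$ of $\SL_n/B \times \SL_n/B$. This $\sigma$ vanishes with maximal multiplicity along $\Delta_{\SL_n/B}$ and compatibly splits $Z \times Z$. By Lemma \ref{Lemma:LMP}, maximal multiplicity vanishing is equivalent to $\Delta_{\SL_n/B}$ being maximally compatibly split under $\sigma$.

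Second, I would apply Proposition \ref{xyz}\textup{(ii)} with the ambient scheme $\SL_n/B \times \SL_n/B$, the maximally compatibly split subscheme $Y = \Delta_{\SL_n/B}$, and the compatibly split subscheme $Z \times Z$. Since $Z$ is smooth and closed in $\SL_n/B$, the scheme-theoretic intersection $Y \cap (Z \times Z)$ coincides with the diagonal $\Delta_Z$. Hence the induced splitting of $Z \times Z$ extends to a Frobenius splitting of $\Bl_{\Delta_Z}(Z \times Z)$ that compatibly splits the exceptional divisor $E_Z$. This verifies precisely the hypothesis of Theorem \ref{conjecture}.

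Third, since $Z$ is a Schubert variety, it admits a minimal ample line bundle $\cL$: the minimal ample on $G/B$ corresponds to $-\rho = -(\omega_1 + \cdots + \omega_{n-1})$, and, as recalled just after Theorem \ref{conjecture}, this lifts via \cite[Prop.~2.2.8]{Brion} to a minimal ample on the Schubert variety $Z$. Given any ample line bundles $\cL_1$ and $\cL_2$ on $Z$, the defining property of the minimal ample allows us to write $\cL_i = \cL \otimes \cM_i$ with $\cM_i$ globally generated. Applying Theorem \ref{conjecture} with $j=1$ then yields surjectivity of
\begin{equation*}
H^0\bigl(Z \times Z, \cI_\Delta \otimes (\cL_1 \boxtimes \cL_2)\bigr) \longrightarrow H^0\bigl(Z, \Omega^1_Z \otimes \cL_1 \otimes \cL_2\bigr),
\end{equation*}
which is exactly Wahl's conjecture for $Z$.

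The main obstacle is not any single step of the argument but ensuring that the minimal ample $\cL$ on the Kempf variety is very ample (as assumed in Theorem \ref{conjecture}); for $Z \subset \SL_n/B$ this holds because regular dominant weights give very ample line bundles on the flag variety, which then restrict very amply to Schubert subvarieties. Everything else is a bookkeeping exercise in verifying that the hypotheses of Proposition \ref{xyz}\textup{(ii)} and Theorem \ref{conjecture} line up precisely with the output of Theorem \ref{thm}.
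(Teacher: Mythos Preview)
Your proposal is correct and follows exactly the approach the paper indicates: the paper's own ``proof'' is the single sentence preceding the corollary, stating that it follows from Proposition \ref{xyz} and Theorem \ref{thm} together with the remarks about minimal ample line bundles on Schubert varieties. You have simply spelled out these steps in full, including the passage through Lemma \ref{Lemma:LMP} and the very-ampleness check (which is automatic since every ample line bundle on $G/B$, and hence its restriction to a Schubert variety, is very ample).
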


The rest of this paper is concerned with the proof of Theorem 
\ref{conjecture}. The proof is highly inspired by the discussion 
in Section 3 of \cite{LakshmibaiMehtaParameswaran1998}. 
As a side result we obtain certain cohomological vanishing 
results for smooth varieties admitting  various types of 
Frobenius splitting (cf. Prop. \ref{prop1} and Prop. \ref{prop2}); 
e.g. for Kempf varieties.  We start by collecting a number of
well known results about blow-ups along diagonals.

\subsection{Blow-up of ${{\mathbb P}^N \times {\mathbb P}^N}$ along
the diagonal}

Consider the variety ${\mathbb P^N}={\mathbb P}(V)$ with 
homogeneous coordinates 
$X_0, \dots, X_N$. The homogeneous ideal defining the diagonal within
the product ${{\mathbb P}^N \times {\mathbb P}^N}$ is generated
by the elements
$$ X_{i,j} = X_i \otimes X_j - X_j \otimes X_i~~, 0 \leq i < j \leq N;$$
all of the same multidegree $(1,1)$.  Applying the Rees algebra 
description of the blow-up this leads to an embedding 
of ${\rm Bl}_\Delta(
{\mathbb P}^N \times {\mathbb P}^N)$ as a closed subvariety of 
the product
\begin{equation}
\label{product}
{{\mathbb P}^N \times {\mathbb P}^N} \times {\mathbb P}^{{N+1 \choose 2}-1}.
\end{equation}
Alternatively one could also obtain this embedding by considering 
 ${\rm Bl}_\Delta({\mathbb P}^N \times {\mathbb P}^N)$
as the graph of the rational morphism
\begin{equation}
\label{rational}
{{\mathbb P}^N \times {\mathbb P}^N} \dashrightarrow 
{\mathbb P}^{{N+1 \choose 2}-1},
\end{equation}
defined by the generators $X_{i,j}$ of the diagonal ideal (cf.~\cite[Ex. 7.18]{Harris}). The latter description makes it 
evident that  ${\rm Bl}_\Delta({\mathbb P}^N \times {\mathbb P}^N)$
is contained within 
\begin{equation}
\label{product2}
{\mathbb P}^N \times {\mathbb P}^N  \times {\rm Gr}_2(V),
\end{equation}
where ${\rm Gr}_2(V)$ denotes the Grassmannian of planes 
in $V$ with the Pl\"ucker embedding  in  ${\mathbb P}^{ {N+1 \choose 2} -1} $.
This also explains the following setwise description 
of the blow-up 
\begin{equation}
\label{set-descrip}
 {\rm Bl}_\Delta({\mathbb P}^N \times {\mathbb P}^N)
= \{ (l_1, l_2 ,b) \in  {\mathbb P}^N \times {\mathbb P}^N  \times {\rm Gr}_2(V)
~ : ~  l_1,l_2 \subset b \}.
\end{equation}
In this setting the exceptional divisor $E$ is determined as the 
set of points 
$$ E =  \{ (l ,b) \in  {\mathbb P}^N  \times {\rm Gr}_2(V)
~ : ~  l \subset b \}
\subset  {\mathbb P}^N  \times {\rm Gr}_2(V),
$$
where we consider $ {\mathbb P}^N$ as being diagonally embedded 
in $ {\mathbb P}^N \times {\mathbb P}^N $.

The projection on the first two coordinates 
$$ \pi : {\rm Bl}_\Delta({\mathbb P}^N \times {\mathbb P}^N) \rightarrow
{{\mathbb P}^N \times {\mathbb P}^N},$$
is the  blow-up map. Restricting $\pi$ to the exceptional
divisor $E$ defines
the map
$$\pi_E : E \rightarrow {\mathbb P}^N,$$
coinciding with the projectivized tangent bundle on $\mathbb{P}^N$.
Finally we let 
$$ \tau :  {\rm Bl}_\Delta({\mathbb P}^N \times {\mathbb P}^N)
\rightarrow {\rm Gr}_2(V),$$
denote the map induced by projection on the third 
coordinate, while $\tau_E$ denotes its restriction to 
$E$. 

\begin{Lemma}
Let $\mathcal O_{2,V}(1)$ (resp. $\mathcal O(1)$) denote the 
ample generator of the  Picard group of ${\rm Gr}_2(V)$ (resp.
$\mathbb P^N$). Then as locally free sheaves
\begin{equation}
\label{iso-line} 
\tau^* ( \mathcal O_{2,V}(1)) \simeq 
\mathcal O(-E) \otimes \pi^* \big( \mathcal O(1) \boxtimes \mathcal O(1) \big).
\end{equation} 
\end{Lemma}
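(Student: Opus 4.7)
The plan is to identify $\tau^{*}\mathcal{O}_{2,V}(1)$ by using the fact that $\tau$ is essentially the map defined by global sections corresponding to the generators $X_{i,j}$ of the diagonal ideal. Since $\mathcal{O}_{2,V}(1)$ is the restriction of $\mathcal{O}_{\mathbb{P}^{\binom{N+1}{2}-1}}(1)$ along the Plücker embedding, it suffices to compute the pullback of $\mathcal{O}(1)$ along the composition $\mathrm{Bl}_\Delta(\mathbb{P}^N\times\mathbb{P}^N)\xrightarrow{\tau}\mathrm{Gr}_2(V)\hookrightarrow\mathbb{P}^{\binom{N+1}{2}-1}$.

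First I would observe that $\mathcal{I}_\Delta\otimes(\mathcal{O}(1)\boxtimes\mathcal{O}(1))$ is globally generated: indeed the sections $X_{i,j}=X_i\otimes X_j-X_j\otimes X_i$ are sections of this twist and generate it (they cut out the diagonal scheme-theoretically in the Segre-embedded product). This gives a surjection
$$
\mathcal{O}_{\mathbb{P}^N\times\mathbb{P}^N}^{\binom{N+1}{2}}\twoheadrightarrow \mathcal{I}_\Delta\otimes(\mathcal{O}(1)\boxtimes\mathcal{O}(1)).
$$
Next I would pull back along $\pi$ and use the defining property of the blow-up, namely that the natural map $\pi^{*}\mathcal{I}_\Delta\to\mathcal{O}_{\mathrm{Bl}}$ has image the invertible ideal sheaf of the exceptional divisor $\mathcal{O}(-E)$. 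Composing these two surjections yields a globally generated line bundle quotient
$$
\mathcal{O}_{\mathrm{Bl}}^{\binom{N+1}{2}}\twoheadrightarrow \mathcal{O}(-E)\otimes\pi^{*}(\mathcal{O}(1)\boxtimes\mathcal{O}(1)),
$$
and thus a morphism $\varphi\colon\mathrm{Bl}_\Delta(\mathbb{P}^N\times\mathbb{P}^N)\to\mathbb{P}^{\binom{N+1}{2}-1}$ with
$$
\varphi^{*}\mathcal{O}_{\mathbb{P}^{\binom{N+1}{2}-1}}(1)\;\simeq\;\mathcal{O}(-E)\otimes\pi^{*}(\mathcal{O}(1)\boxtimes\mathcal{O}(1)).
$$

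Finally I would identify $\varphi$ with the composition of $\tau$ and the Plücker embedding. By construction $\varphi$ is the map given by the sections $X_{i,j}$, which is exactly the projection onto the third factor in the embeddings (\ref{product}) and (\ref{product2}) coming from the Rees-algebra/graph description of the blow-up. The setwise identification (\ref{set-descrip}) shows $\varphi$ factors through $\mathrm{Gr}_2(V)$ via the Plücker map, and this factorization is $\tau$. Restricting the displayed isomorphism along this factorization gives (\ref{iso-line}).

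The only real subtlety is keeping the sign of $E$ straight: one must use that the universal quotient $\pi^{*}\mathcal{I}_\Delta\twoheadrightarrow\mathcal{O}_{\mathrm{Bl}}(1)$ in the Proj of the Rees algebra identifies $\mathcal{O}_{\mathrm{Bl}}(1)$ with $\mathcal{I}_\Delta\cdot\mathcal{O}_{\mathrm{Bl}}=\mathcal{O}(-E)$ rather than $\mathcal{O}(E)$. Once this is correctly accounted for, no further computation is needed.
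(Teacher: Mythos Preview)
Your proof is correct and takes a genuinely different route from the paper. The paper argues numerically: for $N\geq 2$ it uses the identification $\mathrm{Pic}\big(\mathrm{Bl}_\Delta(\mathbb{P}^N\times\mathbb{P}^N)\big)\cong\mathrm{Pic}(\mathbb{P}^N\times\mathbb{P}^N)\oplus\mathbb{Z}$ to write $\tau^*\mathcal{O}_{2,V}(1)\simeq\mathcal{O}(-c_1E)\otimes\pi^*(\mathcal{O}(c_2)\boxtimes\mathcal{O}(c_3))$ for unique integers $c_1,c_2,c_3$, and then pins these down by restriction---first to the open complement of $E$ (where the bidegree of the rational map \eqref{rational} gives $c_2=c_3=1$), then to the strict transform of a $\mathbb{P}^1\times\mathbb{P}^1$ on which $\tau$ is constant (forcing $c_1=1$); the case $N=1$ is treated separately as trivial. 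Your argument instead constructs the isomorphism directly from the Rees-algebra description: the generators $X_{i,j}$ globally generate $\mathcal{I}_\Delta(1,1)$, the universal quotient on the blow-up identifies the resulting invertible sheaf with $\mathcal{O}(-E)\otimes\pi^*\mathcal{O}(1,1)$, and the induced morphism to projective space is by construction the third projection, i.e.\ $\tau$ followed by the Pl\"ucker embedding. Your approach is cleaner in that it avoids the case split on $N$ and yields the actual isomorphism rather than only its numerical class; the paper's approach has the virtue of being insensitive to the functorial bookkeeping (in particular the sign of $E$) that you rightly flag as the one delicate point.
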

\begin{proof}
This follows from a local calculation but can also be obtained in
the following more abstract  way : assume, first of all, that $N\geq2$
in which case we have the following identity of Picard groups 
$$ {\rm Pic}\big({\rm Bl}_\Delta({\mathbb P}^N \times 
{\mathbb P}^N) \big) 
\simeq {\rm Pic}(  {\mathbb P}^N \times {\mathbb P}^N)
\oplus \mathbb Z.$$
In particular, we may find unique integers $c_1,c_2$ and $c_3$ 
such that 
$$  \tau^* ( \mathcal O_{2,V}(1))  \simeq 
\mathcal O(- c_1E) \otimes \pi^* \big( \mathcal O(c_2) \boxtimes \mathcal O(c_3) \big).$$
Restricting to the open subset ${\rm Bl}_\Delta({\mathbb P}^N \times 
{\mathbb P}^N) \setminus E \simeq  ({\mathbb P}^N \times 
{\mathbb P}^N) \setminus \Delta$ we determine $(c_2,c_3)$
as the bidegree of the rational morphism (\ref{rational}). In particular,
we find that $c_2=c_3=1$. To find $c_1$ we fix some line 
$\mathbb P^1$ inside $\mathbb P^N$ and consider 
$\mathbb P^1 \times \mathbb P^1$ as a closed subset 
of $ {\rm Bl}_\Delta({\mathbb P}^N \times 
{\mathbb P}^N) $ by identifying it with its strict 
transform.
As the rational morphism (\ref{rational}) is constant on an open  
dense subset of $\mathbb P^1 \times \mathbb P^1$, the same is true for
the restriction of $\tau$ to $\mathbb P^1 \times \mathbb P^1$. In particular, the restriction of the 
sheaf $\tau^* ( \mathcal O_{2,V}(1))$ to $\mathbb P^1 \times 
\mathbb P^1$ is trivial. Now as the sheaf of ideals of the diagonal in  
$\mathbb P^1 \times \mathbb P^1$ equals $\mathcal O(-1)
\boxtimes \mathcal O(-1)$ we conclude 
$$ -c_1 + c_2 = -c_1 + c_3 = 0.$$
Thus $c_1=1$. This ends the proof in case  $N \geq 2$. For $N=1$
the map $\tau$ is constant and the claimed isomorphism (\ref{iso-line})
is trivial.
\end{proof}

We claim that $\tau$ is a ${\mathbb P}^1 \times {\mathbb P}^1$-bundle. 
More precisely, let $b_0 \in {\rm Gr}_2(V)$ denote any plane in $V$
and let $P_0$ denote the stabilizer of $b_0$ in the group ${\rm SL}(V)$.
Then ${\rm Gr}_2(V)$ is isomorphic to the quotient ${{\rm SL}(V)}/{P_0}$
while $ {\rm Bl}_\Delta({\mathbb P}^N \times {\mathbb P}^N)$ may be described
as 
\begin{equation}
\label{equiv-bl}
{\rm Bl}_\Delta({\mathbb P}^N \times {\mathbb P}^N) = 
{\rm SL}(V) \times_{P_0} ({\mathbb P}(b_0) \times {\mathbb P}(b_0) ),
\end{equation}
where $P_0$ acts by the diagonal action on  ${\mathbb P}(b_0) \times {\mathbb P}(b_0) $.
Thus $\tau$ is just the natural map
$$ \tau : {\rm SL}(V) \times_{P_0} ({\mathbb P}(b_0) \times {\mathbb P}(b_0) )
\rightarrow {{\rm SL}(V)}/{P_0}.$$
In this notation we may describe the exceptional divisor
as 
$$ E = {\rm SL}(V) \times_{P_0} {\mathbb P}(b_0),$$  
where we think of $E$  as a subset of
(\ref{equiv-bl}) by embedding ${\mathbb P}(b_0)$ diagonally
in the product ${\mathbb P}(b_0) \times {\mathbb P}(b_0) $. It follows 
that the restriction
$$\tau_E : {\rm SL}(V) \times_{P_0} {\mathbb P}(b_0)
\rightarrow {{\rm SL}(V)}/{P_0},$$
is a ${\mathbb P}^1$-bundle over
${\rm Gr}_2(V)$. 

\subsection{Blow-up of diagonals in general}

Returning to the general case of a smooth projective subvariety 
$Z$ in ${\mathbb P}(V)$ we may consider the blow-up 
${\rm Bl}_\Delta (Z \times Z)$  as the strict transform of 
$Z \times Z$ in ${\rm Bl}_\Delta({\mathbb P}^N  \times 
{\mathbb P}^N)$. In particular, we obtain a closed 
embedding
\begin{equation}
\label{set-descritZ}
{\rm Bl}_\Delta(Z \times Z) \subset Z \times Z \times {\rm Gr}_2(V).
\end{equation}
The exceptional divisor $E_Z$ is thus embedded as
\begin{equation}
\label{set-descrip-EZ}
 E_Z \subset Z \times {\rm Gr}_2(V).
\end{equation}
In this setting the  blow-up morphism
$$ \pi_Z : {\rm Bl}_\Delta(Z \times Z) \rightarrow Z \times Z,$$
coincides with the projection on the first two coordinates, while 
its restriction 
$$\pi_{E_Z} : E_Z \rightarrow Z$$
coincides with the projectivized tangent bundle on $Z$.
Thus if we consider ${\rm Gr}_2(V)$ as the set of lines in 
${\mathbb P}^N= {\mathbb P}(V)$, then $E_Z$ consists 
of the set of pairs $(l,b) \in Z \times {\rm Gr}_2(V)$
such  that $b$ is a line tangent to the point $l$ in $Z$.

The projection on the third coordinate is denoted by 
$$\tau_Z : {\rm Bl}_\Delta(Z \times Z) \rightarrow {\rm Gr}_2(V),$$
while its restriction to $E_Z$ is denoted by $\tau_{E_Z}$.

\subsection{Fibres of $\tau_{E_Z}$}

By the discussion above the  fibre of $\tau_{E_Z}$ over a 
line $b$ in ${\mathbb P}(V)$ consists of the set of points 
$l$ in $Z$ such that $b$ is tangent to $Z$ at $l$. Thus
the following result is now easy to prove

 \begin{Lemma}
\label{linear subspace}
If every nonempty fibre of $\tau_{E_Z}$ has dimension $1$ 
then $Z$ coincides with ${\mathbb P} (V')$ for some vector subspace 
$V'$ of $V$. 
\end{Lemma}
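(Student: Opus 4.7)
The plan is to use the one-dimensional fibre hypothesis to show that every projective line tangent to $Z$ at some point lies inside $Z$, and then to conclude by a short tangent-space argument that $Z$ is a linear subspace of $\PP(V)$.

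First, identify the fibre of $\tau_{E_Z}$ over a line $b \in {\rm Gr}_2(V)$. Using the identification $E_Z \simeq \PP(T_Z)$, a point of $E_Z$ may be written as $(l,[v])$ with $l \in Z$ and $[v] \in \PP(T_l Z)$; the morphism $\tau_{E_Z}$ sends this point to the unique line in $\PP(V)$ through $l$ with tangent direction $v$. Hence $(l,[v])$ lies in $\tau_{E_Z}^{-1}(b)$ if and only if $l \in Z \cap b$ and $[v]$ is the tangent direction to $b$ at $l$. Since this tangent direction is uniquely determined by $l$, projection on the first factor gives an injective morphism
\[
\tau_{E_Z}^{-1}(b) \hookrightarrow Z \cap b \subset b.
\]

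If this fibre has dimension one, its image under the above injection is a one-dimensional constructible subset of the irreducible curve $b \cong \PP^1$, and therefore its closure is all of $b$. As the closure lies in the closed subvariety $Z \cap b$ of $b$, we conclude $b \subset Z$. Thus the hypothesis forces every projective line which is tangent to $Z$ at some point to be entirely contained in $Z$.

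To finish, fix any closed point $p \in Z$ and consider the embedded projective tangent space $\mathbb{T}_p Z \subset \PP(V)$, defined as the union of all projective lines through $p$ with tangent direction in $T_p Z$. Every such line is tangent to $Z$ at $p$, hence by the previous step lies in $Z$, so $\mathbb{T}_p Z \subset Z$. Since $\mathbb{T}_p Z$ is a linear subspace of $\PP(V)$ of dimension $\dim Z$, and $Z$ is irreducible of the same dimension, we must have $Z = \mathbb{T}_p Z = \PP(V')$ for the corresponding vector subspace $V' \subset V$. The only real content is the explicit identification of the fibre; once that is in place the dimension count is immediate, and I do not anticipate any serious obstacle.
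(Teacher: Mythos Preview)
Your argument is correct and follows essentially the same route as the paper's own proof: first deduce from the one-dimensional fibre hypothesis that every tangent line to $Z$ is contained in $Z$, and then use that the embedded projective tangent space at any point has the same dimension as the irreducible variety $Z$ to conclude $Z = \mathbb{T}_p Z = \PP(V')$. The paper states this in two sentences, while you have simply unpacked the fibre identification and the dimension count in more detail.
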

\begin{proof}
The assumptions means that every tangent line of $Z$ is 
contained in $Z$. In particular, $Z$ contains all of its 
tangent planes. But any tangent plane of $Z$ is of the 
same dimension as $Z$ and consequently $Z$, and all
of its tangent planes, must coincide (this simple
argument was suggested by the referee).
\end{proof}

\subsection{Technical results}
\label{technical}

For technical reasons we will 
need the following setup : let $\mathcal Z$ denote a
projective variety and let 
$$f : Z \times Z \rightarrow \mathcal 
Z ,$$
denote a morphism. The projective morphism  
\begin{equation}
\label{mapZ}
\tau_f = (\tau_Z , f \circ \pi_Z ) : {\rm Bl}_\Delta(Z \times Z) 
\rightarrow {\rm Gr}_2(V)  \times  \mathcal Z  ,
\end{equation}
has a Stein factorization for which we use the notation
\begin{equation}
\label{Stein} {\rm Bl}_\Delta(Z \times Z) \xrightarrow{\mu_{f}} 
\mathcal B_{f} \rightarrow \mathcal {\rm Gr}_2(V) \times\mathcal  Z  .
\end{equation}
The restriction of $\tau_f$ to $E_Z$ is denoted by 
$$ \tau_{E,f} : E_Z \rightarrow {\rm Gr}_2(V)  \times  \mathcal Z.$$
More important is the map 
$$ \mu_{E,f} : E_{Z} \rightarrow { \mathcal S_{f}} := \mu_f\big(
E_Z\big),$$
induced by the restriction of $\mu_f$.
We claim

\begin{Lemma}
\label{semi-trivial}
The derived direct  images $R^i (\mu_{E,f})_* \mathcal O_{E_Z}$
are zero when $i>0$.
\end{Lemma}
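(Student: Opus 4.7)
The plan is to reduce the vanishing to a computation on closed subschemes of $\PP^1$, then to appeal to the theorem on formal functions. I want the fibres of $\mu_{E,f}$ to sit inside $\PP^1$'s, so that their higher cohomology vanishes automatically.

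First I would trace through the factorisations to describe the fibres of $\mu_{E,f}$. By construction, $\tau_{E,f} : E_Z \to \mathrm{Gr}_2(V) \times \mathcal{Z}$ factors as $\mu_{E,f}$ followed by the restriction $\mathcal{S}_f \to \mathrm{Gr}_2(V) \times \mathcal{Z}$, which is finite since $\mathcal{S}_f$ is closed in $\mathcal{B}_f$ and $\mathcal{B}_f \to \mathrm{Gr}_2(V) \times \mathcal{Z}$ is finite by Stein factorisation. Composing further with the projection to $\mathrm{Gr}_2(V)$ recovers $\tau_{E_Z}$, which is the closed immersion $E_Z \hookrightarrow E$ followed by the $\PP^1$-bundle $\tau_E : E \to \mathrm{Gr}_2(V)$. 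Chasing these factorisations (and passing to a geometric point when necessary), each geometric fibre of $\mu_{E,f}$ embeds as a closed subscheme of a fibre of $\tau_E$, hence as a closed subscheme of a $\PP^1$.

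Next I would verify the fibrewise cohomological vanishing. A nonzero coherent ideal on $\PP^1$ is a line bundle whose quotient is $0$-dimensional, so any closed subscheme $F \subset \PP^1$ is either all of $\PP^1$ or is $0$-dimensional; either way $H^i(F, \mathcal{O}_F) = 0$ for $i \geq 1$, and $H^i(F, \mathcal{G}) = 0$ for $i \geq 2$ and every coherent $\mathcal{G}$ on $F$. For a closed point $s \in \mathcal{S}_f$ with fibre $F = \mu_{E,f}^{-1}(s)$ and infinitesimal thickenings $F_n$, the short exact sequences
\[ 0 \to \mathfrak{m}_s^n / \mathfrak{m}_s^{n+1} \otimes_{k(s)} \mathcal{O}_F \to \mathcal{O}_{F_n} \to \mathcal{O}_{F_{n-1}} \to 0 \]
combined with $H^i(F, \mathcal{O}_F) = 0$ for $i \geq 1$ and the vanishing of $H^{\geq 2}(F, -)$ give, by induction on $n$, that $H^i(F_n, \mathcal{O}_{F_n}) = 0$ for every $n$ and every $i \geq 1$. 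The theorem on formal functions then identifies the completion $(R^i (\mu_{E,f})_* \mathcal{O}_{E_Z})^{\wedge}_s$ with $\varprojlim_n H^i(F_n, \mathcal{O}_{F_n}) = 0$, and Nakayama yields $R^i (\mu_{E,f})_* \mathcal{O}_{E_Z} = 0$ for $i \geq 1$.

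The main obstacle is verifying the scheme-theoretic embedding of fibres of $\mu_{E,f}$ into a $\PP^1$ at non-closed points of $\mathcal{S}_f$. I would handle this by passing to geometric fibres over $\bar k$-points and invoking flat base change for quasi-coherent cohomology to transfer the fibrewise vanishing back to the original fibres.
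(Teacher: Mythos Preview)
Your route via the theorem on formal functions is genuinely different from the paper's. The paper first reduces from $\mu_{E,f}$ to $\tau_{E,f}$ (the map $\mathcal S_f \hookrightarrow \mathcal B_f \to \mathrm{Gr}_2(V)\times \mathcal Z$ is finite), then works locally over an affine $U\subset \mathrm{Gr}_2(V)$ trivialising the $\PP^1$-bundle $\tau_E$. Over $U$ one embeds $\tau_{E_Z}^{-1}(U)$ into $Y=\PP^1\times U\times \mathcal Z$ via the graph of $f\circ\pi_Z$, so that $\tau_{E,f}$ is the restriction of the projection $p_{2,3}:Y\to U\times\mathcal Z$. Since $p_{2,3}$ is a trivial $\PP^1$-bundle, $R^{\geq 2}(p_{2,3})_*(-)=0$ and $R^1(p_{2,3})_*\mathcal O_Y=0$; the long exact sequence of $0\to\mathcal I\to\mathcal O_Y\to\iota_*\mathcal O\to 0$ then gives the vanishing directly. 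This relative argument sidesteps formal functions entirely.

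Your argument has a real gap at the inductive step. The displayed sequence
\[
0 \to \mathfrak m_s^n/\mathfrak m_s^{n+1}\otimes_{k(s)}\mathcal O_F \to \mathcal O_{F_n}\to \mathcal O_{F_{n-1}}\to 0
\]
is only exact when $\mu_{E,f}$ is flat along $F$, which you have not established and which need not hold. The actual kernel is $K_n=\mathfrak m_s^n\mathcal O_{E_Z}/\mathfrak m_s^{n+1}\mathcal O_{E_Z}$, a coherent $\mathcal O_F$-module that is only a \emph{quotient} of $\mathfrak m_s^n/\mathfrak m_s^{n+1}\otimes_{k(s)}\mathcal O_F\cong\mathcal O_F^{\oplus r}$. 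When $F=\PP^1$ you therefore cannot simply quote $H^1(\mathcal O_F)=0$; a priori $K_n$ could have nonzero $H^1$. The fix is short: $K_n$, being a quotient of $\mathcal O_{\PP^1}^{\oplus r}$, is globally generated, and every globally generated coherent sheaf on $\PP^1$ (torsion plus a sum of $\mathcal O(a_i)$ with $a_i\geq 0$) has vanishing $H^1$. With this correction your induction and the formal functions argument go through. Incidentally, your ``main obstacle'' about non-closed points is a non-issue: vanishing of the coherent sheaf $R^i(\mu_{E,f})_*\mathcal O_{E_Z}$ at all closed points already forces it to be zero.
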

\begin{proof}
As the second map $\mathcal B_{f} \rightarrow \mathcal 
{\rm Gr}_2(V) \times \mathcal Z$  of the Stein factorization 
(\ref{Stein}) is a finite map it suffices to
prove that $R^i (\tau_{E,f})_* \mathcal O_{E_Z}=0$
for $i>0$.
Consider an open affine subset $U$ of ${\rm Gr}_2(V)$
such that 
$$ \tau_E : E  \rightarrow {\rm Gr_2}(V),$$
is a trivial ${\mathbb P}^1$-bundle over $U$. 
Then we may consider $\tau_{E_Z}^{-1}(U)$ 
as a closed subvariety of ${\mathbb P}^1 \times U$. 
Embedding  $\tau_{E_Z}^{-1}(U)$ by the graph of 
$f \circ \pi_{Z}$  defines a closed 
embedding
$$ \iota : \tau_{E_Z}^{-1}(U) \hookrightarrow
Y := {\mathbb P}^1 \times U \times \mathcal Z.$$
The map 
$$ \tau_U :  \tau_{E_Z}^{-1}(U) = \tau_{E,f}^{-1}(U \times \mathcal Z) \rightarrow U \times \mathcal Z,$$
induced by the projection $p_{2,3}$ of $Y$ 
on the second and third coordinate,
coincides with the restriction of $\tau_{E,f}$ 
to the inverse image of $U \times \mathcal Z$. It thus suffices to 
prove that $R^i (\tau_U)_*  \mathcal O_{\tau_{E_Z}^{-1}(U)} = 0$ for $i>0$.  
Now apply the identity
$$ R^i (\tau_U)_*  \mathcal O_{\tau_{E_Z}^{-1}(U)}  =
R^i (p_{2,3})_* (\iota_* \mathcal O_{\tau_{E_Z}^{-1}(U)} ),$$
and the long exact sequence 
$$ \cdots \rightarrow R^1 (p_{2,3})_*  \mathcal I
\rightarrow  R^1 (p_{2,3})_*  \mathcal O_Y =0
\rightarrow R^1(p_{2,3})_* (\iota_* \mathcal O_{\tau_{E_Z}^{-1}(U)} )
\rightarrow 0 \rightarrow \cdots $$
associated to the trivial ${\mathbb P}^1$-bundle $p_{2,3}$,
and the short exacts sequence
$$ 0 \rightarrow \mathcal I \rightarrow \mathcal O_Y 
\rightarrow \iota_* \mathcal O_{\tau_{E_Z}^{-1}(U)} 
\rightarrow 0,$$
defining $\tau_{E_Z}^{-1}(U) $ as a closed subvariety in $Y$.  
\end{proof}

\begin{Lemma}
\label{birational}
Assume that $Z$ does not coincide with a closed
subvariety of ${\mathbb P}(V)$ of the form 
${\mathbb P}(V')$, for some vector subspace $V'$
of $V$. Then $\mu_{E,f}$ is birational
\end{Lemma}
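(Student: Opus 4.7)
The plan is to show that for a generic point $(l_0, b) \in E_Z$, the fiber $\mu_{E,f}^{-1}(\mu_{E,f}(l_0, b))$ consists of the single point $(l_0, b)$, which yields birationality of $\mu_{E,f}$. The first step is to verify that for generic $(l_0, b) \in E_Z$ one has $b \not\subset Z$. The locus $A := \{(l, b) \in E_Z : b \subset Z\}$ is closed in $E_Z$, being the preimage under $\tau_{E_Z}$ of the (closed) Fano variety of lines contained in $Z$. If $A$ equalled $E_Z$, then every tangent line of $Z$ would be contained in $Z$; since the projective tangent space $T_l Z$ is spanned by the tangent lines through $l$, this would force $T_l Z \subset Z$ for every $l \in Z$. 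As $Z$ is irreducible and $\dim T_l Z = \dim Z$, we would conclude $Z = T_l Z$, a linear subspace, contradicting the hypothesis (this is essentially the argument of Lemma \ref{linear subspace}). Hence $A \subsetneq E_Z$ and its open complement is dense.

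Next, for any $(l_0, b) \in E_Z \setminus A$ the intersection $b \cap Z$ is finite (since $b \simeq \mathbb{P}^1$ and $b \not\subset Z$), so I claim the fiber $\tau_f^{-1}(\tau_f(l_0, b)) = \tau_f^{-1}(b, f(l_0, l_0))$ is finite. On $\Bl_\Delta(Z \times Z) \setminus E_Z \simeq (Z \times Z) \setminus \Delta$ this fiber consists of pairs $(p_1, p_2)$ with $\overline{p_1 p_2} = b$ and $f(p_1, p_2) = f(l_0, l_0)$, which forces $p_1, p_2 \in b \cap Z$; on $E_Z$ it consists of tangent vectors $(l, b)$ with $l \in b \cap Z$ and $f(l, l) = f(l_0, l_0)$. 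Both contributions are therefore finite.

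Finally, since the second map $\mathcal{B}_f \to {\rm Gr}_2(V) \times \mathcal{Z}$ of the Stein factorization \eqref{Stein} is finite, the fiber $\mu_f^{-1}(\mu_f(l_0, b))$ is contained in $\tau_f^{-1}(\tau_f(l_0, b))$ and is therefore finite; being also connected by Stein factorization, it must consist of the single point $(l_0, b)$. Intersecting with $E_Z$ gives $\mu_{E,f}^{-1}(\mu_{E,f}(l_0, b)) = \{(l_0, b)\}$, and since this holds on a dense open subset of $E_Z$, the map $\mu_{E,f}$ is birational. The main obstacle I expect is the first step: upgrading the direct contrapositive of Lemma \ref{linear subspace} (some tangent line is not contained in $Z$) to the stronger assertion that $b \not\subset Z$ on a dense open subset of $E_Z$. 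This relies on closedness of the Fano variety of lines in ${\rm Gr}_2(V)$ together with the irreducibility of $E_Z$; the remaining steps are then routine applications of the properties of the Stein factorization.
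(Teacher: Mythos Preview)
Your argument follows essentially the same strategy as the paper's: both locate points of $E_Z$ over which the $\tau_f$-fibre is finite, then use the Stein factorization to conclude. You package the first step by introducing the closed locus $A\subset E_Z$ of tangent lines contained in $Z$ and working on its dense open complement; the paper instead produces a single $b\in{\rm Gr}_2(V)$ with finite $\tau_{E_Z}$-fibre via Lemma~\ref{linear subspace}, argues that $\tau_Z^{-1}(b)$ is then also finite, and invokes openness of the finite-fibre locus $Y_0$ in the target. These are equivalent formulations of the same idea.

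One step in your write-up needs tightening. In positive characteristic, having single-point set-theoretic fibres over a dense open does \emph{not} by itself yield birationality (the Frobenius morphism is a counterexample). What is needed in addition is the other half of the Stein factorization, namely $(\mu_f)_*\mathcal{O}_{{\rm Bl}_\Delta(Z\times Z)}=\mathcal{O}_{\mathcal{B}_f}$: over the open locus in $\mathcal{B}_f$ where $\mu_f$ has finite fibres, $\mu_f$ is then proper and quasi-finite, hence finite, and combined with this pushforward identity it is an isomorphism there. Restricting that isomorphism to $E_Z$ gives the desired birationality of $\mu_{E,f}$. The paper makes this explicit (``$\mu_f$ induces an isomorphism \dots\ over $Y_0$''); you already have Stein factorization in play, so you should invoke this property rather than pass directly from single fibres to birationality.
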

\begin{proof}
Let $Y \subset {\rm Gr}_2(V) \times \mathcal Z$ denote 
the image of $\tau_f$.
We claim that there exists a point $y \in Y$ 
such that the fibre $\tau_f^{-1}(y)$ is nonempty and 
finite. To see this we use Lemma \ref{linear subspace} to 
obtain a point $b \in {\rm Gr}_2(V)$ such that the 
fibre $\tau_{E_Z}^{-1}(b)$ is nonempty and finite.
Assume, for a moment, that $\tau_Z^{-1}(b)$ is 
infinite : then $\pi_Z(\tau_Z^{-1}(b))$ is an
infinite closed subvariety of ${\mathbb P}(b)\times {\mathbb P}(b) = 
{\mathbb P}^1\times {\mathbb{P}^1}$ and thus ${\mathbb P}(b)$ is 
contained in $Z$. As a consequence 
$$
({\mathbb P}(b) \times {\mathbb P}(b)) \setminus 
\Delta({\mathbb P}(b)) \times \{ b \},
$$
is a subset of ${\rm Bl}_\Delta(Z  \times Z)$
and thus, by taking the closure, we find that
$$ {\mathbb P}(b) \times \{ b \} \subset E_Z.$$
But then ${\mathbb P}(b) \times \{ b \}$ is a subset of the 
finite set $\tau_{E_Z}^{-1}(b)$, which is a 
contradiction. It follows that $\tau_Z^{-1}(b)$
is finite and nonempty. Choose an element 
$y$ in  $\tau_f( \tau_Z^{-1}(b))$.
As a subset of $\tau_Z^{-1}(b)$  the 
set $\tau_f^{-1}(y)$ is then finite.

Let now $Y_0$ denote the nonempty set of points in 
$Y$ where the associated fibre of $\tau_f$ is finite.
Then $Y_0$ is an open subset of $Y$
(\cite[Cor. I.8.3]{Mumford}). 
 It follows 
that $\mu_f$ induces an isomorphism between
$\mathcal B_{f}$ and ${\rm Bl}_\Delta(Z \times Z)$
over $Y_0$
$$ \mu_{f,0} : {\tau_f}^{-1}(Y_0) \xrightarrow{\simeq} \mu_{f}
({\tau_f}^{-1}(Y_0)).$$  
It thus suffices to prove that the intersection 
of $E_Z$ and ${\tau_f}^{-1}(Y_0)$ is nonempty.
But this is clear as $\tau_{E_Z}^{-1}(b)$ is 
a nonempty subset of  ${\tau_f}^{-1}(Y_0)$.
\end{proof}

From now on we will assume that $f : Z \times Z
\rightarrow \mathcal Z$ is the product 
$(f_1,f_2)$ of two morphisms
$$ f_i : Z \rightarrow \mathcal Z_i, ~i=1,2. $$
We can then prove.
 
\begin{Lemma}
\label{conn-fibres}
The 
fibres of $\mu_{E,f}$ are connected.
 \end{Lemma}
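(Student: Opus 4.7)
The plan is to analyse the fibre $C := \mu_{E,f}^{-1}(s) = \mu_f^{-1}(s)\cap E_Z$ directly for each closed point $s\in\mathcal S_f$, exploiting the product structure $f=(f_1,f_2)$. Let $(b,z_1,z_2)\in{\rm Gr}_2(V)\times\mathcal Z_1\times\mathcal Z_2$ denote the image of $s$, and put $F_i:=f_i^{-1}(z_i)\subset Z$. Since $\mu_f$ has connected fibres by Stein factorization, $\mu_f^{-1}(s)$ is a connected component of $W:=\tau_f^{-1}(b,z_1,z_2)$; consequently it suffices to show that every connected component of $W$ meets $E_Z$ in a connected set.

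Because $f=(f_1,f_2)$, the fibre $W$ admits an explicit coordinate-wise description: off $E_Z$ it consists of those $(l_1,l_2,b)\in {\rm Bl}_\Delta(Z\times Z)$ with $l_1\in F_1\cap\mathbb P(b)$, $l_2\in F_2\cap\mathbb P(b)$ and $l_1\neq l_2$; on $E_Z$ it consists of diagonal points $(l,l,b)$ with $l\in F_1\cap F_2\cap\mathbb P(b)$ and $b$ tangent to $Z$ at $l$. Since $\mathbb P(b)\cong\mathbb P^1$ is irreducible, for each $i$ the set $F_i\cap\mathbb P(b)$ is either finite or the whole of $\mathbb P(b)$, and this leads to a natural trichotomy.

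If $\mathbb P(b)\not\subset F_i$ for both $i=1,2$, then $W$ is a finite set and every connected component is a single point, which meets $E_Z$ connectedly. If $\mathbb P(b)\subset F_1$ but $\mathbb P(b)\not\subset F_2$ (or symmetrically), one verifies that $W$ is a disjoint union of copies of $\mathbb P^1$, one for each $l_2\in F_2\cap\mathbb P(b)$: the copy indexed by $l_2$ is the strict transform of $\mathbb P(b)\times\{l_2\}$ in ${\rm Bl}_\Delta(Z\times Z)$ and meets $E_Z$ precisely in the single point $(l_2,l_2,b)$. Finally, if $\mathbb P(b)\subset F_1\cap F_2$, then $W$ equals the strict transform of $\mathbb P(b)\times\mathbb P(b)\subset Z\times Z$ in ${\rm Bl}_\Delta(Z\times Z)$; as the diagonal of $\mathbb P(b)\times\mathbb P(b)$ is a Cartier divisor on that smooth surface, the strict transform is isomorphic to $\mathbb P^1\times\mathbb P^1$, and its intersection with $E_Z$ is the diagonally embedded $\mathbb P(b)\cong\mathbb P^1$, which is connected.

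The principal technical point is the verification, in the last case, that the strict transform picks up the line $b\in\mathbb P(T_lZ)=E_Z|_l$ as its $E_Z$-direction at each diagonal point $(l,l)$. This follows from the identification $E_Z\simeq\mathbb P(TZ)$ combined with the fact that $\mathbb P(b)\subset Z$ implies $T_l\mathbb P(b)\subset T_lZ$ coincides with the line $b$; hence the tangent direction of any approach to the diagonal along $\mathbb P(b)\times\mathbb P(b)$ lies in $b$, yielding exactly $[b]$ in $\mathbb P(T_lZ)$. The product structure of $f$ is essential throughout, as it ensures $f^{-1}(z_1,z_2)=F_1\times F_2$ factorizes and so $W$ admits the coordinate-wise description above; without it, $W$ could have more complicated connected components whose intersections with $E_Z$ would require separate analysis.
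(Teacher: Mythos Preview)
Your proof is correct and follows essentially the same strategy as the paper's: both identify $\mu_f^{-1}(s)$ as a connected component of the fibre $W=\tau_f^{-1}(b,z_1,z_2)$, use the product structure $f=(f_1,f_2)$ to write $W$ (once $\mathbb P(b)\subset Z$) as $(F_1\cap\mathbb P(b))\times(F_2\cap\mathbb P(b))\times\{b\}$, and then check that each connected component meets the diagonal $E_Z$ in a point or in $\mathbb P(b)$. The only difference is organizational: the paper first disposes of the case where $\mu_f^{-1}(s)$ is finite and then argues abstractly that an infinite connected component of a product of closed subsets of $\mathbb P^1$ must be $\mathbb P(b)\times\mathbb P(b)$, $\{l\}\times\mathbb P(b)$, or $\mathbb P(b)\times\{l\}$, whereas you run an explicit trichotomy on whether $\mathbb P(b)\subset F_i$; your discussion of strict transforms and tangent directions in the last case is correct but not needed, since the explicit embedding ${\rm Bl}_\Delta(Z\times Z)\subset Z\times Z\times{\rm Gr}_2(V)$ already shows $\mathbb P(b)\times\mathbb P(b)\times\{b\}$ sits inside the blow-up whenever $\mathbb P(b)\subset Z$.
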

\begin{proof}
Let $z$ denote an element in $\mathcal S_{f}$
and let $(b,x)$ denote the image of  $z$ in 
under the second morphism 
\begin{equation}
\label{Stein2}
 \mathcal B_{f} \rightarrow {\rm Gr}_2(V) \times \mathcal Z
\end{equation}
of the Stein factorization (\ref{Stein}). 
As $\mu_{E,f}^{-1}(z)  \subset \mu_f^{-1}(z)$
and $\mu_f^{-1}(z)$ is connected we may assume that 
$\mu_f^{-1}(z)$ is infinite. Consequently the intersection
$Z \cap {\mathbb P}(b)$ must also be infinite and thus 
equal to $ {\mathbb P}(b)$. It follows that  
$$ {\mathbb P}(b) \times  {\mathbb P}(b) \times
\{ b \} \subset {\rm Bl}_\Delta(Z \times Z).$$
This leads to  the inclusion
\begin{equation}
\label{inclusion}
\mu_f^{-1}(z) \subset \tau_f^{-1}(b,x) =  (f_1^{-1}(x_1) \cap {\mathbb P}(b)) 
\times (f_2^{-1}(x_2) \cap {\mathbb P}(b)) \times \{ b \},
\end{equation}
where we have used the notation $x=(x_1,x_2) \in \mathcal Z$, 
with $x_i \in \mathcal Z_i$ for $i=1,2$. 
As $\mu_f$ and $\tau_f$ only differ by a finite morphism
it follows 
 that $\tau_f^{-1}(b,x) $
is a disjoint union of $\mu_f^{-1}(z) $ with another 
closed (possibly empty) subset of $\tau_f^{-1}(b,x) $.
At the same time $\mu_f^{-1}(z) $ is connected 
and thus (\ref{inclusion}) implies that $\mu_f^{-1}(z) $ 
is of one of the forms
$$ {\mathbb P}(b)  \times {\mathbb P}(b) \times  \{ b \}, ~
\{ l \}   \times {\mathbb P}(b) \times 
\{ b \}, ~ 
{\mathbb P}(b)  \times \{ l \} \times 
\{ b \},$$
for some line $l$ contained in $b$. We conclude 
that $\mu_{E,f}^{-1}(z)$ is  either equal to
$$ {\mathbb P}(b) \times \{ b \} \subset E_Z,$$
or of the form
$$  \{ l \}  \times \{ b \} \subset E_Z.$$
In both cases $\mu_{E,f}^{-1}(z)$ is connected.

\end{proof}

\subsection{Proof of Theorem \ref{conjecture}}

We continue the notation of Section \ref{technical}.  The proof of
Theorem \ref{conjecture} is built from the following two results.

\begin{Proposition}
\label{prop1}
Assume that $E_Z$ admits a Frobenius splitting.
Let $\mathcal L$ (resp. $\mathcal M$) 
denote a very ample (resp. globally generated) 
line bundle on $Z$ and let $j > 0$ denote an 
integer. Then 
\begin{equation}
\label{tangent-van}
{\rm H}^i\big( Z , S^j \Omega^1_{Z} \otimes {\mathcal L}^{2j}
\otimes \mathcal M \big) = 0 ~, \text { for } i>0.
\end{equation}
\end{Proposition}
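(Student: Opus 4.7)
The plan is to reduce $H^i(Z, S^j\Omega^1_Z \otimes \mathcal{L}^{2j} \otimes \mathcal{M})$ to the cohomology of an ample line bundle on a Frobenius split projective scheme, and invoke the standard Frobenius splitting vanishing theorem. The first step uses the projective bundle $\pi_{E_Z}\colon E_Z \to Z$, where $E_Z \cong \mathbb{P}(\Omega^1_Z)$: by the projection formula together with $R^i(\pi_{E_Z})_*\mathcal{O}_{E_Z}(j) = 0$ for $i > 0$ and $j \geq 0$, we obtain
\begin{equation*}
H^i(Z, S^j\Omega^1_Z \otimes \mathcal{L}^{2j} \otimes \mathcal{M}) \;=\; H^i(E_Z, \mathcal{O}_{E_Z}(j) \otimes \pi_{E_Z}^*(\mathcal{L}^{2j} \otimes \mathcal{M})).
\end{equation*}
Restricting \eqref{iso-line} from $\mathrm{Bl}_\Delta(\mathbb{P}^N \times \mathbb{P}^N)$ to $\mathrm{Bl}_\Delta(Z \times Z)$ (using the embedding of $Z$ in $\mathbb{P}(V)$ determined by $\mathcal{L}$), and then further to $E_Z$, yields $\mathcal{O}_{E_Z}(1) \otimes \pi_{E_Z}^*\mathcal{L}^{2} = \tau_{E_Z}^*\mathcal{O}_{2,V}(1)$. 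It therefore suffices to show $H^i(E_Z, \tau_{E_Z}^*\mathcal{O}_{2,V}(j) \otimes \pi_{E_Z}^*\mathcal{M}) = 0$ for $i > 0$.

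Since $\mathcal{M}$ is globally generated, it defines a morphism $g\colon Z \to \mathcal{Z}_2 \subseteq \mathbb{P}(H^0(Z,\mathcal{M})^*)$ with $g^*\mathcal{O}(1)|_{\mathcal{Z}_2} = \mathcal{M}$. I then apply the setup of \S\ref{technical} with $f = (\mathrm{const},g)\colon Z \times Z \to \mathrm{pt} \times \mathcal{Z}_2 \cong \mathcal{Z}_2$. This $f$ is a product of morphisms, so Lemmas \ref{semi-trivial} and \ref{conn-fibres} apply to the induced map $\mu_{E,f}\colon E_Z \to \mathcal{S}_f$. Let $\mathcal{S}$ denote the target of the Stein factorization of $\mu_{E,f}$, so that $(\mu_{E,f})_*\mathcal{O}_{E_Z} = \mathcal{O}_{\mathcal{S}}$ while $R^i(\mu_{E,f})_*\mathcal{O}_{E_Z} = 0$ for $i > 0$, and the map $\tau_{E,f}\colon E_Z \to \mathrm{Gr}_2(V) \times \mathcal{Z}_2$ factors through a finite composite $\mathcal{S} \to \mathcal{S}_f \to \mathrm{Gr}_2(V) \times \mathcal{Z}_2$. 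Leray together with these vanishings delivers
\begin{equation*}
H^i(E_Z, \tau_{E_Z}^*\mathcal{O}_{2,V}(j) \otimes \pi_{E_Z}^*\mathcal{M}) \;=\; H^i(\mathcal{S}, \mathcal{A}),
\end{equation*}
where $\mathcal{A}$ is the pullback to $\mathcal{S}$ of the ample line bundle $\mathcal{O}_{2,V}(j) \boxtimes \mathcal{O}(1)|_{\mathcal{Z}_2}$ on $\mathrm{Gr}_2(V) \times \mathcal{Z}_2$.

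Finiteness of $\mathcal{S} \to \mathrm{Gr}_2(V) \times \mathcal{Z}_2$ makes $\mathcal{A}$ ample. Since $E_Z$ is Frobenius split by hypothesis and $(\mu_{E,f})_*\mathcal{O}_{E_Z} = \mathcal{O}_{\mathcal{S}}$, the splitting pushes forward to a Frobenius splitting of $\mathcal{S}$. The fundamental vanishing theorem for ample line bundles on Frobenius split projective schemes then gives $H^i(\mathcal{S}, \mathcal{A}) = 0$ for $i > 0$, finishing the proof. The main delicate point is the cofactoring of $\tau_{E,f}$ through $\mu_{E,f}$ followed by a finite morphism; this is precisely what upgrades the semi-ample line bundle $\tau_{E_Z}^*\mathcal{O}_{2,V}(j) \otimes \pi_{E_Z}^*\mathcal{M}$ on $E_Z$ to a genuinely ample line bundle on a Frobenius split base, where the vanishing theorem applies directly.
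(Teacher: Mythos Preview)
Your argument is correct and is essentially the paper's own proof: embed $Z$ via $\mathcal L$, pass to $E_Z$ using the projective bundle identification $(\pi_{E_Z})_*\mathcal O_{E_Z}(j)=S^j\Omega^1_Z$, rewrite the line bundle as a pull-back from $\mathrm{Gr}_2(V)\times\mathcal Z$ via \eqref{iso-line}, take the Stein factorization of $\mu_{E,f}$, and apply Frobenius-split vanishing on the finite-over-ample target. The only cosmetic differences are that the paper takes $f$ to factor through the \emph{first} projection (you use the second), and that your citation of Lemma~\ref{conn-fibres} is superfluous here: once you pass to the Stein factorization $\mathcal S$ of $\mu_{E,f}$, the identity $(\text{Stein map})_*\mathcal O_{E_Z}=\mathcal O_{\mathcal S}$ is automatic, and only Lemma~\ref{semi-trivial} is needed for the higher direct image vanishing (the paper likewise uses only Lemma~\ref{semi-trivial} at this point).
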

\begin{proof}
 
We assume that the embedding $Z \subset {\mathbb P}^N$
is defined by the very ample line bundle $\mathcal L$, and 
that the map $f$, of Section \ref{technical}, is the composition 
$$f : Z \times Z \rightarrow Z \rightarrow \mathcal Z := 
{\mathbb P}\big({\rm H}^0(\mathcal M)^\vee \big),$$
where the first map is projection on the first coordinate 
while the second map is the projective morphism defined
by the globally generated line bundle $\mathcal M$.
Let $\mathcal O_{\mathcal M}(1)$
denote the ample generator of the Picard group of 
${\mathbb P}\big({\rm H}^0(\mathcal M)^\vee\big)$.
By (\ref{iso-line}) the pull-back of $\mathcal O_{2,V}(j)
\boxtimes \mathcal O_{\mathcal M}(1)$ by 
$$ \tau_{E,f} : E_Z \rightarrow {\rm Gr}_2(V) \times \mathcal Z,$$
is then the line bundle
$$ \mathcal L_j = \mathcal O(-j E_Z)_{| E_Z} \otimes \pi_{E_Z}^* \big( \mathcal L^{2j} \otimes \mathcal M\big) ,$$ 
on $E_Z$. Consider the Stein factorization 
$$ E_Z \xrightarrow{\tilde \mu_{E,f}} \tilde {\mathcal S_{f}} \rightarrow \mathcal S_{f},$$
of $\mu_{E,f}$. By Lemma 
\ref{semi-trivial} and the definition of the Stein factorization,
the map $ \tilde \mu_{E,f}$ is a rational morphism, i.e. 
$$
R^i (\tilde \mu_{E,f})_* \mathcal O_{E_Z} =
\begin{cases}
\mathcal O_{\tilde {\mathcal S_{f}}} & \text{ if } i=0, \\
0 & \text{ if } i>0.
\end{cases}
$$
Moreover, the pull back $\tilde{ \mathcal L}_j$ of 
$\mathcal O_{2,V}(j) \boxtimes \mathcal O_{\mathcal M}(1)$ 
 by the finite morphism
\begin{equation}
\label{finite map}
\tilde {\mathcal S_{f}} \rightarrow \mathcal S_{f}
\rightarrow \mathcal B_{f} \rightarrow {\rm Gr}_2(V) 
\times \mathcal Z,
\end{equation}
is an ample line bundle  on $\tilde{\mathcal  S_{f}}$ whose pull back by 
$\tilde \mu_{E,f}$ coincides with $\mathcal L_j$. 
As  $\tilde {\mathcal S_{f}}$ is Frobenius split (by push-down
of the Frobenius splitting on $E_Z$ \cite[Lemma 1.1.8]{BrionKumar2005} ) it follows that 
the higher cohomology of  $\tilde{ \mathcal L}_j$,
and hence of  ${ \mathcal L}_j$, is trivial \cite[Thm.1.2.8]{BrionKumar2005}.
Notice finally that by \cite[Ex. III.8.4]{Hartshorne} the cohomology 
of ${\mathcal L}_j$ and the direct image
$$ (\pi_{E_Z})_* {\mathcal L}_j =
 S^j \Omega^1_{Z} \otimes {\mathcal L}^{2j}
\otimes \mathcal M,$$
coincide. Here we use that the identification 
$ (\pi_{E_Z})_* \mathcal O_{E_Z}(-j E_Z) = S^j \Omega_Z^1$.
This ends the proof.  
\end{proof}

\begin{Proposition}
\label{prop2} 
Assume that the blow-up ${\rm Bl}_{\Delta}(Z \times Z)$ admits 
a Frobenius splitting which is compatible  with $E_Z$. 
Let $\mathcal L$ denote a very ample line bundle on $Z$ 
and let $\mathcal M_1$ and $\mathcal M_2$ denote globally 
generated line bundles on $Z$. Let $j>0$ denote an integer. 
Then
\begin{equation}
\label{Bl-van2}
{\rm H}^i\big(Z \times Z, \mathcal I_\Delta^{j+1} \otimes
((\cL^j\otimes\cM_1) \boxtimes (\cL^j \otimes 
\mathcal M_2 ))\big)  = 0,  \text{ for } i >0,
\end{equation}
where $\mathcal I_\Delta$ denotes the sheaf
of ideals defining the diagonal in $Z \times Z$.
\end{Proposition}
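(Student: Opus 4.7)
The plan mirrors the proof of Proposition \ref{prop1}, now on the blow-up $\Bl := \Bl_\Delta(Z \times Z)$ in place of $E_Z$ and using the hypothesis that $\Bl$ itself is Frobenius split compatibly with $E_Z$. By the standard identities $(\pi_Z)_*\mathcal O_{\Bl}(-(j+1)E_Z) = \mathcal I_\Delta^{j+1}$ and $R^i(\pi_Z)_*\mathcal O_{\Bl}(-(j+1)E_Z) = 0$ for $i > 0$, the desired vanishing (\ref{Bl-van2}) translates into the vanishing of $H^i(\Bl, \mathcal N)$ for $i > 0$, where $\mathcal N := \mathcal O_{\Bl}(-(j+1)E_Z) \otimes \pi_Z^*\bigl((\mathcal L^j \otimes \mathcal M_1) \boxtimes (\mathcal L^j \otimes \mathcal M_2)\bigr)$. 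Assume $Z \subset \mathbb P^N = \mathbb P(V)$ is the embedding given by $\mathcal L$ and let $f = (f_1, f_2) : Z \times Z \to \mathcal Z := \mathcal Z_1 \times \mathcal Z_2$ be the product of the morphisms $f_i : Z \to \mathcal Z_i := \mathbb P(H^0(\mathcal M_i)^\vee)$ induced by $\mathcal M_i$. Iterating (\ref{iso-line}) $j$ times gives
$$\mathcal N = \mathcal O_{\Bl}(-E_Z) \otimes \mathcal A, \qquad \mathcal A := \tau_f^*\bigl(\mathcal O_{2,V}(j) \boxtimes \mathcal O_{\mathcal M_1}(1) \boxtimes \mathcal O_{\mathcal M_2}(1)\bigr),$$
so that $\mathcal A$ is the $\tau_f$-pullback of an ample line bundle on ${\rm Gr}_2(V) \times \mathcal Z$.

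The short exact sequence $0 \to \mathcal N \to \mathcal A \to \mathcal A|_{E_Z} \to 0$ reduces the problem to three claims: (a) $H^i(E_Z, \mathcal A|_{E_Z}) = 0$ for $i > 0$, (b) $H^i(\Bl, \mathcal A) = 0$ for $i > 0$, and (c) the restriction $H^0(\Bl, \mathcal A) \twoheadrightarrow H^0(E_Z, \mathcal A|_{E_Z})$ is surjective. Claim (a) follows from Proposition \ref{prop1}: identify $\mathcal A|_{E_Z} = \mathcal O_{E_Z}(-jE_Z) \otimes \pi_{E_Z}^*(\mathcal L^{2j} \otimes \mathcal M_1 \otimes \mathcal M_2)$; the projective-bundle formula $(\pi_{E_Z})_*\mathcal O_{E_Z}(-jE_Z) = S^j\Omega^1_Z$ (with vanishing higher direct images) reduces the cohomology to $H^i(Z, S^j\Omega^1_Z \otimes \mathcal L^{2j} \otimes \mathcal M_1 \otimes \mathcal M_2)$, which vanishes for $i > 0$ by Proposition \ref{prop1} with the globally generated line bundle $\mathcal M := \mathcal M_1 \otimes \mathcal M_2$ (using that $E_Z$ inherits a Frobenius splitting from the compatibility hypothesis).

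For (b) and (c), adapt the Stein-factorization argument of Proposition \ref{prop1} to the whole blow-up. Let $\mu_f : \Bl \to \mathcal B_f$ be the first map of the Stein factorization (\ref{Stein}) of $\tau_f$; since $\Bl$ is smooth, $\mathcal B_f$ is normal. The Frobenius splitting of $\Bl$ pushes down to one of $\mathcal B_f$, compatibly splitting $\mathcal S_f := \mu_f(E_Z)$, and $\mathcal A = \mu_f^*\tilde{\mathcal A}$ for an ample line bundle $\tilde{\mathcal A}$ on $\mathcal B_f$ (pullback of ample by the finite map $\mathcal B_f \to {\rm Gr}_2(V) \times \mathcal Z$). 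Granting that $\mu_f$ is a rational morphism, i.e., $R^i(\mu_f)_*\mathcal O_{\Bl} = 0$ for $i > 0$, the Leray spectral sequence gives $H^i(\Bl, \mathcal A) = H^i(\mathcal B_f, \tilde{\mathcal A}) = 0$ for $i > 0$ (Frobenius splitting plus ampleness), yielding (b); similarly (c) follows from compatibility via the surjection $H^0(\mathcal B_f, \tilde{\mathcal A}) \twoheadrightarrow H^0(\mathcal S_f, \tilde{\mathcal A}|_{\mathcal S_f})$, combined with the identification of the latter with $H^0(E_Z, \mathcal A|_{E_Z})$ afforded by the rational Stein factor $\tilde{\mathcal S}_f$ of $\mu_{E,f}$ from Proposition \ref{prop1}. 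The main obstacle is proving $R^i(\mu_f)_*\mathcal O_{\Bl} = 0$ for $i > 0$, the blow-up analog of Lemma \ref{semi-trivial}. The strategy mimics the proof of that lemma: over affine opens $U \subset {\rm Gr}_2(V)$ where the universal blow-up $\tau : \Bl_\Delta(\mathbb P^N \times \mathbb P^N) \to {\rm Gr}_2(V)$ is a trivial $\mathbb P^1 \times \mathbb P^1$-bundle, embed $\tau_Z^{-1}(U)$ via the graph of $f \circ \pi_Z$ into $Y := \mathbb P^1 \times \mathbb P^1 \times U \times \mathcal Z$ and chase the long exact sequence for $0 \to \mathcal I \to \mathcal O_Y \to \iota_*\mathcal O_{\tau_Z^{-1}(U)} \to 0$, exploiting the K\"unneth vanishing $R^k p_*\mathcal O_Y = 0$ for $k > 0$ together with the dimension bound $R^k p_* = 0$ for $k \geq 3$, so that the relevant higher direct images of $\iota_*\mathcal O_{\tau_Z^{-1}(U)}$ are forced to vanish.
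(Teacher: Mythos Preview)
Your reduction to claims (a), (b), (c) is exactly the paper's structure, and your arguments for (a) and (c) are essentially those of the paper. The real issue is (b).

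The sketched proof that $R^i(\mu_f)_*\mathcal O_{\Bl}=0$ for $i>0$ does not work. In Lemma~\ref{semi-trivial} the ambient fibres are $\mathbb P^1$, so the dimension bound kills $R^k p_*$ for $k\geq 2$; combined with $R^1 p_*\mathcal O_Y=0$ the long exact sequence gives $R^1 p_*\iota_*\mathcal O\hookrightarrow R^2 p_*\mathcal I=0$. In your situation the ambient fibres are $\mathbb P^1\times\mathbb P^1$, so the dimension bound only gives $R^k p_*=0$ for $k\geq 3$. The long exact sequence then yields
\[
R^1 p_*\iota_*\mathcal O_{\tau_Z^{-1}(U)}\;\cong\; R^2 p_*\mathcal I,
\]
and there is no reason for $R^2 p_*\mathcal I$ to vanish: for a closed subscheme of $\mathbb P^1\times\mathbb P^1$ the ideal sheaf can easily have nonzero $H^2$ (e.g.\ a divisor of type $(a,b)$ with $a,b\geq 2$). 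So the analogue of Lemma~\ref{semi-trivial} for $\mu_f$ is not established by your argument, and hence neither is (b).

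The paper avoids this obstacle entirely. Instead of proving that $\mu_f$ is rational, it uses the compatibly split divisor $E_Z$ and \cite[Lemma~1.4.11]{BrionKumar2005} to obtain an inclusion
\[
H^i\bigl(\Bl,\mathcal A\bigr)\;\subset\; H^i\bigl(\Bl,\mathcal A^{\,p}\otimes\mathcal O((p-1)E_Z)\bigr),
\]
and then checks directly via (\ref{iso-line}) that $\mathcal A^{\,p}\otimes\mathcal O((p-1)E_Z)$ is the restriction of an ample line bundle on $Z\times Z\times{\rm Gr}_2(V)$, so its higher cohomology vanishes on the Frobenius split variety $\Bl$. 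This trick uses the compatibility with $E_Z$ a second time (beyond its role in (c)) and replaces the missing vanishing of $R^i(\mu_f)_*\mathcal O_{\Bl}$.
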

\begin{proof}
We will assume that $\mathcal L$ is the line bundle
defining the embedding $Z \subset {\mathbb P}^N$,
and that 
$$ f_i : Z \rightarrow \mathcal Z_i := {\mathbb P}^0
\big( {\rm H}^0(\mathcal M_i)^\vee \big), ~i=1,2,$$
are the maps defined by the globally generated line
bundles $\mathcal M_1$ and $\mathcal M_2$ . 
Let $\mathcal L_j$ denote the line bundle 
$$ \mathcal L_j = \mathcal O(-j E_Z) \otimes \pi_Z^* 
\big( (\mathcal L^{j} \otimes \mathcal M_1)
\boxtimes (\mathcal L^{j} \otimes \mathcal M_2 )
\big),$$
on ${\rm Bl}_\Delta(Z \times Z)$. 
We claim that the restriction morphism
\begin{equation}
\label{gaussian}
 {\rm H}^0 \big( {\rm Bl}_\Delta(Z \times Z), \mathcal L_j
\big) \rightarrow {\rm H}^0 \big( E_Z, 
\mathcal L_j \big),
\end{equation}
is surjective. To see this 
let $\mathcal O_i(1)$, for $i=1,2$, 
denote the ample generator of the Picard group
of $\mathcal Z_i$. Consider the ample line bundle
$$\tilde{M}_j = \mathcal O_{2,V}(j) \boxtimes 
\mathcal O_1(1) \boxtimes \mathcal O_2(1),$$
on ${\rm Gr}_2(V) \times \mathcal Z$
and let $\tilde{\mathcal L}_j$ denote the ample
pull back of $\tilde{\mathcal M}_j$ to 
$\mathcal B_{f}$ by the finite morphism in 
(\ref{Stein}). Then by  (\ref{iso-line})
the line bundle  $\mathcal L_j$ is the pull 
back of $\tilde{\mathcal L}_j$ by $\mu_f$.
In particular, as $\mu_f$ is part of a Stein
factorization we obtain an identification 
$${\rm H}^0 \big( {\rm Bl}_\Delta(Z \times Z), \mathcal L_j
\big) = {\rm H}^0 \big( \mathcal B_{f}, \tilde{\mathcal L}_j
\big) .$$
Assume, for a moment, that $Z$ is not of the 
form ${\mathbb P}(V')$ as in the assumptions
of Lemma \ref{birational}.  Then $\mu_{E,f}$ 
is a birational morphism with connected fibres
by Lemma \ref{birational} and Lemma \ref{conn-fibres}.
Moreover, by push-forward   of the Frobenius splitting 
on ${\rm Bl}_{\Delta}(Z \times Z)$ we know that 
$ \mathcal B_{f}$ is Frobenius split compatibly 
with $\mathcal S_{f}$  \cite[Lemma 1.1.8]{BrionKumar2005}. 
Thus by \cite[Ex. 1.2.E(3)]{BrionKumar2005} the 
variety $\mathcal S_{f}$ is normal, and hence 
\begin{equation}
\label{identity}
  {\rm H}^0 \big( E_Z , \mathcal L_j
\big) = {\rm H}^0 \big(\mathcal  S_{f}, \tilde{\mathcal L}_j
\big) ,
\end{equation}
by Zariski's main theorem.
Thus to prove (\ref{gaussian}) it suffices to prove that the restriction
map
$$ {\rm H}^0 \big( \mathcal B_{f}, \tilde{\mathcal L}_j
\big) \rightarrow
{\rm H}^0 \big( \mathcal S_{f}, \tilde{\mathcal L}_j
\big), 
$$
is surjective. As $\mathcal S_{f}$ is compatibly Frobenius 
split in $\mathcal B_{f}$ and as $\tilde{\mathcal L}_j$
is ample the latter follows by general theory of Frobenius 
splitting \cite[Thm.1.2.8]{BrionKumar2005} . Consider next the case $Z={\mathbb P}(V')$. If 
either $\mathcal M_1$ or $\mathcal M_2$ are ample 
then $\mu_{E,f}$ is easily seen to be an isomorphism
and we may argue as above. This leaves us with the 
case $\mathcal M_1 = \mathcal M_2 = \mathcal O_Z$.
Then $\mathcal Z$ is just a 1-point space and thus 
$\mathcal S_{f} = {\rm Gr}_2(V')$ while $\mu_{E,f}$
coincides with $\tau_{E_Z}$ which is a $\mathbb P^1$-bundle
over ${\rm Gr}_2(V')$. So again we obtain the identification
(\ref{identity}). This proves the claim about the surjectivity 
of (\ref{gaussian}). 

As the blow-up map satisfies
$$ R^i(\pi_{Z})_* \mathcal O(-j E_Z)
= 
\begin{cases}
(\mathcal I_\Delta)^j & \text{ if } i=0,\\
0 & \text{ if } i>0,
\end{cases}
$$
we may reformulate the statement (\ref{Bl-van2})
as 
$$ {\rm H}^i\big({\rm Bl}_\Delta( Z \times Z), 
\mathcal L_j \otimes \mathcal O(-E_Z) \big)  = 0,
\text{ for } i>0.$$
To prove the latter  we consider the short exact sequence
$$ 0 \rightarrow \mathcal O(-E_Z) \rightarrow 
\mathcal O_{{\rm Bl}_\Delta(Z \times Z)} 
\rightarrow \mathcal O_{E_Z} \rightarrow 0,$$
and apply Proposition \ref{prop1}. It 
follows that it suffices to prove
$$ {\rm H}^i\big({\rm Bl}_\Delta( Z \times Z), 
\mathcal L_j \big)  = 0,
\text{ for } i>0.$$
As $E_Z$
is compatibly Frobenius split divisor  in ${\rm Bl}_\Delta(Z \times Z)$
we have by \cite[Lemma 1.4.11]{BrionKumar2005}  an inclusion (of abelian groups)
$$ {\rm H}^i \big({\rm Bl}_\Delta(Z \times Z),
\mathcal L_j \big) \subset  {\rm H}^i 
\big({\rm Bl}_\Delta(Z \times Z), \mathcal L_j^p 
\otimes \mathcal O((p-1) E_Z) \big).$$
Thus, as ${\rm Bl}_\Delta(Z \times Z)$ is 
Frobenius split, it suffices to show
that the line bundle 
$$  \mathcal L_j^p \otimes \mathcal O((p-1) E_Z)
= \mathcal O((p(1-j)-1) E_Z) \otimes \pi_Z^* 
\big( (\mathcal L^{pj} \otimes \mathcal M_1^p)
\boxtimes (\mathcal L^{pj} \otimes \mathcal M_2^p )
\big),
$$
is ample on ${\rm Bl}_\Delta(Z \times Z)$. But 
the latter line bundle is by  (\ref{iso-line})
isomorphic to the restriction to ${\rm Bl}_\Delta(Z \times Z)$
of the line
bundle
\begin{equation}
\label{ample}
\big(\mathcal M_1^p \otimes \mathcal L^{(p-1)} \big)
\boxtimes \big(\mathcal M_2^p \otimes \mathcal L^{(p-1)} \big)
\boxtimes \mathcal O_{2,V}\big(p(j-1)+1 \big),
\end{equation}
on  $Z \times Z \times {\rm Gr}_2(V)$. Here $\mathcal O_{2,V}\big(1\big)$ denotes the ample 
generator of the Picard group of ${\rm Gr}_2(V)$.
As the line bundle (\ref{ample}) is 
ample this ends  the proof.
\end{proof}

Theorem \ref{conjecture} is now a direct consequence of 
Proposition \ref{prop2}.

\bibliographystyle{amsplain}

\providecommand{\bysame}{\leavevmode\hbox to3em{\hrulefill}\thinspace}
\providecommand{\MR}{\relax\ifhmode\unskip\space\fi MR }
\providecommand{\MRhref}[2]{%
  \href{http://www.ams.org/mathscinet-getitem?mr=#1}{#2}
}
\providecommand{\href}[2]{#2}

\end{document}